\newcommand\blfootnote[1]{%
  \begingroup
  \renewcommand\thefootnote{}\footnote{#1}%
  \addtocounter{footnote}{-1}%
  \endgroup
}
\numberwithin{equation}{section}
\theoremstyle{plain}
\newtheorem{theorem}{Theorem}[section]
\newtheorem{proposition}[theorem]{Proposition}
\newtheorem*{SEP}{(SEP)}
\newtheorem*{Classical SEP}{(Classical SEP)}
\newtheorem{lemma}[theorem]{Lemma}
\newtheorem{remark}[theorem]{Remark}
\newtheorem{definition}[theorem]{Definition}
\newtheorem{example}[theorem]{Example}
\newtheorem{assumption}[theorem]{Assumption}
\newcommand{\dd}{\,\mathrm{d}}
\newcommand{\E}{\mathbb{E}}
\newcommand{\R}{\mathbb{R}}
\newcommand{\thor}{1}
\renewcommand{\d}{\mathrm{d}}
\renewcommand{\P}{\mathbb{P}}
\title{On Skorokhod Embeddings and Poisson Equations}
\author[D\"oring]{Leif D\"oring}
\address{Leif D\"oring, University of Mannheim, Germany}
\email{doering@uni-mannheim.de}
\author[Gonon]{Lukas Gonon}
\address{Lukas Gonon, Universit\"at Sankt Gallen, Switzerland}
\email{lukas.gonon@unisg.ch}
\author[Pr\"omel]{David J. Pr\"omel}
\address{David J. Pr\"omel, University of Oxford, United Kingdom}
\email{proemel@maths.ox.ac.uk}
\author[Reichmann]{Oleg Reichmann}
\address{Oleg Reichmann, European Investment Bank, Luxembourg}
\email{o.reichmann@eib.org}
\date{\today}
\begin{document}

\begin{abstract} 
  The classical Skorokhod embedding problem for a Brownian motion~$W$ asks to find a stopping time~$\tau$ so that $W_\tau$ is distributed according to a prescribed probability distribution~$\mu$. Many solutions have been proposed during the past 50 years and applications in different fields emerged. This article deals with a generalized Skorokhod embedding problem (SEP): Let $X$ be a Markov process with initial marginal distribution $\mu_0$ and let $\mu_1$ be a probability measure. The task is to find a stopping time~$\tau$ such that $X_\tau$ is distributed according to $\mu_1$. More precisely, we study the question of deciding if a finite mean solution to the SEP can exist for given $\mu_0, \mu_1$ and the task of giving a solution which is as explicit as possible.\\
  If $\mu_0$ and $\mu_1$ have positive densities $h_0$ and $h_1$ and the generator $\mathcal A$ of $X$ has a formal adjoint operator $\mathcal A^*$, then we propose necessary and sufficient conditions for the existence of an embedding in terms of the Poisson equation $\mathcal A^* H=h_1-h_0$  and give a fairly explicit construction of the stopping time using the solution of the Poisson equation. For the class of L\'evy processes we carry out the procedure and extend a result of Bertoin and Le Jan to L\'evy processes without local times.
\end{abstract}

\maketitle
\frenchspacing\blfootnote{The views expressed in this article are those of the authors and not necessarily of the European Investment Bank.}

\noindent\textbf{Key words and phrases:} Fokker-Planck equation, L\'evy process, Markov process, Skorokhod embedding problem, random time-change.\\
\textbf{MSC 2010 Classification:} 60G40, 60J75.



\section{Introduction and Main Results}

The Skorokhod embedding problem was originally formulated and solved by Skorokhod~\cite{Skorokhod1961,Skorokhod1965} for a one-dimensional Brownian motion~$W$ started from~$0$ and a given probability measure~$\mu$:
\begin{Classical SEP}
  $\text{Find a stopping time } \tau\text{ such that } W_\tau \sim \mu \text{ and } \mathbb{E}[\tau] < \infty. $
\end{Classical SEP}
The additional requirement on $\tau$ to satisfy $\E[\tau]<\infty$ is commonly posed to exclude non-meaningful solutions. As observed by Doob (see \cite[Remark~51.7]{Rogers2000v2}) without this condition a trivial solution would be $\tau=\inf\{ t \geq 2: B_t=F_\mu^{-1}(\Phi(B_1))\}$, where $\Phi$ is the distribution function of a standard normal variable and $F^{-1}_\mu$ is the right-inverse of the distribution function $F_\mu$ of $\mu$. There is a great ongoing effort to obtain solutions with different properties to the Skorokhod embedding problem in different generalizations. For a survey paper on classical results we refer to~\cite{Obloj2004} and references therein.\smallskip

Recent motivation to deal with various versions of the classical Skorokhod embedding problem stems from its applications in mathematical finance starting with the seminal work of Hobson~\cite{Hobson1998}, where model-independent pricing bounds and hedging techniques for lookback options were studied by means of Skorokhod embedding. The link between robust financial mathematics and the classical SEP was utilized by many authors to determine robust price bounds for exotic options, see \cite{Hobson2011} for a more detailed introduction to this area. More recently, additional interest in the Skorokhod embedding problem was also caused by new applications in game theory (e.g. \cite{Seel2013,Feng2016}) and in numerical analysis (e.g. \cite{Gassiat2015,Ankirchner2016}).\smallskip

There are two direct extensions of the Skorokhod embedding problem: generalizing the process and generalizing the deterministic initial condition $\delta_0$ to an arbitrary distribution $\mu_0$. A natural motivation for the latter is the interest of constructing multi-marginal Skorokhod embeddings.\smallskip

The version of the Skorokhod embedding problem we deal with allows for a general process and a general initial distribution. Let $\mu_0$ and $\mu_1$ be two given probability distributions. On a complete probability space $(\Omega,\mathcal{F},\P^{\mu_0})$ we consider a stochastic process $L$ with $L_0 \sim \mu_0$ under $\P^{\mu_0}$ and denote by $(\mathcal F_t)_{t\geq 0}$ the $\P^{\mu_0}$-augmented natural filtration of $L$. This setting leads to the following formulation of the Skorokhod embedding problem:
\begin{SEP}
  Find an $(\mathcal{F}_t)_{t \geq 0}$-stopping time $\tau$ such that $ L_\tau \sim \mu_1$ and $\mathbb{E}^{\mu_0}[\tau] < \infty.$ 
\end{SEP}
The first natural question is under which conditions an embedding $\tau$ exists. For a Brownian motion starting from an initial law~$\mu_0$ with finite second moment this is a classical result: there is a finite mean embedding for $\mu_1$ if and only if $\mu_0$ and $\mu_1$ have the same first moment, finite second moments and $\mu_0$ is smaller than $\mu_1$ in convex order, i.e.
\begin{align*}
  \int_\mathbb{R} \varphi(x)\, \mu_0(\d x) \leq \int_\mathbb{R} \varphi(x)\, \mu_1(\d x) \quad\text{ for all } \varphi \text{ convex. }  
\end{align*}
Sufficiency follows e.g. by \cite{baxter1974}, necessity by the optional sampling theorem and Jensen's inequality.
To give the right generalization of this property for more general Markov processes is the main purpose of this article. Using general Markov process theory we find an abstract formulation in terms of Poisson equations which becomes explicit for L\'evy processes but we believe to hold much more generally.\smallskip

Recall that a continuous-time process $(L_t)_{t\geq0}$ with values in $\mathbb{R}$ is called L\'evy process if it has almost surely RCLL sample paths, is almost surely issued from $0$, is stochastically continuous and has stationary and independent increments. Due to the L\'evy-Khintchine representation, there exist $\alpha \geq 0$, $\gamma \in \mathbb{R}$ and a measure $\nu$ on $\mathbb{R}$ with $\nu(\{0\}) = 0$ and $\int_\mathbb{R} (x^2 \wedge 1)\, \nu (\d x) < \infty$ such that
\begin{equation}\label{eq:charFctLevy}
  \mathbb{E}[e^{i u L_t}] = e^{t \eta(u)}, \quad u \in \mathbb{R},\, t \geq 0,
\end{equation}
with the characteristic exponent
\begin{equation}\label{eq:symbol} 
  \eta(u) = - \frac{1}{2}\alpha^2 u^2 + i u \gamma + \int_{\mathbb{R}\setminus \{0\}} (e^{i u y} - 1 - i u y \mathbbm{1}_{\{|y|\leq 1\}} )\,\nu(\d y),  \quad u \in \mathbb{R}.
\end{equation}
The triplet $(\alpha^2,\gamma,\nu)$ is called L\'evy triplet and fully characterizes $L$. We exclude the trivial case of a constant L\'evy process, i.e. $\alpha=\gamma=\nu=0$. For more background information we refer for instance to the introductory texts of Bertoin~\cite{Bertoin1996} and Kyprianou~\cite{Kyprianou2014}. 
A L\'evy process with initial distribution $\mu_0$ is defined as $ L=\tilde L+X_0$, where $X_0\sim \mu_0$ is independent from the L\'evy process $\tilde L$. Throughout the article, $L$ will be a L\'evy process with initial distribution $\mu_0$ under $\P^{\mu_0}$. For the special case $\mu_0=\delta_0$ we always abbreviate $\P=\P^{\delta_0}$.\smallskip


To the best of our knowledge there is only one article that deals with sufficient \textit{and} necessary conditions for the existence of finite mean Skorokhod embeddings for L\'evy processes; for the particular case $\mu_0=\delta_0$. For symmetric and recurrent L\'evy processes that possess jointly continuous local times (e.g. $\alpha$-stable processes with $\alpha\in (1,2]$), Bertoin and Le Jan~\cite{Bertoin1992} give the following necessary and sufficient condition for the Skorokhod embedding problem: If $\widehat{\mu_1}$ denotes the Fourier transform of the measure $\mu_1$, then the necessary and sufficient condition for the existence of a finite mean Skorokhod embedding is
\begin{align}\label{BJ}
  \frac{\widehat{\mu_1} - 1}{\eta} \in L^1(\R), \quad H\geq 0 \quad \text{and}\quad H\in L^1(\R),
\end{align}
where
\begin{align}\label{bb}
  H(x):= \frac{1}{2 \pi} \int_\mathbb{R} \frac{\widehat{\mu_1}(\xi)-1}{\eta(\xi)}e^{-i x \xi} \dd \xi,\quad x \in \mathbb{R}.
\end{align}

We should mention that the results of~\cite{Bertoin1992} hold not only for L\'evy processes but also for Hunt processes with local times. Since their proofs were based on excursion theory, we had to develop a completely different approach to deal with processes that do not have local times (e.g. the Cauchy process).\smallskip
  
The main result of the present article shows that the obvious generalization of~\eqref{BJ} and~\eqref{bb} to non-trivial $\mu_0$ (i.e. replacing $1$ by $\widehat \mu_0$) is the necessary and sufficient condition also for a wide class of measures and L\'evy processes without local times. Allowing the L\'evy process to be more general forces us on the other hand to assume a priori regularity on $\mu_0, \mu_1$. We will always assume that $\mu_0, \mu_1$ have positive densities with respect to the Lebesgue measure. Additional smoothness will be imposed (e.g. $h_0, h_1\in C_0(\R)$ for the Brownian motion). Assumptions on the densities are different for different L\'evy processes; we state the precise assumptions in Section~\ref{sec:smoothness} below.
 
\begin{theorem}\label{thm:main}
  Suppose $L$ is a L\'evy process with initial distribution $\mu_0$ and characteristic exponent $\eta$. Suppose $\mu_0, \mu_1$ have strictly positive densities $h_0, h_1$ which are ``sufficiently smooth'' (specified below in Assumption~\ref{ass:regularity}). 
  \begin{itemize}
    \item[(i)] The necessary and sufficient condition for the existence of a finite mean Skorokhod embedding is 
      \begin{align}\label{BJ2}
        \frac{\widehat{\mu_1} - \widehat{\mu_0}}{\eta} \in L^1(\R), \quad H\geq 0 \quad \text{and}\quad H\in L^1(\R),
      \end{align}
      where
      \begin{align}\label{b}
        H(x):= \frac{1}{2 \pi} \int_\mathbb{R} \frac{\widehat{\mu_1}(\xi)-\widehat{\mu_0}(\xi)}{\eta(\xi)}e^{-i x \xi} \dd \xi,\quad x \in \mathbb{R}.
      \end{align}
    \item[(ii)] If \eqref{BJ2} is satisfied, then an explicit solution under $\P^{\mu_0}$ is given as follows:  
    \begin{equation*}
      \tau := \inf\bigg\lbrace t \in [0,\rho) \, :\, \int_0^t e^{-G(r)} \frac{h_1(L_r)}{H(L_r)} \dd r  \geq 1 \bigg\rbrace \wedge \rho,
    \end{equation*}
    where, for $t\geq 0$,
    \begin{equation*}
      \rho :=  \inf\{ t \in [0,\infty) \, : \, H(L_t) = 0 \}\quad\text{and}\quad \text  G(t)  := \int_0^t \frac{h_1(L_r)-h_0(L_r)}{H(L_r)} \dd r
    \end{equation*}
    with the usual convention $\inf \emptyset := \infty$. 
    \item[(iii)] With $\tau$ from (ii) it holds that $\E^{\mu_0}[\tau]=\int_\mathbb{R} H(x) \dd x$. 
  \end{itemize}
\end{theorem}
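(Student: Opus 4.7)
The plan is to identify the function $H$ defined by \eqref{b} as the unique (in a suitable class) solution of the Poisson equation $\mathcal{A}^* H = h_1 - h_0$, where $\mathcal{A}$ is the generator of $L$. Since the Fourier symbol of $\mathcal{A}$ is $\eta$ and of $\mathcal{A}^*$ is its complex conjugate, Fourier inversion of $(\widehat{\mu_1}-\widehat{\mu_0})/\eta$ gives precisely this solution; the first integrability condition in \eqref{BJ2} ensures $H$ is a bounded continuous function via Riemann-Lebesgue, and Assumption~\ref{ass:regularity} upgrades the identity $\mathcal{A}^* H = h_1 - h_0$ to a pointwise statement that can be coupled with Dynkin's formula.

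For necessity in (i), I would take an arbitrary finite-mean embedding $\tau$ and consider its expected occupation measure $\mu(A) := \E^{\mu_0}[\int_0^\tau \mathbbm{1}_{L_r \in A}\, \d r]$, which is a finite measure of total mass $\E^{\mu_0}[\tau]$. Applying Dynkin's martingale formula with test functions $f \in C_c^\infty(\R)$ together with optional stopping (justified because $\E^{\mu_0}[\tau] < \infty$ and $\mathcal{A} f$ is bounded), one obtains
\[
\int f\cdot(h_1 - h_0)\,\d x \;=\; \E^{\mu_0}\!\left[f(L_\tau)-f(L_0)\right] \;=\; \E^{\mu_0}\!\left[\int_0^\tau (\mathcal{A} f)(L_r)\,\d r\right] \;=\; \int (\mathcal{A} f)\, \d \mu .
\]
This shows that the density $\ell$ of $\mu$ solves $\mathcal{A}^* \ell = h_1 - h_0$ in the distributional sense. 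Under Assumption~\ref{ass:regularity} the Fourier-side inversion is unique in the relevant class, forcing $\ell = H$ pointwise, and hence $H \geq 0$ together with $\int H = \mu(\R) = \E^{\mu_0}[\tau] < \infty$.

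For sufficiency together with (ii) and (iii), I would start from the premise \eqref{BJ2} and show that the expected occupation density of $L$ up to the explicit $\tau$ of (ii) is exactly $H$. Writing $C(t) := \int_0^t e^{-G(r)}\, h_1(L_r)/H(L_r)\,\d r$ so that $\tau = C^{-1}(1) \wedge \rho$, and performing the change of variable $s = C(r)$ with $\d r = e^{G(r)} H(L_r)/h_1(L_r)\, \d s$, the aim is to derive
\[
\E^{\mu_0}\!\left[\int_0^\tau g(L_r)\,\d r\right] \;=\; \int g(x)\, H(x)\,\d x
\]
for bounded measurable $g \ge 0$. The exponential factor $e^{-G}$ built into $C$ is engineered precisely for this: it acts as a multiplicative functional that converts a "rate against $H$" into a "rate against $h_1 - h_0$", so that mass balance matches the Poisson equation. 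Once this occupation identity is in hand, combining it with $\mathcal{A}^* H = h_1 - h_0$ and Dynkin's formula yields $L_\tau \sim \mu_1$ (hence (ii)), and specializing $g \equiv 1$ gives $\E^{\mu_0}[\tau] = \int H$ (hence (iii) and the sufficiency in (i)).

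The principal obstacle is controlling the behaviour of the construction near the zero set of $H$. One must prove that $\tau < \rho$ almost surely, so that the time change $C^{-1}$ actually hits $1$ before $L$ enters $\{H = 0\}$; otherwise the change of variables above is not valid on the bad event $\{\tau = \rho\}$ and the density of $L_\tau$ inherits an unwanted singular part. This will require combining the non-negativity and $L^1$-integrability of $H$ from \eqref{BJ2} with Fubini-type manipulations under Assumption~\ref{ass:regularity}, together with a Khasminskii-style estimate showing the finite-mean condition prevents $L$ from spending too much time in neighbourhoods of $\{H=0\}$. Establishing this well-definedness, and simultaneously verifying that the Dynkin formula applies all the way up to $\tau$ (and not merely up to $\tau \wedge N$ for deterministic cut-offs), will be the technical core of the argument.
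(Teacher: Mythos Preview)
Your necessity argument is essentially the paper's: Dynkin's formula identifies the expected occupation measure of a finite-mean embedding as a distributional solution of $\mathcal{A}^*\ell = h_1-h_0$, and Fourier inversion pins this down as $H$, yielding $H\ge 0$ and $H\in L^1$. The paper executes this by computing the Fourier transform of the occupation measure directly (via Dynkin applied to complex exponentials) rather than passing through test functions, but the content is the same.

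For sufficiency, however, your route diverges from the paper's and contains a genuine gap. You want to establish the occupation identity
\[
\E^{\mu_0}\!\left[\int_0^\tau g(L_r)\,\d r\right]=\int g(x)H(x)\,\d x
\]
directly by the change of variable $s=C(r)$. But that substitution produces
\[
\E^{\mu_0}\!\left[\int_0^1 g\bigl(L_{C^{-1}(s)}\bigr)\,e^{G(C^{-1}(s))}\,\frac{H(L_{C^{-1}(s)})}{h_1(L_{C^{-1}(s)})}\,\d s\right],
\]
and to evaluate this you would need the law of $L_{C^{-1}(s)}$ for each $s\in[0,1]$ --- which is precisely the unknown. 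The claim that the exponential factor $e^{-G}$ is ``engineered'' to make this work is not a proof strategy; nothing in your outline explains why the time-changed process has the marginals that would close the circle. The paper fills this gap by an entirely different mechanism: it interprets $C^{-1}$ as the time-change $\delta$ associated to the speed function $\sigma(t,x)=H(x)/\phi(t,x)$ with $\phi(t,\cdot)=(1-t)h_0+th_1$, shows that both the marginals of $L_{\delta(\cdot)}$ and the family $(\phi(t,\cdot))_{t\in[0,1]}$ solve the same time-inhomogeneous Fokker--Planck equation for $\sigma\mathcal{A}$, and then invokes a uniqueness theorem for that equation (developed in the companion paper) to conclude $L_{\delta(s)}\sim\phi(s,\cdot)$. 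Only \emph{after} the marginals are known does the paper compute $\E^{\mu_0}[\tau]=\int H$, via $\E^{\mu_0}[\int_0^1\sigma(u,L_{\delta(u)})\,\d u]=\int_0^1\int\sigma(u,x)\phi(u,x)\,\d x\,\d u$.

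Your diagnosis of the obstacle near $\{H=0\}$ is also off target. The paper does not prove $\tau<\rho$ a.s.\ by a Khasminskii-type estimate; instead it shows that $H\in D(\mathcal{A})$ (this is where Assumption~\ref{ass:regularity} is really used), which makes $H$ regular in a technical sense guaranteeing the time-change is well-posed, and it handles the possible unboundedness of $\sigma$ by an approximation: replace $h_i$ by $h_i^{(\varepsilon)}=(1-\varepsilon)h_i+\varepsilon H/\!\int H$, for which $\sigma^{(\varepsilon)}$ is bounded, and pass to the limit using monotonicity of the resulting stopping times and quasi-left-continuity of $L$.
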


The conditions might look complicated at first sight but they are explicit since they only involve the given densities and the given characteristic exponent of the L\'evy process. Also the stopping time is fairly explicit: it only involves the process and explicit functions but no further stochastic quantities (e.g. local times).\smallskip

Even though the three conditions in~\eqref{BJ2} cannot be considered separately from each other, each of them has an interpretation in analogy to the Brownian case mentioned above: Since $\eta(0)=0$, the integrability at zero of $(\widehat{\mu_1} - \widehat{\mu_0})/\eta$ forces a decay of $\widehat{\mu_1} - \widehat{\mu_0}$ in relation to the behavior of $\eta$ at zero. Since the behavior at zero of a characteristic function relates to the moments, the integrability of $(\widehat{\mu_1} - \widehat{\mu_0})/\eta$ is an abstract condition for equal first moments of $\mu_0, \mu_1$. Non-negativity of $H$ is a generalization of the convex order condition for Brownian motion and integrability of $H$ corresponds to finite second moments. \smallskip

\begin{remark}
  Note that for lattice type L\'evy processes there exist $u_0 \neq 0$ with $\eta(u_0)=0$. For such $u_0$ the condition $(\widehat{\mu_1} - \widehat{\mu_0})/\eta \in L^1(\R)$ in \eqref{BJ2} thus requires a decay of $\widehat{\mu_1}(u) - \widehat{\mu_0}(u)$ as $u \to u_0$.
\end{remark}

While Bertoin and Le Jan~\cite{Bertoin1992} deal with necessary and sufficient conditions for the solvability of the Skorokhod embedding problem for, in particular, certain L\'evy processes and $\mu_0=\delta_0$ (see above), sufficient conditions for different types of L\'evy processes and $\mu_0=\delta_0$ were also obtained in \cite{Monroe1972} and \cite{Obloj2009}. Namely, Monroe~\cite{Monroe1972} adresses symmetric $\alpha$-stable L\'evy processes with $\alpha \in (1,2]$ and Ob{\l}{\'o}j and Pistorius~\cite{Obloj2009} the case of spectrally negative L\'evy processes. In a more abstract setting Falkner and Fitzsimmons~\cite{Falkner1991} provide even necessary and sufficient conditions for general but transient Markov processes, which cover only partially the class of L\'evy processes. For a relaxed version of the SEP (allowing for randomized stopping times, i.e. allowing for stopping times which are measurable w.r.t. a larger filtration than the natural filtration generated by the underlying Markov process) Rost~\cite{Rost1971} shows necessary and sufficient conditions for general Markov processes. A discussion about differences between randomized and non-randomized solutions to the SEP can be found for instance in~\cite{Falkner1991}.

\begin{remark}\label{remark}
  All results about the Skorokhod embedding problem are presented for L\'evy processes for the sake of clarity. However, we believe that most arguments can be extended to more general Markov processes under rather unhandy conditions. 
  The main finding of this article reveals a direct connection between the Skorokhod embedding problem and the existence and positivity/integrability of solutions to the Poisson equation 
  \begin{align}\label{eq:poisson eq}
    \mathcal A^* H = h_1-h_0,
  \end{align}
  where $\mathcal A^*$ denotes (if it exists) the formal adjoint operator of the generator $\mathcal A$ of the given Markov process $L$. A sketch is given in Section~\ref{sec:sketch} to explain why the existence of a positive and integrable solution to the Poisson equation contains exactly the information needed for the finite mean Skorokhod embedding problem with densities $h_0$ and $h_1$.
\end{remark}

In contrast to Remark~\ref{remark} the statement of Theorem~\ref{thm:main} involves explicit quantities instead of solutions to Poisson equations. This, indeed, is a speciality for L\'evy processes for which the Poisson equation can be solved in Fourier space: To see this we recall the Fourier representation $\widehat{\mathcal A^* H}(u)=\eta(u) \widehat{H}(u)$ of $\mathcal A^*$, where $\mathcal A^*$ is the generator of the dual L\'evy process $-L$. To solve \eqref{eq:poisson eq} one takes Fourier transforms of both sides, solves as $\widehat{H}=(\widehat{h_1}-\widehat{h_0})/\eta$ and takes the inverse Fourier transform. This gives exactly the form of $H$ given in Theorem~\ref{thm:main}. Taking the inverse Fourier transform is valid thanks to the property $\widehat{H}=(\widehat{h_1}-\widehat{h_0})/\eta\in L^1$. This analysis in the context of L\'evy processes is carried out in Subsection~\ref{poisson}, see in particular Proposition \ref{p1}.

\begin{remark}
Let us compare our results to those of \cite{Rost1970} in more detail. Define the measures $\mu_i U$ by 
\[ \mu_i U(A) := \E^{\mu_i} \left[ \int_0^\infty 1_{A}(L_t) \d t \right], \quad A \in \mathcal{B}(\R). \]
Under the assumption that $\mu_0 U$ is a $\sigma$-finite measure (which is a transience hypothesis, see \cite{Falkner1991}), 
\cite{Rost1970} proves that a (randomized) stopping time $\bar{\tau}$ such that $L_{\bar{\tau}} \sim \mu_1$ under $\P^{\mu_0}$ exists if and only if $\mu_0 U \geq \mu_1 U$. 
Since a solution to (SEP) is in particular also a randomized stopping time, the necessary and sufficient condition \eqref{BJ2} in Theorem~\ref{thm:main} implies $\mu_0 U \geq \mu_1 U$. 
This property appears to correspond to non-negativity of $H$, whereas the other two conditions in \eqref{BJ2} seem to correspond to the additional restrictions posed on the stopping time in (SEP), 
namely that it should be non-randomized (which is implicit in our formulation) and have finite expected value.
\end{remark}

\subsection{Regularity Assumptions}\label{sec:smoothness}

For different L\'evy processes the necessary and sufficient conditions for the solvability of the Skorokhod embedding problem (SEP) provided in Theorem~\ref{thm:main} require different regularity assumptions on the initial density $h_0$ and the target density $h_1$. In order to state these assumptions, we distinguish between the following types of L\'evy processes.

\begin{definition}
  We say a L\'evy process with characteristic exponent $\eta$ is of type
  \begin{itemize}
    \item[S] if it is symmetric and $\int_1^\infty \frac{1}{|\eta(u)|}\dd u < \infty$,
    \item[0] if $\liminf_{u\to \infty} |\eta(u)|\in (0,\infty]$,
    \item[D] if $\liminf_{u\to \infty} |\eta(u)|=0$.
  \end{itemize}
\end{definition}

Notice that these three types cover all L\'evy processes as in particular any L\'evy process is either of type 0 or of type D. Based on this classification, we make the following regularity assumptions on the densities $h_0, h_1$.

\begin{assumption}[Regularity Assumptions]\label{ass:regularity}~
  \begin{itemize}
    \item If $L$ is of type~S, then $h_0, h_1 \in C_0(\R)$.
    \item If $L$ is of type~0, then $h_i \in C_0^2(\mathbb{R})$ with $h_i^{(k)} \in L^1(\mathbb{R})$ for $k=1,2$, $i=0,1$, where $h_i^{(k)}$ is the $k$-th derivative of $h_i$.
    \item If $L$ is of type~D, then $\widehat{h_1}-\widehat{h_0} \in C_c(\R)$. 
    \end{itemize}
\end{assumption}

The L\'evy processes considered by Bertoin and Le Jan are of type 0 as we will see in the next remark. The subclass of processes considered in the Appendix of \cite{Bertoin1992} (for which conditions \eqref{BJ} were proved) are even of type S.

\begin{remark}
  In \cite{Bertoin1992}, the L\'evy processes are assumed to be recurrent and satisfy that $0$ is regular for $0$. Excluding the compound Poisson case, the last condition is equivalent to condition~(i) of Lemma~\ref{lem:levyass} below and
  \begin{equation*}
    \sigma^2 > 0 \quad \text{ or } \quad \int_\R (|x| \wedge 1)\,\nu(\d x) = \infty,
  \end{equation*}
  see~\cite[Section I.30]{Rogers2000}. Hence, these processes are of type~0 by Lemma~\ref{lem:levyass}.
\end{remark}

Let us give further examples:

\begin{example}
  \textup{Type~S}: Symmetric $\alpha$-stable L\'evy processes with index $\alpha \in (1,2]$ are of type~S. Indeed, for such processes one has $\eta(u)=-c|u|^{\alpha}$ for some $c>0$ and so $\int_1^\infty |\eta(u)|^{-1} \d u = (\alpha -1)/c  < \infty$ is satisfied. In particular, a Brownian motion is of type~S and so for a Brownian motion Theorem~\ref{thm:main} provides a solution to the SEP for any positive, continuous densities $h_0, h_1\in C_0(\R)$ which have the same first moment, finite second moments and which are in convex order.  
  
  \textup{Type~0}: Symmetric $\alpha$-stable L\'evy processes with index $\alpha \in (0,1]$ are of type~0, but not of type~S.
  
  \textup{Type~D}: Lattice-type compound Poisson processes are of type~D. Other examples of processes of type~D can be found in \cite[Exercise~I.7]{Bertoin1996} and \cite[Example~41.23]{Sato1999}.
\end{example}

In fact, L\'evy processes of type 0 form a large class as demonstrated by the sufficient conditions presented in the next lemma. 

\begin{lemma}\label{lem:levyass} 
  Suppose that either
  \begin{itemize}
    \item[(i)] $\int_\R \mathrm{Re}\left(\frac{1}{1-\eta(\xi)}\right) \dd \xi  < \infty$ or 
    \item[(ii)] for some $t > 0$, the distribution of $L_t-L_0$ has a non-trivial absolutely continuous part.
  \end{itemize}
  Then $L$ is of type 0.
\end{lemma}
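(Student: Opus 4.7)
\textbf{The plan} is to reduce both hypotheses to a lower bound on $-\mathrm{Re}\,\eta$ at infinity, and then to use the pointwise inequality $|\eta|\geq -\mathrm{Re}\,\eta$, which holds because $|e^{t\eta(u)}|\leq 1$ for every $t\geq 0$ forces $\mathrm{Re}\,\eta\leq 0$. Once $\liminf_{|u|\to\infty}(-\mathrm{Re}\,\eta(u))>0$ is established, one immediately has $\liminf_{u\to\infty}|\eta(u)|>0$, i.e.\ $L$ is of type~$0$.

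\textbf{For (ii)}, I would fix a $t>0$ at which $\mu_t:=\mathrm{dist}(L_t-L_0)$ has a non-trivial absolutely continuous part and write the Lebesgue decomposition $\mu_t=\mu_t^{ac}+\mu_t^s$ with $c:=\mu_t^{ac}(\R)>0$. Then $|\hat\mu_t(\xi)|=e^{t\,\mathrm{Re}\,\eta(\xi)}$, and the Riemann-Lebesgue lemma applied to the $L^1$ density of $\mu_t^{ac}$ gives $\hat\mu_t^{ac}(\xi)\to 0$, while the trivial bound yields $|\hat\mu_t^s(\xi)|\leq\mu_t^s(\R)=1-c$. Hence $\limsup_{|\xi|\to\infty}|\hat\mu_t(\xi)|\leq 1-c<1$, and taking logarithms delivers $\limsup_{|\xi|\to\infty}\mathrm{Re}\,\eta(\xi)\leq t^{-1}\log(1-c)<0$, as required.

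\textbf{For (i)}, the natural object is the $1$-potential measure $U^{(1)}(\d x):=\int_0^\infty e^{-t}\P^0(L_t\in \d x)\,\d t$. A Fubini computation gives $\hat U^{(1)}(\xi)=1/(1-\eta(\xi))$, and from $\mathrm{Re}\,\eta\leq 0$ one obtains the pointwise inequality
\[
 \mathrm{Re}\!\left(\frac{1}{1-\eta(\xi)}\right)=\frac{1-\mathrm{Re}\,\eta(\xi)}{|1-\eta(\xi)|^2}\geq\left|\frac{1}{1-\eta(\xi)}\right|^2,
\]
so condition (i) upgrades to $1/(1-\eta)\in L^2(\R)$. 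I would then identify $U^{(1)}$ with the $L^2$ inverse Fourier transform $g$ of $1/(1-\eta)$: since $U^{(1)}$ and $g(x)\,\d x$ are tempered distributions with the same Fourier transform, they agree as elements of $\mathcal{S}'(\R)$, and this is enough to conclude $U^{(1)}(\d x)=g(x)\,\d x$ with $g\in L^2(\R)$. Because $U^{(1)}$ is a finite measure, $g\in L^1(\R)$ as well, and the $L^1$ Riemann-Lebesgue lemma applied to $g$ gives $1/(1-\eta(\xi))=\hat g(\xi)\to 0$ as $|\xi|\to\infty$, whence $|\eta(\xi)|\to\infty$.

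\textbf{The main subtlety} I anticipate lies in the passage from $1/(1-\eta)\in L^2$ to the absolute continuity of $U^{(1)}$: since $1/(1-\eta)$ need not belong to $L^1(\R)$, classical pointwise Fourier inversion is unavailable, and one must pass through tempered distributions and uniqueness of the Fourier transform on $\mathcal{S}'(\R)$ to recover a genuine density for $U^{(1)}$.
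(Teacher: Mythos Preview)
Your proof is correct. For part~(ii) you and the paper use the same core idea (Riemann--Lebesgue applied to the absolutely continuous part of the law of $L_t-L_0$); you argue directly and even extract a quantitative bound $\limsup_{|\xi|\to\infty}\mathrm{Re}\,\eta(\xi)\leq t^{-1}\log(1-c)$, whereas the paper argues by contraposition.

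For part~(i) the two arguments are genuinely different. The paper proceeds by contraposition: assuming $\eta(u_k)\to 0$ along a sequence $|u_k|\to\infty$, it shows that the Fourier transform of the $q$-potential measure $qV^q$ satisfies $\widehat{qV^q}(u_k)\to 1$, hence does not vanish at infinity, so by Riemann--Lebesgue $qV^q$ has no absolutely continuous part; it then invokes results from Sato (Theorem~43.3 and Remark~43.6) to conclude that condition~(i) fails. Your route is direct and self-contained: the pointwise inequality $\mathrm{Re}\bigl(1/(1-\eta)\bigr)\geq |1/(1-\eta)|^2$ upgrades~(i) to $1/(1-\eta)\in L^2(\R)$, whence the $1$-potential $U^{(1)}$ coincides in $\mathcal{S}'(\R)$ with an $L^2$ function $g$; positivity and finiteness of $U^{(1)}$ then force $g\geq 0$ a.e.\ and $g\in L^1(\R)$, so Riemann--Lebesgue gives $1/(1-\eta(\xi))\to 0$, i.e.\ $|\eta(\xi)|\to\infty$. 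This avoids the black-box reference to Sato and in fact yields the stronger conclusion $\lim_{|\xi|\to\infty}|\eta(\xi)|=\infty$ rather than merely $\liminf>0$. The step you flag as the main subtlety---identifying $U^{(1)}$ with $g\,\d x$ via uniqueness of the Fourier transform on $\mathcal{S}'(\R)$---is handled correctly.
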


\begin{proof}
  We argue by contraposition. Suppose there exists $\{u_k\}_{k \in \mathbb{N}} \subset \mathbb{R}$ such that $\lim_{k \to \infty} |u_k| = \infty$ and  $\lim_{k \to \infty}\eta(u_k) =  0$. Then condition (ACP) in \cite{Sato1999} cannot be satisfied by the argument used in \cite[Example~41.23]{Sato1999}, which we reproduce here for convenience: denoting for $q>0$ by $V^q$ the $q$-potential measure 
  \[ V^q(B)= \E \left[ \int_0^\infty e^{-q t} 1_{B}(L_t) \d t \right], \quad B \in \mathcal{B}(\R),  \]
  one may use \cite[Prop.~37.4]{Sato1999} to obtain
  \[ \lim_{k \to \infty} \widehat{q V^q}(u_k) = \lim_{k \to \infty} \frac{q}{q-\eta(u_k)} = 1 = q V^q(\R). \]
  Since $\lim_{k \to \infty} |u_k| = \infty$, this shows that  $\widehat{q V^q}$ does not vanish at infinity and so by the Riemann-Lebesgue Theorem $q V^q$ does not have an absolutely continuous part. Thus, condition (ACP) in \cite{Sato1999} is indeed not satisfied. Combining this with \cite[Thm.~43.3]{Sato1999} and \cite[Remark~43.6]{Sato1999}, condition~(i) does not hold. Similarly, for any $t > 0$, $\lim_{k \to \infty} \E^{\mu_0}[e^{i u_k (L_t-L_0)}] = 1$ and by the Riemann-Lebesgue Theorem it follows that the law of $L_t-L_0$ does not have an absolutely continuous part. Hence, (ii) does not hold either.
\end{proof}

\subsection{Sketch of the Proof}\label{sec:sketch}

For the convenience of the reader we give a brief sketch of the arguments to explain why the existence of a finite mean Skorokhod embedding for a Markov process is related to the existence of a non-negative and integrable solution to the Poisson equation $\mathcal A^* H=h_1-h_0$. The link to the theorem then comes from explicit solvability of the Poisson equation in the case of L\'evy processes as explained below Remark \ref{remark}.

\subsubsection{Necessity of the Conditions} 

Suppose there is a finite mean Skorokhod embedding $\tau$ for a Markov process with generator $\mathcal A$ and adjoint $\mathcal A^*$ for the measures $\mu_i(\d x)=h_i(x)\dd x$. Since $\tau$ has finite mean, Dynkin's formula yields\footnote{See Section~\ref{subsec:levyprelim} for the definition of $D(\mathcal A) \subset C_0(\R)$ and note e.g.\ $C_c^2(\R) \subset D(\mathcal{A})$.}
\begin{align*}
  \E^{\mu_0} \left[ \int_0^{\tau} \mathcal{A} f(L_s) \dd s \right] = \E^{\mu_0}[f(L_{\tau})] - \E^{\mu_0}[f(L_0)],\quad f\in  D(\mathcal A).
\end{align*}
Let us assume for the moment there is a solution $H$ to the Poisson equation $\mathcal A^*H=h_1-h_0$. Integrating out the assumed distributions $L_\tau\sim \mu_1$ and $L_0\sim \mu_0$ and then switching to the adjoint operator gives
\begin{align*}
  \E^{\mu_0} \left[ \int_0^{\tau} \mathcal{A} f(L_s) \dd s \right] = \int_{\R} f(x)(h_1(x)-h_0(x))\dd x=\int_{\R} f(x) \mathcal A^* H(x)\dd x  =\int_{\R} \mathcal A f(x) H(x)\dd x.
\end{align*}
Now suppose the range of $\mathcal A$ is rich enough to approximate any positive test function with compact support and constant functions, then one obtains integrability and non-negativity for $H$: the first as $\mathcal A f\equiv 1$ gives $\int_\R  H(x)\dd x=\E^{\mu_0} \left[\tau \right] <\infty$ and the second since the left-hand side is non-negative whenever $\mathcal A f\ge 0$. Hence, if the existence of a finite mean Skorokhod embedding also implies solvability of the Poisson equation, then the solution $H$ is necessarily positive and integrable.\smallskip

For the existence of $H$ properties of $\mathcal A^*$ need to be studied in detail. In the case of a L\'evy process we prove (under regularity assumptions on $h_0, h_1$) that the existence of a finite mean Skorokhod embedding implies $(\widehat{\mu_1} -\widehat{\mu_0} )/\eta \in L^1(\R)$, from which it then follows (see the discussion below Remark~\ref{remark}) that a solution $H$ to the Poisson equation exists and is given by the inverse Fourier transform of $(\widehat{\mu_1} -\widehat{\mu_0} )/\eta$.
 
\begin{remark}
  The heart of the argument is the use of Dynkin's formula which does not assume the underlying process to be L\'evy. For the existence of the solution $H$ to the Poisson equation our argument does not extend to more general Markov processes. Nonetheless, we do believe the Poisson equation for more general processes is solvable as soon as there is a finite mean solution to the Skorokhod embedding problem.
\end{remark}

\subsubsection{Sufficiency of the Conditions}\label{sec:summary}

Assume there is a solution $H$ to the Poisson equation~\eqref{eq:poisson eq} which is non-negative and integrable. The approach presented in this article is inspired by Bass' solution \cite{Bass1983} to the classical Skorokhod embedding problem for Brownian motion. While the construction of Bass is short and elegant due to particular properties of Brownian motion, our variant requires more machinery. 
\smallskip

To start with let us first recall Bass' strategy. Let $W$ be a Brownian motion and $\mu$ be a centered probability measure on $\R$ with finite second moment. Bass' construction, slightly reinterpreted, can be split into two steps:\smallskip

\textbf{Step~1:} The first observation is that there is a function $g\colon \R\to \R$ such that $g(W_1)\sim \mu$ and thus $Y_t:= \mathbb{E}[g(W_1)|\mathcal{F}_t]$, $t\in [0,1]$, gives a martingale with $Y_0=0$ and $Y_1\sim \mu$. Based on the strong Markov property, the knowledge of the marginal distributions of the Brownian motion and It\^o's formula, Bass showed that there exists a function $\sigma \colon [0,1]\times \R\to [0,\infty)$ such that $Y$ is a weak solution to the stochastic differential equation 
\begin{equation}\label{eq:sde intro}
  \d Z_t = \sqrt{\sigma (t,Z_t)}\dd W_t \quad \text{with}\quad Z_0\sim \delta_0\quad \text{and} \quad Z_1 \sim \mu.
\end{equation}

\textbf{Step~2:} A time-change $\tau$ is constructed as the unique solution of the random ordinary differential equation 
\begin{equation*}
  \tau^\prime (t) =  \sigma (t,W_{\tau(t)})\quad \text{with}\quad\tau(0)=0
\end{equation*}
and it is shown that $(\tau (t))_{t\in [0,1]}$ is a family of stopping times with respect to the filtration generated by $W$. By general time-change arguments (or Dubins-Schwarz Theorem in the Brownian case), $(W_{\tau(t)})_{t\in [0,1]}$ turns out to be a weak solution of \eqref{eq:sde intro} and thus $(W_{\tau(t)})_{t\in [0,1]}$ has the same distribution as $(Y_t)_{t\in [0,1]}$ if $\sigma$ has enough regularity to ensure weak uniqueness to the stochastic differential equation~\eqref{eq:sde intro}. As $Y_1\sim \mu$ by Step~ 1, $\tau(1)$ is a solution to the classical Skorokhod embedding problem for Brownian motion.\smallskip

From an analytical point of view, Bass solved in Step 1 an inverse problem for a second order partial differential equation. Indeed, given the two marginal distributions $\delta_0$ and $\mu$, Bass first writes down a process $Y$ with some marginal distributions  $p(t,\cdot)$, $t\in [0,1]$, that match the prescribed marginals at times $0$ and $1$. He then finds a $\sigma$ such that the Fokker-Planck equation for the time-inhomogeneous generator $\frac{\sigma}{2} \frac{\partial^2}{\partial x^2}$ 
\begin{align}\label{dd}
  \int_\mathbb{R} f(x)\, p(t,\d x) - \int_\mathbb{R} f(x) \,\delta_0(\d x) =\int_0^t \int_\mathbb{R} \sigma(s,x) \frac{1}{2} \frac{\partial^2}{\partial x^2}f(x)\, p(s,\d x)\dd s ,\quad t\in [0,1],
\end{align}
is solved by the family of marginal distributions. To circumvent the probabilistic derivation of $\sigma$ through the clever choice of $Y$ in Step 1 of Bass (which does not extend to L\'evy processes) we directly solve the Fokker-Planck equation \eqref{dd} for $\sigma$ with a carefully chosen Ansatz for the marginals $p(t,\cdot)$, $t\in [0,1]$. A priori, our Ansatz is not related to a stochastic process $Y$ and we need to work hard to justify the existence of a process $Y$ with the prescribed marginals. For the special case of a Brownian motion, our marginals differ from Bass' marginals so also $\sigma$ differs and, as a consequence, our stopping time $\tau$ differs from Bass' stopping time.

\begin{remark} 
  The idea we implement stems from implied volatility theory and goes back to Dupire~\cite{Dupire1994}. 
  The original idea goes as follows: Suppose that in a Brownian market model $\d S_t=\sigma(t,S_t)\dd B_t$ all European call prices
  \begin{align*}
    C(T,K)=\E[\max\{S_T-K,0\}]=\int_\R \max\{x-K,0\} \phi(T,x) \dd x
  \end{align*}	
  at time $0$ for strike $K$ and maturity $T$ are known but $\sigma$ is only known to exist but not explicit. Here, $\phi(t,\cdot)$ denotes the marginal density of $S$ at time $t$. Differentiating the known call prices~$C$ twice with respect to the strike prices gives the key formula $\phi(T,K)=C_{KK}(T,K)$. This shows that from the knowledge of all option prices one can deduce the entire solution~$\phi$ to the Fokker-Planck equation. In order to identify the model that implied the observed option prices (i.e. recover $\sigma$ from the Fokker-Planck equation) Dupire suggested a formula for $\sigma$ that we recall below. A generalization of this idea was carried out in \cite{Carr2004} for jump diffusion models.
\end{remark}

\begin{remark}
  Similar ideas have been used e.g. in \cite{Hirsch2011}, \cite{Ekstrom2013} and \cite{Cox2011} to construct (martingale) diffusions that match prescribed marginal distributions at given (random) times. This provides alternative constructions of $Y$ in Step 1. By repeating Step 2 such processes could potentially be used to construct alternative solutions to the classical Skorokhod embedding problem for Brownian motion. Note that as in the case of Bass~\cite{Bass1983} these constructions are specific to the case of Brownian motion and, as we are interested in general L\'evy processes, we had to develop new ideas.
\end{remark}

Here, and in what follows, we say $\sigma$ is the (possibly time-dependent) local speed function of a process $Y$, if $Y$ has generator $\sigma \mathcal A$ for a time-homogeneous generator $\mathcal A$ of another (given) Markov process. The corresponding Fokker-Planck equation satisfied by the densities $\phi$ (if they exist) of $Y$ is
\begin{align}\label{bbb}
  \int_\mathbb{R} f(x)\, \phi(t,x)\dd x- \int_\mathbb{R} f(x) \,\mu_0(\d x) = \int_0^t \int_\mathbb{R} \sigma(s,x) \mathcal{A}f(x) \phi(s,x)\dd x \dd s,\quad t\geq 0,
\end{align}
for the starting distribution $\mu_0$.\smallskip

Our approach to the Skorokhod embedding problem takes up the ideas from implied volatility but is fundamentally different from option pricing at the same time: Dupire assumes a priori that the solution to the Fokker-Planck equation (equivalently all option prices) is implied by some~$\sigma$ and then recovers $\sigma$ from the option prices through the Fokker-Planck equation. In particular, the Fokker-Planck equation is a priori assumed to be well-posed. We proceed differently: we suggest a family $(\phi(t,\cdot))_{t\in [0,1]}$ of densities and hope to find a $\sigma$ so that the corresponding Fokker-Planck equation has the unique solution $\phi$. Of course, a priori there is no reason to believe this $\sigma$ exists and it strongly depends on assumptions posed upon $\phi$! Comparing with Bass' approach, instead of deriving $\phi$ as densities of $Y_t:= \mathbb{E}[g(W_1)|\mathcal{F}_t]$ we write down an Ansatz for $\phi$:
\begin{align}\label{eq:ph}
  \phi(t,\cdot)=th_1+(1-t)h_0,\quad t\in [0,1],
\end{align}
where $h_0,h_1$ are the densities from the Skorokhod embedding problem. The Ansatz for $\phi$ looks arbitrary (it is arbitrary, indeed) but below it turns out to work very nicely.
In order to derive a formula for $\sigma$ in terms of $\phi$, we follow the idea of Dupire: Assuming such a $\sigma$ exists, the transition densities $\phi$ have to solve the Fokker-Planck equation~\eqref{bbb} for nice test functions $f$. Taking derivatives with respect to~$t$ yields
\begin{align*}
  \int_\mathbb{R} f(x) \partial_t \phi(t,x) \dd x  = \int_{\mathbb{R}} \sigma(x,t)\mathcal{A}f(x)\phi(t,x) \dd x
  = \int_{\mathbb{R}} f(x) \mathcal{A}^*[\sigma(t,\cdot)\phi(t,\cdot)](x) \dd x,
\end{align*}
where $\mathcal{A}^*$ is the adjoint operator. Hence, $\sigma$ needs to satisfy the equation 
\begin{equation*} 
  \partial_t \phi(t,x) = \mathcal{A}^*[\sigma(t,\cdot)\phi(t,\cdot)](x)
\end{equation*}
which, solving for $\sigma$, gives the "generalized Dupire formula"
\begin{equation*}
  \sigma(t,x) = \frac{\left((\mathcal{A}^*)^{-1} \partial_t \phi(t,\cdot) \right)(x)}{\phi(t,x)} .
\end{equation*}
Plugging-in the choice \eqref{eq:ph} of $\phi$ and then using the assumption that there is a solution~$H$ to the Poisson equation $\mathcal A^*H=h_1-h_0$ yields the formula
\begin{equation}\label{eq:sigma}
  \sigma(t,x)=\frac{H(x)}{\phi(t,x)}.
\end{equation}
If now for this $\sigma$ there is a unique Markov process $Y$ with generator $\sigma \mathcal A$, then this is the analogue to the solution to \eqref{eq:sde intro} in the approach of Bass.\smallskip

Finally, we obtain from $Y$ the stopping time $\tau$ in the same way Bass did in his Step 2: Solve the random ODE $\tau'(t)=\sigma(t,Y_t)$ and define $\tau:=\tau(1)$. This is where the positivity assumption on $H$ enters the proof: a time-change should be an increasing function. Since by construction $Y_1$ has marginal distribution $\phi(1,x)\dd x=h_1(x)\dd x$ and $L_{\tau(1)}\sim Y_1$, $\tau$ is a solution to the Skorokhod embedding problem.\smallskip
  
It only remains to show that $\tau$ has finite mean, but this is immediate from the definitions above (integrating out the law $Y_t\sim \phi(t,x)\dd x$) and the assumed integrability of $H$:
\begin{align*}
  \mathbb{E}[\tau(1)] & = \mathbb{E}\bigg[\int_0^1 \sigma(t,L_{\tau (t)}) \dd t \bigg ]
  = \mathbb{E}\bigg[\int_0^1 \sigma(t,Y_t) \dd t \bigg ]
  = \int_0^1 \int_\mathbb{R} \sigma(t,x) \phi(t,x) \dd x \dd t   = \int_\mathbb{R} H(x) \dd x.
\end{align*}
\smallskip

Here is a summary of our strategy:
\begin{align*}
  \underbrace{\mu_0, \mu_1 \xrightarrow{\text{Ansatz}} \phi \xrightarrow{\text{Dupire formula}}\sigma \xrightarrow{\text{martingale problem}}\sigma \mathcal A\text{ Markov}}_{\text{replacing Step 1 of Bass}}\underbrace{\text{ process }Y\xrightarrow{\text{time-change}}\tau=\tau(1)}_{\text{extending Step 2 of Bass}}
\end{align*}

\begin{remark}
  The arguments rely on classical time-change techniques for Markov processes and existence/uniqueness results for Fokker-Planck equations with time-inhomogeneous coefficients. In the time-homogeneous case many of the results needed can be found in \cite{Ethier1986a}. 
  Results under minimal conditions for the time-inhomogeneous case are developed in the accompanying article~\cite{Nr2}. For convenience of the reader those results from~\cite{Nr2} which are required in the proof here are stated in the next section.
\end{remark}

\section{Preliminaries}

This section starts by stating the notation and definitions used throughout the article. In the subsequent subsections we then introduce the necessary preliminaries about L\'evy processes, time-changes and the associated Fokker-Planck equations.

\subsection{Notation and definitions}

$C_0(\mathbb{R})$ denotes the space of all continuous functions $f\colon \R \to \R$ satisfying $\lim_{|x|\to \infty } f(x) = 0$  and $C_b(\R)$ is the space of bounded continuous functions on $\R$. For $n\in \mathbb{N}$ let $C^n_0(\R)$ be the subset of functions $f\in C_0(\R)$ such that $f$ is $n$-times differentiable and all derivatives of order less or equal to $n$ belong to $C_0(\R)$ and we set $C_0^\infty(\R):=\bigcap_{n\in \mathbb{N}}C^n_0(\R)$. The spaces of functions with compact support $C_c(\R)$, $C_c^n(\R)$ and $C_c^\infty(\R)$ are defined analogously. The space $D_{\R}[0,\infty)$ stands for all maps $\omega \colon [0,\infty) \to \R$ which are right-continuous and have a left-limit at each point $t\in [0,\infty)$ (short: RCLL paths).  For $x \in \R$ and $\varepsilon > 0$, set $B_\varepsilon(x):= \{y \in \R \, :\, |x-y| < \varepsilon\}$. $\mathcal{P}(\R)$ (resp. $\mathcal{P}(D_\R[0,\infty))$) denotes the set of probability measures on $(\R,\mathcal{B}(\R))$ (resp. on $(D_\R[0,\infty),\mathcal{B}(D_\R[0,\infty))$). $B(\R)$ denotes the space of real-valued, bounded, measurable functions on $\R$ and $\|\cdot\|$ is the sup-norm. For $f \in C_0(\R)$ and a sequence $(f_n)_{n \in \mathbb{N}}$ with $f_n \in C_0(\R)$ we say that $f_n$ converges to $f$ in $C_0(\R)$ (and write $f_n \to f$ in $C_0(\R)$, etc.), if $\lim_{n \to \infty} \|f_n-f\|=0$, i.e.\ $f_n$ converges to $f$ uniformly. 
\smallskip

Moreover, we denote by $Z$ the coordinate process on $D_{\R}[0,\infty)$ and make the following definition (analogous to  \cite[Chap.~6, Thm.~1.1]{Ethier1986a}):
\begin{enumerate}[label= D.\arabic*]
  \item\label{def:regular}
        A measurable map $H \colon \R \to [0,\infty)$  is called \textit{regular} for $P\in \mathcal{P}(D_\R[0,\infty))$ if $P$-a.s.
        \begin{equation*}
          \inf \left\lbrace s \in [0,\infty)\,:\,\int_0^s H(Z_u)^{-1} \dd u =\infty \right\rbrace = \rho \quad \text{ and } \quad H(Z_ \rho)=0 \text{ on } \{\rho < \infty\}
        \end{equation*} 
        where $\rho := \inf \left\lbrace s \in [0,\infty)\,:\, H(Z_s) = 0 \right\rbrace$.
\end{enumerate}

Let $\mathcal{D} \subset C_b(\R)$, $\mathcal{A}\colon \mathcal{D} \to C_b(\R)$ linear and $\mu \in \mathcal{P}(\R)$. A \textit{solution to the RCLL-martingale problem for $(\mathcal{A},\mu)$} is an $\R$-valued process $(X_t)_{t \geq 0}$ with RCLL sample paths defined on some probability space $(\tilde{\Omega},\tilde{\mathcal{F}},\tilde{\mathbb{P}})$ such that for each $h \in \mathcal{D}$, the process 
\begin{equation*}
  h(X_t) - h(X_0) - \int_0^t \mathcal{A} h (X_s) \dd s,  \quad t \geq 0,
\end{equation*}
is an $(\mathcal{F}^X_t)_{t\geq 0}$-martingale and $\tilde{\mathbb{P}} \circ X_0^{-1} = \mu$, where $(\mathcal{F}^X_t)_{t\geq 0}$ denotes the filtration generated by~$X$.  The RCLL-martingale problem for $(\mathcal{A},\mu)$ is said to be \textit{well-posed} if there exists a solution and uniqueness holds, that is, if any two solutions $X$ and $\tilde{X}$ to the RCLL-martingale problem for $(\mathcal{A},\mu)$ have the same finite-dimensional distributions.

\subsection{L\'evy processes}\label{subsec:levyprelim}

Recall from the introduction that $(\alpha^2, \gamma, \nu)$ denotes the L\'evy triplet and 
\begin{align*} 
  \eta(u) = - \frac{1}{2}\alpha^2 u^2 + i u \gamma + \int_{\mathbb{R}\setminus \{0\}} (e^{i u y} - 1 - i u y \mathbbm{1}_{\{|y|\leq 1\}} )\,\nu(\d y),  \quad u \in \mathbb{R},
\end{align*}
is the characteristic exponent, i.e. $\mathbb{E}[e^{i u L_t}] = e^{t \eta(u)}$ for $u \in \mathbb{R}$ and $t \geq 0$. In what follows we collect the machinery that we need to study the Poisson equation for L\'evy processes in the next section.\smallskip

For $t \geq 0$ and $f \in C_0(\mathbb{R})$ define the transition semigroup $P_t f(x):= \E[f(L_t + x)], x \in \mathbb{R}$, and, for $q > 0$, $f \in C_0(\mathbb{R})$ the resolvent operators
\begin{equation*}
  U^q f(x) := \int_0^\infty e^{-q t} P_t f(x) \dd t ,  \quad x \in \mathbb{R}.
\end{equation*}
By dominated convergence, $f \in C_0(\mathbb{R})$ implies $P_t f \in C_0(\mathbb{R})$ for any $t \geq 0$ and thus 
\begin{equation*}
  D(\mathcal{A}) : = \big\{ f \in C_0(\mathbb{R}) \,:\, \lim_{t \to 0} t^{-1} ( P_t f - f ) \text{ exists in } C_0(\mathbb{R})\big \}
\end{equation*}
is well-defined. For $f \in D(\mathcal{A})$ define $\mathcal{A} f :=  \lim_{t \to 0} t^{-1} ( P_t f - f )$. Then, see \cite[Thm.~31.5]{Sato1999}, the generator $\mathcal{A}\colon D(\mathcal{A})  \to C_0(\mathbb{R})$ is linear, $C_0^2(\mathbb{R}) \subset D(\mathcal{A})$ and for $u \in C_0^2(\mathbb{R})$ it holds that
\begin{equation}\label{eq:LevyGenerator} 
  \mathcal{A} u (x) = \frac{1}{2} \alpha^2 u''(x) + \gamma u'(x) + \int_{\mathbb{R}\setminus\{0\}}[u(x+y)-u(x)-y u'(x) \mathbbm{1}_{\{|y|\leq 1\}}]\, \nu(\d y), \quad x \in \mathbb{R}.
\end{equation}
Furthermore, $C_c^\infty(\mathbb{R})$ is a core for $\mathcal{A}$. This means by definition that for any $f \in D(\mathcal{A})$, there exists a sequence $\{f_n\}_{n \in \mathbb{N}} \subset C_c^\infty(\mathbb{R})$ such that $\lim_{n \to \infty} f_n = f$ and $\lim_{n \to \infty} \mathcal{A}f_n = \mathcal{A} f$ in $C_0(\mathbb{R})$. In particular, we note the following:

\begin{lemma}\label{lem:levyFeller} 
  Suppose $L$ is a L\'evy process with L\'evy triplet $(\alpha^2,\gamma,\nu)$ and initial distribution $\mu_0$. Set $\mathcal{D}:=C_c^\infty(\mathbb{R})$ and  define $\mathcal{A}\colon\mathcal{D} \to C_0(\mathbb{R})$ as \eqref{eq:LevyGenerator} for $u \in \mathcal{D}$. Then the following hold:
  \begin{enumerate}
     \item[(i)] $\mathcal{D}$ is dense in $C_0(\R)$ and an algebra in $C_0(\R)$,
     \item[(ii)] there exists $\{\phi_n\}_{n \in \mathbb{N}} \subset \mathcal{D}$  such that $\sup_n \|\phi_n\| < \infty$, $\sup_n \|\mathcal{A} \phi_n\| < \infty$ and
     \[ \lim_{n \to \infty} \phi(x) = 1 \text{ and } \lim_{n \to \infty} \mathcal{A} \phi_n(x) = 0 \text{ for } x \in \R, \] 
     \item[(iii)] for any $\mu \in \mathcal{P}(\R)$, the RCLL-martingale problem for $(\mathcal{A},\mu)$ is well-posed,
          \item[(iv)] $L$ is a solution to the RCLL-martingale problem for $(\mathcal{A},\mu_0)$.
   \end{enumerate} 
  Note that the properties (i)-(iii) imply that Assumption~2.7 in \cite{Nr2} is satisfied.
\end{lemma}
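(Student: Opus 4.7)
The plan is to verify the four claims in order, with essentially all of the work going into (iii). Claim (i) is standard: $C_c^\infty(\R)$ is dense in $C_0(\R)$ by a truncate-then-mollify argument, and is closed under pointwise multiplication. For (ii), I would pick $\psi \in C_c^\infty(\R)$ with $0\le\psi\le 1$, $\psi\equiv 1$ on $[-1,1]$, $\operatorname{supp}\psi\subset[-2,2]$, and set $\phi_n(x):=\psi(x/n)$. Then $\phi_n\to 1$ pointwise with $\|\phi_n\|=1$, $\|\phi_n'\|=O(1/n)$, $\|\phi_n''\|=O(1/n^2)$. Plugged into \eqref{eq:LevyGenerator}, the drift and diffusion terms vanish uniformly, and for the L\'evy integral I would split at $|y|=1$: a Taylor bound of order $\tfrac12\|\phi_n''\|y^2$ handles the small jumps (integrable against $\nu$), while on $\{|y|>1\}$ the integrand is bounded by $2$ and integrable against the finite measure $\nu|_{\{|y|>1\}}$. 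This yields a uniform bound on $\|\mathcal{A}\phi_n\|$, and pointwise convergence $\mathcal{A}\phi_n(x)\to 0$ follows by dominated convergence.

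For (iii), existence is a direct consequence of (iv) when $\mu=\mu_0$, and for general $\mu$ of the construction $X := X_0 + \tilde L$ with $X_0\sim\mu$ independent of a L\'evy process $\tilde L$ issued from $0$. The substantive content, and the main obstacle, is uniqueness. The strategy I would pursue is to pin down the finite-dimensional characteristic functions of any solution. By \cite[Thm.~31.5]{Sato1999}, $\mathcal{D}=C_c^\infty(\R)$ is a core for the Feller generator $\mathcal{A}\colon D(\mathcal{A})\to C_0(\R)$, so the martingale property for $\mathcal{D}$ automatically extends to all of $D(\mathcal{A})$. By multiplying the characters $x\mapsto e^{iux}$ by the cut-offs $\phi_n$ from (ii) and passing to a bounded-pointwise limit inside the martingale equation, one obtains that for every $u\in\R$,
\begin{equation*}
  M^u_t := e^{iuX_t} - e^{iuX_0} - \eta(u)\int_0^t e^{iuX_s}\,\d s
\end{equation*}
is a complex martingale in the filtration generated by $X$. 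Conditioning on $\mathcal{F}^X_s$ forces $\E[e^{iuX_t}\mid\mathcal{F}^X_s]=e^{iuX_s}e^{(t-s)\eta(u)}$, which simultaneously fixes the one-dimensional marginals and shows that $X_t-X_s$ is independent of $\mathcal{F}^X_s$ with characteristic function $e^{(t-s)\eta(u)}$; iterating identifies all finite-dimensional distributions with those of a L\'evy process started from $\mu$. A shorter route is to invoke Theorem~4.4.1 of \cite{Ethier1986a}, which states that the martingale problem for a Feller generator is well-posed once a core has been exhibited. The delicate point is justifying the limit through $\phi_n \cdot e^{iu\cdot}$, since $e^{iu\cdot}\notin C_0(\R)$; this is routine via bounded convergence applied to the martingale identity together with the uniform bound and pointwise convergence of $\mathcal{A}(\phi_n e^{iu\cdot})$ to $\eta(u)e^{iu\cdot}$, which is verified by a computation analogous to (ii).

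Finally, (iv) is immediate: since $\mathcal{D}\subset D(\mathcal{A})$ and $\mathcal{A}$ coincides on $\mathcal{D}$ with the infinitesimal generator of the Feller semigroup of $L$, Dynkin's formula gives that $h(L_t)-h(L_0)-\int_0^t \mathcal{A}h(L_s)\,\d s$ is an $(\mathcal{F}^L_t)_{t\ge 0}$-martingale for each $h\in\mathcal{D}$, and $\P^{\mu_0}\circ L_0^{-1}=\mu_0$ by assumption.
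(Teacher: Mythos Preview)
Your proposal is correct and matches the paper's approach. The paper's own proof is a two-sentence appeal to general Feller theory: it cites \cite[Thm.~31.5]{Sato1999} for the fact that $(P_t)$ is a positive strongly continuous contraction semigroup on $C_0(\R)$ with $C_c^\infty(\R)$ as a core, and then defers to Example~2.8 of the companion paper \cite{Nr2} (which in turn leans on \cite{Ethier1986a}) for well-posedness of the martingale problem. For (ii) the paper suggests exactly your construction $\phi_n(x)=\phi(x/n)$ with a bump $\phi$ supported in $[-2,2]$ and equal to $1$ on $[-1,1]$. Your write-up is more explicit than the paper's, and your alternative route for (iii) via the complex exponential martingales $M^u$ is not needed here but is precisely the device the paper uses later in Lemma~\ref{lem}.
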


\begin{proof} 
  This follows by general theory (as explained in Example~2.8 of \cite{Nr2}) since the transition semigroup is a positive, strongly continuous contraction semigroup on $C_0(\mathbb{R})$ by \cite[Thm.~31.5]{Sato1999} and since $\mathcal{D}$ is a core (see above) for the infinitesimal generator of $(P_t)_{t \geq 0}$. One could also verify (ii) by hand by taking $\phi \in C_c^\infty(\mathbb{R})$ with $\phi(x) = 1$ for $x \in [-1,1]$ and $\phi(x) = 0$ for $x \notin [-2,2]$ and setting $\phi_n(x) := \phi(x/n)$ for $x \in \mathbb{R}$, $n \in \mathbb{N}$.
\end{proof}

Two objects which are less popular in the study of L\'evy processes, but central for our purposes, are the potential operator and the adjoint, both of which we introduce next. From \cite[Remark~31.10]{Sato1999} or \cite[Thm.~4.1]{Sato1972} it follows that the L\'evy process admits a potential operator. By definition\footnote{More precisely, if $(P_t)_{t \geq 0}$ admits a potential operator, then $V=-\mathcal{A}^{-1}$ and $D(V)$ is dense in $C_0(\mathbb{R})$ by \cite[Thm.~2.3]{Sato1972} and \eqref{eq:potentialDomainChar} holds by definition of $V$ (c.f.\ equation (1.3) in \cite{Sato1972}). In \cite[Thm.~4.1]{Sato1972} it is proved that the semigroup $(P_t)_{t \geq 0}$ associated to $L$ indeed admits a potential operator.}  this means that $\mathcal{A}$ is injective, the domain $D(V):=\{\mathcal{A} f \, : \, f \in D(\mathcal{A})\}$ of the potential operator $V := - \mathcal{A}^{-1}$ is dense in $C_0(\mathbb{R})$ and, for $f, g \in C_0(\mathbb{R})$, 
\begin{equation}\label{eq:potentialDomainChar}
  g \in D(V) \text{ and } V g = f \quad \Longleftrightarrow \quad U^q g \to f \text{ in } C_0(\mathbb{R}) \text{ as } q \to 0.   
\end{equation}

Furthermore, for $t \geq 0$, set $L^*_t := - L_t$. Then $L^*$ is also a L\'evy process, called the dual L\'evy process, and its L\'evy triplet is given by $(\alpha^2,-\gamma,\nu^*)$ where $\nu^*(A):=\nu(\{-x \, : \, x \in A\})$ for $A \in \mathcal{B}(\mathbb{R})$. In other words, the characteristic exponent $\eta^*$ of $L^*$ is given for $u \in \mathbb{R}$ as $\eta^*(u) = \eta(-u)$ where $\eta$ is the characteristic exponent~\eqref{eq:symbol} of $L$. Since $L^*$ is also a L\'evy process, the transition semigroup, resolvent operator,  infinitesimal generator and potential operator have been defined above. We will denote them by $P_t^*$, $(U^q)^*$, $\mathcal{A}^*$ and $V^*$, respectively. \smallskip

For example, we denote by $\mathcal{A}^*$ the infinitesimal generator associated to the dual L\'evy process $L^*$ and refer to it as the dual of $\mathcal{A}$. Recall from the above that $C_0^2(\mathbb{R}) \subset D(\mathcal{A}^*)$ and for $u \in C_0^2(\mathbb{R})$, 
\begin{equation}\label{eq:adjointDef}
  \mathcal{A}^* u (x) = \frac{1}{2} \alpha^2 u''(x) - \gamma u'(x) + \int_{\mathbb{R}\setminus\{0\}}[u(x-y)-u(x)+y u'(x) \mathbbm{1}_{\{|y|\leq 1\}}]\, \nu(\d y), \quad x \in \mathbb{R}.
\end{equation}

\begin{remark}
  The semigroup $(P_t)_{t \geq 0}$ and the operator $\mathcal{A}$ are defined on (a subset of) $C_0(\mathbb{R})$ in the present context. We define $\mathcal{A}^*$ also on $C_0(\mathbb{R})$ (and not on the dual space of $C_0(\mathbb{R})$ as in \cite{Sato1972}). The next lemma justifies the $*$-notation. 
\end{remark}

The following lemma is immediate, it identifies the dual generator $\mathcal A^*$ as the adjoint operator of $\mathcal A$. For the proof of Theorem~\ref{thm:main} we will not need all the cases, but we have included the other ones for completeness.

\begin{lemma}\label{lem:adjoint}
  Suppose $L$ is a L\'evy process with L\'evy triplet $(\alpha^2,\gamma,\nu)$ and denote by  $\mathcal{A}$ its infinitesimal generator $\mathcal{A}\colon D(\mathcal{A}) \to C_0(\mathbb{R})$ and $\mathcal{A}^*\colon D(\mathcal{A}^*) \to C_0(\mathbb{R})$ the infinitesimal generator of $-L$. Then
  \begin{equation}\label{eq:IntegralAdjoint} 
    \int_\mathbb{R} \mathcal{A}f(x)g(x) \dd x = \int_\mathbb{R} f(x) \mathcal{A}^* g (x) \dd x,
  \end{equation}
  for any $f \in D(\mathcal{A})$, $g \in D(\mathcal{A}^*)$ such that either $f, g \in L^1(\mathbb{R})$ or $f, \mathcal{A}f \in L^1(\mathbb{R})$ or $g, \mathcal{A}^*g \in L^1(\mathbb{R})$ or $\mathcal{A}f, \mathcal{A}^*g \in L^1(\mathbb{R})$.
\end{lemma}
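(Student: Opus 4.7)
The plan is to establish the identity first for smooth compactly supported test functions and then extend to each of the four integrability classes by an approximation argument.

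\textbf{Step 1 — smooth compactly supported case.} For $f,g \in C_c^\infty(\R)$, I would substitute the explicit form \eqref{eq:LevyGenerator} of $\mathcal A f$ into $\int \mathcal A f(x)\,g(x)\,\dd x$ and treat each summand separately. The diffusion term $\tfrac{1}{2}\alpha^2\int f''(x)g(x)\,\dd x$ becomes $\tfrac{1}{2}\alpha^2\int f(x)g''(x)\,\dd x$ by two integrations by parts, and the drift term $\gamma\int f'(x)g(x)\,\dd x$ becomes $-\gamma\int f(x)g'(x)\,\dd x$ by one integration by parts; boundary terms vanish thanks to compact support, and the signs match exactly those in \eqref{eq:adjointDef}. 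For the jump piece Fubini applies, because the integrand is dominated by $\tfrac{1}{2}|y|^2\|f''\|_\infty\|g\|_\infty \mathbbm{1}_{\mathrm{supp}\,g}(x)$ for $|y|\le 1$ and by $2\|f\|_\infty\|g\|_\infty \mathbbm{1}_{\mathrm{supp}\,g}(x)$ for $|y|>1$, both integrable against $\dd x\otimes\nu(\d y)$ by $\int(|y|^2\wedge 1)\,\nu(\d y)<\infty$. After exchanging orders, the substitution $x\mapsto x-y$ in $\int f(x+y)g(x)\,\dd x=\int f(x)g(x-y)\,\dd x$ and one further integration by parts in the compensator produce
\[
  \int f(x)\bigl[g(x-y)-g(x)+yg'(x)\mathbbm{1}_{\{|y|\le 1\}}\bigr]\,\dd x.
\]
Reversing Fubini and recognising the result, via $\nu^*(\d y)=\nu(\d(-y))$, as the nonlocal part of $\mathcal A^* g$ in \eqref{eq:adjointDef}, one obtains $\int f(x)\mathcal A^* g(x)\,\dd x$.

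\textbf{Step 2 — approximation.} For a pair $(f,g)$ satisfying any one of the four hypotheses, I would fix a mollifier $(\phi_\varepsilon)_{\varepsilon>0}$ and a cut-off family $\chi_N(x)=\chi(x/N)$ with $\chi\in C_c^\infty(\R)$, $\chi\equiv 1$ near $0$, so that $\|\chi_N^{(k)}\|_\infty=O(N^{-k})$ and $\chi_N\to 1$ locally uniformly; set $f_{N,\varepsilon}:=(\chi_N f)*\phi_\varepsilon$ and $g_{N,\varepsilon}:=(\chi_N g)*\phi_\varepsilon$, both in $C_c^\infty(\R)$. Translation invariance of $L$ makes $P_t$ and hence $\mathcal A$ commute with convolution, so $\mathcal A(h*\phi_\varepsilon)=(\mathcal A h)*\phi_\varepsilon$ for $h\in D(\mathcal A)$, and analogously for $\mathcal A^*$. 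Applying Step~1 to $(f_{N,\varepsilon},g_{N,\varepsilon})$ and sending $\varepsilon\downarrow 0$ and then $N\to\infty$, one passes to the limit by dominated convergence: in each of the four cases the specific hypothesis supplies the $L^1$-factor, while the complementary factor is uniformly bounded, using $D(\mathcal A),D(\mathcal A^*)\subset C_0(\R)\subset L^\infty(\R)$ together with $\mathcal A f,\mathcal A^* g\in C_0(\R)$.

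\textbf{Main obstacle.} The delicate point is the commutator estimate required to replace $\chi_N f$ by $f$, namely $\mathcal A(\chi_N f)-\chi_N\mathcal A f\to 0$ in the mode relevant to each of the four cases. The local piece is handled by the Leibniz rule using $\|\chi_N'\|_\infty,\|\chi_N''\|_\infty\to 0$, while the nonlocal piece decomposes as
\[
  \int\bigl[\chi_N(x+y)-\chi_N(x)-y\chi_N'(x)\mathbbm{1}_{\{|y|\le 1\}}\bigr]f(x+y)\,\nu(\d y)+\chi_N'(x)\!\!\int y[f(x+y)-f(x)]\mathbbm{1}_{\{|y|\le 1\}}\,\nu(\d y),
\]
with the small-$y$ contribution controlled by the Taylor bound $|\chi_N(x+y)-\chi_N(x)-y\chi_N'(x)|\le \tfrac{1}{2}|y|^2\|\chi_N''\|_\infty$ and the large-$y$ contribution handled by pointwise convergence $\chi_N\to 1$ together with the uniform bound $\nu(\{|y|>1\})<\infty$ and dominated convergence. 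Once this commutator control is in hand the limit argument closes in each of the four integrability cases, and the identity \eqref{eq:IntegralAdjoint} follows.
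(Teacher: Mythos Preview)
Step 1 is fine, but there is a genuine gap in Step 2. Your commutator $\mathcal{A}(\chi_N f)-\chi_N\mathcal{A}f$ is not well-defined as written: for a general $f\in D(\mathcal{A})$ one only knows $f\in C_0(\R)$, so $\chi_N f$ need not lie in $D(\mathcal{A})$ at all, and in any case the integro-differential representation \eqref{eq:LevyGenerator} that your decomposition relies on is available only for $C_0^2$-functions. Concretely, the ``Leibniz rule'' for the local piece requires $f'$ and $f''$, and the second nonlocal summand $\chi_N'(x)\int_{|y|\le 1} y[f(x+y)-f(x)]\,\nu(\d y)$ need not converge: for merely continuous $f$ the bracket is only $O(1)$, while the L\'evy measure satisfies just $\int(|y|^2\wedge 1)\,\nu(\d y)<\infty$, not $\int(|y|\wedge 1)\,\nu(\d y)<\infty$. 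Running the commutator instead on the smooth function $f*\phi_\varepsilon$ does not rescue the order of limits you propose ($\varepsilon\downarrow 0$ first, then $N\to\infty$), since the derivative bounds on $f*\phi_\varepsilon$ blow up like $1/\varepsilon$.

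The paper sidesteps the explicit formula for general $f$ altogether. It first proves the case $f,g\in L^1$ directly from the semigroup duality $\int_\R P_tf(x)\,g(x)\dd x=\int_\R f(x)\,P_t^*g(x)\dd x$ (Fubini plus translation invariance) and takes $t\to 0$ by dominated convergence. The case $g,\mathcal{A}^*g\in L^1$ (and symmetrically $f,\mathcal{A}f\in L^1$) then follows from the first case by taking a core sequence $f_n\in C_c^\infty(\R)$ with $f_n\to f$ and $\mathcal{A}f_n\to\mathcal{A}f$ uniformly; the core property is a given fact about the L\'evy generator and replaces any commutator analysis. The last case $\mathcal{A}f,\mathcal{A}^*g\in L^1$ is handled by applying the resolvent duality $\int_\R U^q\tilde f(x)\,\tilde g(x)\dd x=\int_\R\tilde f(x)\,(U^q)^*\tilde g(x)\dd x$ to $\tilde f=\mathcal{A}f$, $\tilde g=\mathcal{A}^*g$ and letting $q\to 0$ via the potential operator $V=-\mathcal{A}^{-1}$.
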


\begin{proof} 
  \textbf{Case 1}, $f, g \in L^1(\mathbb{R})$:  For $t \geq 0$ and $x \in \mathbb{R}$ denote by $P^*$ the transition semigroup of $-L$, i.e. $P^*_t g(x)= \E[g(x-L_t)]$. By Fubini's Theorem $g \in L^1(\mathbb{R})$ implies $P^*_t g \in L^1(\mathbb{R})$ for any $t \geq 0$. By \cite[Chap.~II, Prop.~1]{Bertoin1996} for any $t \geq 0$ it holds that
  \begin{equation}\label{eq:bertoinAdjoint}
    \int_\mathbb{R} P_t f(x) g(x) \dd x = \int_\mathbb{R} f(x) P_t^* g(x) \dd x.
  \end{equation}

  To be precise, in \cite[Chap.~II, Prop.~1]{Bertoin1996} $f$ and $ g$ are assumed non-negative, but by considering positive and negative parts separately and using $g \in L^1(\mathbb{R})$ and $P_t^*g \in L^1(\mathbb{R})$, \cite[Chap.~II, Prop.~1]{Bertoin1996} implies \eqref{eq:bertoinAdjoint}.

  Using the definition of $\mathcal{A}$, $f \in D(\mathcal{A})$ and $g \in L^1(\mathbb{R})$ to apply dominated convergence in the first step and $g \in D(\mathcal{A}^*)$ and $f \in L^1(\mathbb{R})$ in the last step, one obtains
  \begin{align*}
    \int_\mathbb{R} \mathcal{A}f(x)g(x) \dd x 
    & = \lim_{t \to 0} \frac{1}{t} \int_\mathbb{R} (P_t f(x) - f(x)) g(x) \dd x \\ 						&\stackrel{\mathmakebox[\widthof{=}]{\eqref{eq:bertoinAdjoint}}}{=} 
    \lim_{t \to 0} \frac{1}{t} \int_\mathbb{R} f(x) (P_t^* g(x) - g(x)) \dd x \\ 
     & = \int_\mathbb{R} f(x) \mathcal{A}^* g (x) \dd x
  \end{align*}
  and so \eqref{eq:IntegralAdjoint} has been established under the assumption $f, g \in L^1(\mathbb{R})$.\smallskip
  
  \textbf{Case 2},  $f, \mathcal{A}f \in L^1(\mathbb{R})$ or $g, \mathcal{A}^*g \in L^1(\mathbb{R})$: 
  Suppose $g, \mathcal{A}^*g \in L^1(\mathbb{R})$, the other case can be treated by the same argument. Since $C_c^\infty(\mathbb{R})$ is a core for $\mathcal{A}$, there exists  $\{f_n\}_{n \in \mathbb{N}} \subset C_c^\infty(\mathbb{R})$ with $\lim_{n \to \infty} f_n = f$ and $\lim_{n \to \infty} \mathcal{A}f_n = \mathcal{A} f$ in $C_0(\mathbb{R})$, i.e. uniformly. But $f_n, g \in L^1(\mathbb{R})$ and so \eqref{eq:IntegralAdjoint} holds for $f_n$ and $g$ for any $n \in \mathbb{N}$. Furthermore, the assumptions $g, \mathcal{A}^*g \in L^1(\mathbb{R})$ allow us to apply dominated convergence and so \eqref{eq:IntegralAdjoint} also holds for $f$ and $g$, as desired.\smallskip
  
  \textbf{Case 3}, $\mathcal{A}f$, $\mathcal{A}^*g \in L^1(\mathbb{R})$:
  For the proof of the last part, denote by $V$ and $V^*$ the potential operators associated to $\mathcal{A}$ and $\mathcal{A}^*$, respectively. We claim that for any $\tilde{f} \in D(V)$, $\tilde{g} \in D(V^*)$ with $\tilde{f}, \tilde{g} \in L^1(\mathbb{R})$ it holds that
  \begin{equation}\label{eq:adjointPotential}
    \int_\mathbb{R} V \tilde{f}(x) \tilde{g}(x) \dd x = \int_\mathbb{R} \tilde{f}(x) V^* \tilde{g} (x) \dd x. 
  \end{equation}
  Once this is established, we may set $\tilde{f}:=\mathcal{A}f$, $\tilde{g} := \mathcal{A}^*g$ and apply \eqref{eq:adjointPotential} to deduce \eqref{eq:IntegralAdjoint}.

  So assume $\tilde{f} \in D(V)$, $\tilde{g} \in D(V^*)$ and $\tilde{f}, \tilde{g} \in L^1(\mathbb{R})$.   
  For $q > 0$ and $x \in \mathbb{R}$ denote by $(U^q)^*$ the resolvent operator of $-L$, i.e. $(U^q)^* \tilde{g} (x)= \int_0^\infty e^{-q t} P^*_t \tilde{g} (x) \dd t$ where $P^*$ is the transition semigroup of $-L$ as above.
  By Fubini's Theorem, $\tilde{g} \in L^1(\mathbb{R})$ implies $(U^q)^* \tilde{g}  \in L^1(\mathbb{R})$ for any $q > 0$. 
  By \cite[Chap.~II, Prop.~1]{Bertoin1996} for any $q > 0$ it holds that
  \begin{equation}\label{eq:bertoinAdjointResolvent}
    \int_\mathbb{R} U^q \tilde{f}(x) \tilde{g} (x) \dd x = \int_\mathbb{R} \tilde{f}(x) (U^q)^* \tilde{g} (x) \dd x.
  \end{equation}
  
  By the same argument as in Case 2, \cite[Chap.~II, Prop.~1]{Bertoin1996} implies \eqref{eq:bertoinAdjointResolvent}.
      
  Using \eqref{eq:potentialDomainChar}, $\tilde{f} \in D(V)$ and $\tilde{g} \in L^1(\mathbb{R})$ one may let $q \to 0$ and apply dominated convergence to obtain that the left-hand side of \eqref{eq:bertoinAdjointResolvent} converges to the left-hand side of \eqref{eq:adjointPotential} and analogously for the right-hand side. Thus \eqref{eq:adjointPotential} is indeed established. 
\end{proof}

\subsection{Fokker-Planck equations with time-dependent coefficients}\label{sec:Fokker-Planck}

The proof of Theorem~\ref{thm:main} relies on time-change arguments and uniqueness results for time-inhomogeneous Fokker-Planck equations as developed in the accompanying paper~\cite{Nr2}. To facilitate the reading of the present article we now collect the results that we need, but we refer the reader to \cite{Nr2} for the proofs.

Recall that $\mathcal A$ is the generator of a one-dimensional L\'evy process $L$ defined on a probability space~$(\Omega,\mathcal{F},\P^{\mu_0})$ and $L$ is adapted to the complete and right-continuous filtration~$(\mathcal{F}_t)$. As outlined in Section~\ref{sec:sketch}, we aim at constructing a time-change~$\delta$ characterized by the random Carath\'eodory differential equation 
\begin{equation}\label{eq:ode time-change}
  \delta(s) = \int_0^s \sigma(r,L_{\delta(r)}) \dd r, \quad s\in [0,\thor],
\end{equation}
such that the marginal distributions of the time-changed process $X_s:=L_{\delta(s)}$ are given by the linear interpolation~\eqref{eq:ph}. We will write $\sigma$ in the form $\sigma(t,x) = H(x) / \phi(t,x) = H(x)\tilde{\sigma}(t,x) $ as in~\eqref{eq:sigma}, where $H$ is the solution to the Poisson equation~\eqref{eq:poisson eq}, $\phi$ is defined as in~\eqref{eq:ph} and $\tilde \sigma(t,x):=1/\phi(t,x)$. In this factorisation form we can distinguish assumptions on the L\'evy process (captured in $H$) and assumptions on the densities $h_0, h_1$ (captured in $\tilde \sigma$).\smallskip

In the following assumptions on the involved objects are formulated, which guarantee the solvability of equation~\eqref{eq:ode time-change} and which are sufficient to prove existence and uniqueness results for the Fokker-Planck equation associated to the operator $\sigma \mathcal{A}$. Lemma~\ref{lem:levyFeller} above shows that $L$, $\mathcal{D}:=C_c^\infty(\mathbb{R})$ and $\mathcal A$ defined by \eqref{eq:LevyGenerator} indeed fall within the framework of \cite{Nr2}. We divide the assumptions in such a way that allows to distinguish as good as possible between assumptions on the L\'evy process~$L$ and the ingredients coming form the densities~$h_0$, $h_1$:

\begin{enumerate}[label= A.\arabic*]
  \item\label{ass:sigma} \textbf{Regularity of \boldmath$\sigma$:}
        Let $\sigma \colon [0,\infty) \times \R \to [0,\infty)$   be of the form $\sigma (t,x):= H(x) \tilde{\sigma}(t,x)$ for $(t,x)\in[0,\infty) \times \R$ with $\tilde{\sigma}(t,x) \equiv 0$ for $t> \thor$ and such that
        \begin{enumerate}
          \item[(i)] $H \colon \R \to [0,\infty)$ is measurable,
          \item[(ii)] $\tilde{\sigma} \colon [0,\thor] \times \R \to (0,\infty)$ is measurable and satisfies the following: for each compact set  $K \subset \R$ there exists $C_1,C_2,C_3 >0$ such that 
          \begin{equation*}
            |\tilde{\sigma}(t,x) - \tilde{\sigma}(s,x)|  \leq C_1 |t - s|\quad \text{and}\quad C_2 \leq \tilde{\sigma}(t,x) \leq  C_3,
          \end{equation*}
          for all $s,t \in [0,1]$ and for all $x \in K$.
        \end{enumerate}
  \item\label{ass:SigmaBounded} \textbf{Boundedness of \boldmath$\sigma$:}
       $\sigma \colon [0,\infty) \times \R \to [0,\infty)$ is bounded.
  \item\label{ass:Hregular} \textbf{Regularity of \boldmath$H$:} 
        Let $H \colon \R \to [0,\infty)$ measurable. Assume that for any $x \in \R$, $H$ is regular for $P$ in the sense of Definition~\ref{def:regular}, where $P$ is the law on $D_{\R}[0,\infty)$ of $L$ under $\P^{\delta_x}$.
\end{enumerate}

\subsubsection{Time-inhomogeneous random time-changes}

The next proposition ensures that the random Carath\'eodory differential equation~\eqref{eq:ode time-change} indeed provides a suitable time-change~$\delta$ under regularity assumption of $\sigma$ and $H$. Moreover, it verifies that $(\delta(s))_{s\in[0,\thor]}$ are stopping times with respect to the filtration generated by the L\'evy process $L$.

\begin{proposition}\label{prop:timechange}
  Assume that 
  \begin{itemize}
    \item $\sigma = H \tilde{\sigma}$ is given as in Assumption~\ref{ass:sigma},
    \item $H$ is regular for $P$ in the sense of Definition~\ref{def:regular}, where $P$ the law on $D_{\R}[0,\infty)$ of $L$ under $\P^{\mu_0}$,
    \item Assumption~\ref{ass:SigmaBounded} holds.
  \end{itemize}
  Then the family of random times $(\delta(s))_{s \in [0,\thor]}$ given by
     \begin{equation}\label{eq:stoppingTime2} 
    \delta(s) :=  \inf\{t \in [0,\rho)\,:\, \Delta(t) \geq s\} \wedge \rho, \quad s\in [0,\thor],
  \end{equation} 
  is well-defined, where $\Delta$ is the unique solution to the Carath\'eodory differential equation 
  \begin{equation}\label{eq:Delta} 
    \Delta(t) = \int_0^t \sigma(\Delta(r),L_r)^{-1} \dd r,\quad t \in [0, \delta (\thor))
  \end{equation}
  and 
    \begin{equation}\label{eq:rho}
    \rho := \inf \left\lbrace s \in [0,\infty)\,:\,\int_0^s H(L_u)^{-1} \dd u =\infty \right\rbrace .
  \end{equation}
 Furthermore
  \begin{enumerate}
    \item[(i)] $\delta \colon [0,\thor] \to [0,\infty)$ is non-decreasing and absolutely continuous, $\P^{\mu_0}$-a.s.,  
    \item[(ii)] $\delta (\thor)$ is finite, $\P^{\mu_0}$-a.s., 
    \item[(iii)] $\delta$ solves the Carath\'eodory differential equation~\eqref{eq:ode time-change},
    \item[(iv)] For any $s \in [0,\thor]$, $\delta(s)$ is an $(\mathcal{F}_t)$-stopping time.
  \end{enumerate}
\end{proposition}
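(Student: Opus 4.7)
The plan is to argue pathwise: fix $\omega$, solve the Carathéodory ODE \eqref{eq:Delta} up to the explosion time, and then define $\delta$ as its right-continuous inverse. The key observation is that, thanks to the factorisation $\sigma(t,x)=H(x)\tilde\sigma(t,x)$, the singularities of $1/\sigma$ in the $x$-variable are concentrated on $\{H=0\}$, so on $[0,\rho)$ the map $r\mapsto 1/\sigma(y,L_r(\omega))$ is locally bounded (in $y$ on compacts) because $L$ has RCLL paths and $\tilde\sigma$ is bounded above and below on compacts by \ref{ass:sigma}(ii). Together with the Lipschitz continuity of $\tilde\sigma$ in its first argument (hence of $1/\tilde\sigma$), classical Carathéodory theory yields a unique absolutely continuous solution $\Delta$ on any interval $[0,T]\subset[0,\rho)$; patching these gives a unique, strictly increasing, absolutely continuous $\Delta$ on $[0,\rho)$.

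Next I would show the blow-up $\lim_{t\uparrow\rho}\Delta(t)=+\infty$, which is what makes $\delta(s)$ well-defined as a hitting level up to $\rho$. On $\{\rho<\infty\}$ the RCLL path $\{L_r:r\in[0,\rho]\}$ lies in a compact set, so $\tilde\sigma(\Delta(r),L_r)\le C_3$ uniformly on $[0,\rho)$, giving
\begin{equation*}
  \Delta(t)\ge \frac{1}{C_3}\int_0^t H(L_r)^{-1}\dd r,
\end{equation*}
which tends to $+\infty$ as $t\uparrow\rho$ by the regularity of $H$ in the sense of Definition~\ref{def:regular}. On $\{\rho=\infty\}$ I use Assumption~\ref{ass:SigmaBounded}: since $\sigma\le M$ for some constant $M$, we have $\Delta'\ge 1/M$, hence $\Delta(t)\ge t/M$. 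Both cases together show that $\Delta$ eventually crosses any level $s\in[0,\thor]$, and in particular $\delta(\thor)\le M\wedge\rho<\infty$, yielding (ii). Monotonicity and absolute continuity in (i) are then inherited from $\Delta$ via the inverse relationship.

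For (iii), on the complement of a null set the inverse function theorem for absolutely continuous, strictly increasing functions gives $\delta'(s)=1/\Delta'(\delta(s))=\sigma(\Delta(\delta(s)),L_{\delta(s)})=\sigma(s,L_{\delta(s)})$ at Lebesgue-a.e.\ $s$ with $\delta(s)<\rho$; combined with $\sigma(s,\cdot)\equiv 0$ for $s>\thor$ being irrelevant because we only work on $[0,\thor]$, an integration recovers \eqref{eq:ode time-change}. For (iv), the stopping-time property follows from the identity
\begin{equation*}
  \{\delta(s)\le t\}=\{t\ge\rho\}\cup\{\Delta(t\wedge\rho)\ge s\},
\end{equation*}
valid by construction of $\delta$ as the hitting time of level $s$ by $\Delta$, once one observes that both $\rho$ and the random variable $\Delta(t\wedge\rho)$ are $\mathcal{F}_t$-measurable; the latter because $\Delta(t\wedge\rho)$ is built as an integral functional of the path $(L_r)_{r\le t}$ via \eqref{eq:Delta} solved pathwise.

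The main obstacle is the careful handling at the boundary $\rho$: one must ensure that the ODE does not blow up strictly before $\rho$ (uniqueness and non-explosion of $\Delta$ off the zero set of $H$), that $\Delta$ does blow up exactly at $\rho$ (forcing $\delta$ to be continuous and non-decreasing through $\rho$ via the $\wedge\rho$ truncation), and that $\delta(s)$ inherits measurability from the pathwise construction uniformly in $s$. The factorisation $\sigma=H\tilde\sigma$ combined with \ref{ass:sigma}--\ref{ass:Hregular} is designed precisely to disentangle these two effects, which is what makes the argument tractable.
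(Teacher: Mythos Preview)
The paper does not prove this proposition here; it is imported from the companion article \cite{Nr2} (see the remark at the end of Section~\ref{sec:sketch} and the opening of Section~\ref{sec:Fokker-Planck}), so there is no in-paper proof to compare against. Your pathwise strategy---solve the Carath\'eodory ODE for $\Delta$ on $[0,\rho)$ using the factorisation $\sigma=H\tilde\sigma$, then invert---is the natural one and is essentially sound. The local integrability of $r\mapsto H(L_r)^{-1}$ on $[0,\rho)$ (which follows from regularity of $H$) together with the local Lipschitz/boundedness properties of $1/\tilde\sigma$ in \ref{ass:sigma}(ii) indeed puts you in the Carath\'eodory framework, and the lower bound $\Delta'\ge 1/\|\sigma\|_\infty$ from \ref{ass:SigmaBounded} gives $\delta(\thor)\le\|\sigma\|_\infty<\infty$ directly.

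Two points need tightening. First, the phrase ``$\lim_{t\uparrow\rho}\Delta(t)=+\infty$'' is not correct as stated: since $\tilde\sigma(t,\cdot)\equiv 0$ for $t>\thor$, the ODE \eqref{eq:Delta} is only meaningful while $\Delta\in[0,\thor]$, so $\Delta$ never exceeds $\thor$ on its domain of definition. What your estimates actually prove is the contrapositive: if $\Delta(t)<\thor$ for all $t<\rho$, then your lower bound forces $\Delta(t)\to\infty$, a contradiction; hence $\Delta$ reaches level $\thor$ at some $\delta(\thor)<\rho$. Second, in (iv) your identity uses $\Delta(t\wedge\rho)$, but $\Delta$ is only defined on $[0,\delta(\thor))$, so for $t\in[\delta(\thor),\rho)$ this is undefined and the identity breaks down. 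A clean fix is to replace ``$\Delta(t\wedge\rho)\ge s$'' by the event ``the Carath\'eodory solution on $[0,t\wedge\rho)$ reaches level $s$'', which is $\mathcal F_t$-measurable since the pathwise ODE uses only $(L_r)_{r\le t}$; alternatively, prove (iv) for $s<\thor$ first and obtain $\delta(\thor)=\sup_n\delta(\thor-1/n)$ via right-continuity of $(\mathcal F_t)$.
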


Additionally, we present a useful condition for verifying the regularity of $H$ (cf. Definition~\ref{def:regular}) as required in order to apply Proposition~\ref{prop:timechange}.

\begin{proposition}\label{prop:regular}
  Let $\mu_0 \in \mathcal{P}(\R)$, denote by $P$ the law on $D_{\R}[0,\infty)$ of $L$ under $\P^{\mu_0}$ and let $H \in D(\mathcal{A})$ with $H \geq 0$. Then $H$ is regular for $P$.
\end{proposition}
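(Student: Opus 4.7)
The definition of regularity has two parts. Writing $\rho = \inf\{s \geq 0 : H(Z_s) = 0\}$, these are: (a) $H(Z_\rho) = 0$ on $\{\rho < \infty\}$, and (b) $\rho$ is the blow-up time of the additive functional $s \mapsto \int_0^s H(Z_u)^{-1}\dd u$. My plan is to dispose of (a) quickly and then prove (b) as the two inequalities ``$\geq$'' and ``$\leq$''. I freely identify $Z$ under $P$ with $L$ under $\P^{\mu_0}$ throughout.

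Part (a) is immediate: since $H \in D(\mathcal{A}) \subset C_0(\R)$ is continuous, by definition of infimum either $H(Z_\rho) = 0$ already or there is a sequence $s_n \downarrow \rho$ with $H(Z_{s_n}) = 0$; right-continuity of $Z$ combined with continuity of $H$ yields $H(Z_\rho) = 0$.

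For the ``$\geq$'' half of (b), I would show $\int_0^s H(Z_u)^{-1}\dd u < \infty$ for $s < \rho$ by arguing that the infimum of the RCLL function $u \mapsto H(Z_u)$ on the compact interval $[0,s]$ is strictly positive. The only obstruction is a vanishing left-limit $H(Z_{u-}) = 0$ at some $u \leq s$, corresponding to the path approaching $\{H = 0\}$ and jumping away before entering it. For a L\'evy process, jumps are totally inaccessible and the position $Z_{u-}$ at a jump time is determined by the continuous component of the path independently of the jump size; a standard compensation/exception-set argument therefore rules out this ``near-miss'' scenario before $\rho$ almost surely.

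The main obstacle is the ``$\leq$'' half of (b): for every $s > \rho$ one must show $\int_0^s H(Z_u)^{-1}\dd u = \infty$. By the strong Markov property at $\rho$ together with part (a), this reduces to showing that, starting from any $x^\ast$ with $H(x^\ast) = 0$, the integral $\int_0^\epsilon H(L_u)^{-1}\dd u$ is infinite $\P^{\delta_{x^\ast}}$-almost surely for every $\epsilon > 0$. Since this event lies in $\mathcal{F}_{0+}$, Blumenthal's 0--1 law reduces matters to ruling out probability zero. Here I would invoke Dynkin's formula applied to $H \in D(\mathcal{A})$, namely
\[
\E^{\delta_{x^\ast}}[H(L_t)] = \E^{\delta_{x^\ast}}\!\left[\int_0^t \mathcal{A}H(L_s)\dd s\right] \leq \|\mathcal{A}H\|\, t,
\]
which uses boundedness of $\mathcal{A}H \in C_0(\R)$ and $H(x^\ast) = 0$. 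Combining this linear-in-$t$ expectation bound with Tonelli's theorem applied to $\E^{\delta_{x^\ast}}\!\big[\int_0^\epsilon H(L_u)^{-1}\dd u\big]$ and suitable concentration estimates for $L_t$ near $x^\ast$ for small $t$ forces the desired divergence. The delicate step is matching the rate at which $H(L_t)$ can escape from $0$ (controlled by $\mathcal{A}H$) against the non-integrability threshold of $1/H$, a balance that requires the full strength of $H \in D(\mathcal{A})$ beyond mere continuity.
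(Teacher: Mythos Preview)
The paper does not actually prove this proposition: Section~2.3 explicitly states that the results collected there, including Proposition~\ref{prop:regular}, are taken from the companion article~\cite{Nr2} and that the reader is referred there for proofs. So there is no in-paper argument to compare against, and I can only assess your sketch on its own terms.

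Part~(a) and your overall decomposition are fine. For the ``$\geq$'' half of (b) your instinct is right but the phrase ``a standard compensation/exception-set argument'' is not an argument. A clean way to finish is to set $\tau_n := \inf\{t : H(L_t) \leq 1/n\}$; these are stopping times increasing to some $\tau_\infty \leq \rho$, and quasi-left continuity of $L$ (which holds for any Feller process) together with continuity of $H$ gives $H(L_{\tau_\infty}) = \lim_n H(L_{\tau_n}) = 0$ on $\{\tau_\infty < \infty\}$, whence $\tau_\infty = \rho$. Then for $s < \rho$ one has $s < \tau_N$ for some $N$, so $H(L_u) > 1/N$ on $[0,s]$ and the integral is finite. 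No analysis of jump times is needed.

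The genuine gap is in the ``$\leq$'' half. Your Dynkin bound $\E^{x^\ast}[H(L_t)] \leq \|\mathcal{A}H\|\,t$ combined with Jensen gives $\E^{x^\ast}[H(L_u)^{-1}] \geq (\|\mathcal{A}H\|\,u)^{-1}$ and hence, via Tonelli, $\E^{x^\ast}\bigl[\int_0^\epsilon H(L_u)^{-1}\dd u\bigr] = \infty$. But infinite expectation does \emph{not} imply almost-sure divergence, and Blumenthal's 0--1 law only tells you the probability is $0$ or $1$---it does nothing to exclude the value $0$. Your phrase ``suitable concentration estimates \ldots\ forces the desired divergence'' is exactly the place where the real work has to happen, and you have supplied none. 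It is perfectly consistent with everything you wrote that $\int_0^\epsilon H(L_u)^{-1}\dd u$ be almost surely finite yet have infinite mean. Closing this step requires a genuinely different mechanism---for instance an argument through the time-changed process and the martingale problem for $H\mathcal{A}$, or a sharper pathwise control on $H(L_t)$ exploiting $H \in D(\mathcal{A})$ beyond the first-moment level---and that mechanism is absent from your proposal. As written, the sketch is incomplete at precisely the point you yourself identify as delicate.
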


\subsubsection{Existence and uniqueness for the Fokker-Planck equation associated to $\sigma \mathcal{A}$}

The non-decreasing stopping times constructed in Proposition~\ref{prop:timechange} can be used to define the time-changed process $X_s:=L_{\delta(s)}$, $s \in [0,1]$. The next result shows that the marginal distributions of $X$ satisfy the Fokker-Planck equation associated to the operator $\sigma \mathcal{A}$.

\begin{proposition}\label{prop:density time changed}
  Let $\mu_0 \in \mathcal{P}(\R)$ and let $\sigma$, $\delta$ be given as in Proposition~\ref{prop:timechange}. For $s \geq 0$, denote by $p(s,\cdot)$ the law of $X_s = L_{\delta(s)}$, where $\delta(s):= \delta(\thor)$ for $s > \thor$. Then one has: 
  \begin{itemize}
    \item[(i)] for any $g \in B([0,\infty)\times \R)$ the function 
          \begin{equation}\label{eq:measurabilityKolmogorovEqn}
            s \mapsto \int_E g(s,x)\, p(s,\d x) \text{ is measurable.}
         \end{equation} 
    \item[(ii)] $(p(s,\d x))_{s \in [0,\thor]}$ satisfies the Fokker-Planck equation, i.e. for any $f \in C_c^\infty(\mathbb{R})$,
          \begin{equation}\label{eq:KolmogorovEqn}
            \int_{\R} f(x)\, p(t,\d x) - \int_{\R} f(x) \,\mu_0(\d x) = \int_0^t \int_{\R} \sigma(s,x) \mathcal{A}f(x)\, p(s,\d x) \dd s,\quad t \in [0,\thor].
          \end{equation}
  \end{itemize}
\end{proposition}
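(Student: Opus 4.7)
The overall plan is to combine the RCLL-martingale problem for $(\mathcal{A},\mu_0)$ (Lemma~\ref{lem:levyFeller}(iv)) with optional stopping at the bounded stopping times $\delta(s)$ from Proposition~\ref{prop:timechange}, followed by a change of variables along $\delta$. For part (i), the continuity of $s \mapsto \delta(s)(\omega)$ (Proposition~\ref{prop:timechange}(i)) combined with the RCLL paths of $L$ shows that $(s,\omega) \mapsto L_{\delta(s)(\omega)}(\omega)$ is jointly measurable (by a standard approximation of $\delta(s)$ from above by dyadic step functions and the right-continuity of $L$), and hence $s \mapsto \E^{\mu_0}[g(s, L_{\delta(s)})] = \int_\R g(s,x)\,p(s,\dd x)$ is measurable by Fubini.

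For part (ii), fix $f \in C_c^\infty(\R) \subset D(\mathcal{A})$. By Lemma~\ref{lem:levyFeller}(iv),
\begin{equation*}
  M^f_t := f(L_t) - f(L_0) - \int_0^t \mathcal{A}f(L_r)\,\dd r, \quad t \geq 0,
\end{equation*}
is an $(\mathcal{F}_t)$-martingale with bounded integrand, since $f$ has compact support and $\mathcal{A}f \in C_0(\R)$. For each $s \in [0,1]$, Proposition~\ref{prop:timechange}(iv) provides that $\delta(s)$ is a stopping time; the deterministic bound $\delta(s) \leq s\,\|\sigma\|_\infty$, read off from Assumption~\ref{ass:SigmaBounded} and \eqref{eq:ode time-change}, makes it bounded, so optional stopping yields
\begin{equation*}
  \E^{\mu_0}[f(L_{\delta(s)})] - \E^{\mu_0}[f(L_0)] = \E^{\mu_0}\!\left[\int_0^{\delta(s)} \mathcal{A}f(L_r)\,\dd r\right].
\end{equation*}

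The next step is a change of variables $r = \delta(u)$ on the inner integral. Since $\delta$ is non-decreasing and absolutely continuous on $[0,1]$ with $\delta'(u) = \sigma(u, L_{\delta(u)})$ for a.e.\ $u$ (by \eqref{eq:ode time-change}), the classical substitution formula for monotone absolutely continuous functions gives
\begin{equation*}
  \int_0^{\delta(s)} \mathcal{A}f(L_r)\,\dd r = \int_0^s \mathcal{A}f(L_{\delta(u)})\,\sigma(u, L_{\delta(u)})\,\dd u = \int_0^s \sigma(u, X_u)\,\mathcal{A}f(X_u)\,\dd u.
\end{equation*}
Inserting this into the optional-stopping identity, taking the expectation inside via Fubini (legitimate because $\sigma \cdot \mathcal{A}f$ is bounded), and recognising $\E^{\mu_0}[\,\cdot\,(X_u)] = \int_\R (\,\cdot\,)(x)\,p(u,\dd x)$ produces \eqref{eq:KolmogorovEqn}.

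The main subtlety, and where I expect the technical work to concentrate, is the change-of-variables step when $\delta$ plateaus. Plateaus occur on $\{\delta(u) = \rho\}$, and on these the factorization $\sigma = H\tilde\sigma$ together with the definition of $\rho$ in \eqref{eq:rho} forces $\sigma(u, L_{\delta(u)}) = 0$, so the $u$-integrand vanishes; correspondingly, the regularity of $H$ (Definition~\ref{def:regular}) guarantees that the $r$-integral on the left accumulates no mass beyond $\rho$, making both sides compatible. Making this compatibility rigorous — together with the joint-measurability bookkeeping needed for Fubini in (i) — is where the care lies; the underlying structure (martingale problem + optional stopping + substitution) is then transparent.
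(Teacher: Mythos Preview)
The paper does not supply its own proof of this proposition: Section~2.3 explicitly imports Propositions~\ref{prop:timechange}, \ref{prop:regular}, \ref{prop:density time changed} and Theorem~\ref{thm:KolmogorovUniqueness} from the companion article~\cite{Nr2} and refers the reader there for the arguments. So there is nothing in the present paper to compare your attempt against.

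That said, your outline is the natural one and is correct. Optional stopping of the $(\mathcal{A},\mu_0)$-martingale at the bounded stopping time $\delta(s)$ (the bound $\delta(s)\le s\|\sigma\|_\infty$ from Assumption~\ref{ass:SigmaBounded} and \eqref{eq:ode time-change} is exactly right), followed by the change of variables $r=\delta(u)$, gives \eqref{eq:KolmogorovEqn} after Fubini. One small simplification: you do not need to argue separately about plateaus. The change-of-variables formula for a non-decreasing absolutely continuous $\delta$ and a bounded Borel integrand~$g$,
\[
\int_0^{\delta(s)} g(r)\,\dd r \;=\; \int_0^s g(\delta(u))\,\delta'(u)\,\dd u,
\]
already handles them: on any interval where $\delta$ is constant one has $\delta'=0$ a.e., so the $u$-integrand vanishes automatically, matching the fact that the $r$-integral does not advance. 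The regularity of $H$ is only needed to ensure that $\delta'(u)=\sigma(u,L_{\delta(u)})$ genuinely equals zero on $\{\delta(u)=\rho<\infty\}$ (via $H(L_\rho)=0$ from Definition~\ref{def:regular}), which keeps \eqref{eq:ode time-change} consistent there; it is not an extra ingredient for the substitution itself. For part~(i), your joint-measurability argument via dyadic approximation of $\delta(s)$ and right-continuity of $L$ is standard and sufficient.
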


Finally, we provide a uniqueness result for the Fokker-Planck equation~\eqref{eq:KolmogorovEqn} associated to the operator $\sigma \mathcal{A}$. In other words, the sufficient conditions provided in Theorem~\ref{thm:KolmogorovUniqueness} below guarantee that the one-dimensional marginal laws of $X_\cdot= L_{\delta(\cdot)}$, (which satisfy \eqref{eq:KolmogorovEqn} by Proposition~\ref{prop:density time changed}) are the only family of probability measures that satisfy~\eqref{eq:KolmogorovEqn} for all $f \in C_c^\infty(\R)$.

\begin{theorem}\label{thm:KolmogorovUniqueness} 
  Suppose $\sigma= H \tilde{\sigma}$ satisfies Assumptions~\ref{ass:sigma}, \ref{ass:SigmaBounded} and~\ref{ass:Hregular}. Then uniqueness holds for \eqref{eq:KolmogorovEqn}: 
  If $(q(t,\cdot))_{0\leq t \leq \thor}$ and $(p(t,\cdot))_{0\leq t \leq \thor}$ are two families of probability measures on $\R$ which both satisfy \eqref{eq:measurabilityKolmogorovEqn} and \eqref{eq:KolmogorovEqn} for all $f \in C_c^\infty(\mathbb{R})$ and $q(0,\cdot) = \mu_0 = p(0,\cdot)$, then $q(s,\cdot) = p(s,\cdot)$ for all $s \in [0,\thor]$, where $\mu_0 \in \mathcal{P}(\R)$.
\end{theorem}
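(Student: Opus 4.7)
The plan is to reduce uniqueness for the measure-valued Fokker-Planck equation to well-posedness of the associated time-inhomogeneous martingale problem, and then to reduce that in turn, via an inverse time-change, to the time-homogeneous L\'evy martingale problem for $(\mathcal{A},\mu_0)$, which is already known to be well-posed by Lemma~\ref{lem:levyFeller}(iii).

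First, given any solution $(p(t,\cdot))_{t\in[0,\thor]}$ to~\eqref{eq:KolmogorovEqn}, I would invoke a superposition principle adapted to the jump-type generator $\sigma \mathcal{A}$ — in the spirit of Figalli--Trevisan for continuous SDEs, with the extension to L\'evy-type operators developed in the companion paper~\cite{Nr2} — to lift the marginals to a probability measure $\mathbb{Q}$ on $D_{\R}[0,\thor]$ under which the coordinate process $Z$ satisfies $Z_t \sim p(t,\cdot)$ for every $t$ and
\[ f(Z_t) - f(Z_0) - \int_0^t \sigma(s,Z_s)\,\mathcal{A}f(Z_s)\dd s, \quad t \in [0,\thor],\; f \in C_c^\infty(\R), \]
is an $(\mathcal{F}^Z_t)$-martingale under $\mathbb{Q}$. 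Uniqueness of $(p(t,\cdot))$ then follows from uniqueness in law of $Z$ under any such $\mathbb{Q}$, i.e.\ from well-posedness of the time-inhomogeneous martingale problem for $(\sigma \mathcal{A},\mu_0)$.

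To establish this uniqueness in law, I would carry out the inverse time-change implicit in Proposition~\ref{prop:timechange}. Under Assumption~\ref{ass:sigma}, $\tilde{\sigma}$ is bounded above and below on compacts and Lipschitz in time, while Assumption~\ref{ass:Hregular} guarantees that $H$ is regular for the law of $Z$; combined with boundedness of $\sigma$ (Assumption~\ref{ass:SigmaBounded}), this makes
\[ \Delta(t) := \int_0^t \sigma(s,Z_s)^{-1}\dd s, \qquad \delta(s) := \inf\{t \geq 0 : \Delta(t) > s\}, \]
well-defined up to the explosion time $\rho := \inf\{t : H(Z_t)=0\}$, and $\delta$ genuinely inverts $\Delta$. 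Applying optional sampling to the martingale above at the stopping times $\delta(s)$ and changing variables $u = \delta(r)$ inside the compensator shows that $\tilde{Z}_s := Z_{\delta(s)}$ solves the RCLL-martingale problem for the time-homogeneous L\'evy generator $(\mathcal{A},\mu_0)$. Hence by Lemma~\ref{lem:levyFeller}(iii) the law of $\tilde Z$ is uniquely determined, and inverting the time-change via $Z_t = \tilde{Z}_{\Delta(t)}$ pins down the law of $Z$ under $\mathbb{Q}$, in particular its one-dimensional marginals.

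The main obstacle is the first step, the superposition principle: producing a genuine RCLL process on Skorokhod space whose marginals realise a given measure-valued solution of~\eqref{eq:KolmogorovEqn}. For continuous diffusions this rests on mimicking arguments plus a martingale identification, but in the present jump-diffusion setting the non-local part of $\mathcal{A}$, together with the possible degeneracy of $\sigma$ on the zero set of $H$, requires a more delicate construction — precisely the kind of result developed in~\cite{Nr2}. A secondary subtlety lies in the behaviour near $\rho$: Assumption~\ref{ass:Hregular} is exactly what ensures that $Z$ is absorbed consistently at $\rho$ in both time scales, so that $\Delta$ and $\delta$ define mutually inverse bijections on paths up to the lifetime and the two martingale problems match up cleanly.
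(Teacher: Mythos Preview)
The paper itself does not prove this theorem: it is quoted from the companion article \cite{Nr2}, and the paper explicitly refers the reader there for the proof. So there is no in-paper argument to compare your proposal against directly. That said, your two-step strategy---lift a measure-valued solution of \eqref{eq:KolmogorovEqn} to a solution of the time-inhomogeneous martingale problem for $\sigma\mathcal{A}$ via a superposition principle, then undo the time-change to land on the well-posed L\'evy martingale problem---is exactly the natural route and is almost certainly what \cite{Nr2} does.

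There is, however, a concrete error in your time-change step. If $Z$ solves the martingale problem for $\sigma\mathcal{A}$, the additive functional you must invert to recover a process with generator $\mathcal{A}$ is
\[
  A(t) \;=\; \int_0^t \sigma(s,Z_s)\,\d s,
\]
not $\int_0^t \sigma(s,Z_s)^{-1}\,\d s$. Indeed, with $\gamma := A^{-1}$ one has $\gamma'(u) = 1/\sigma(\gamma(u),Z_{\gamma(u)})$, and the change of variables $r=\gamma(v)$ in the compensator $\int_0^{\gamma(u)}\sigma(r,Z_r)\mathcal{A}f(Z_r)\,\d r$ produces exactly $\int_0^u \mathcal{A}f(\tilde Z_v)\,\d v$. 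With your $\Delta(t)=\int_0^t \sigma(s,Z_s)^{-1}\,\d s$ and $\delta=\Delta^{-1}$, the same computation yields a compensator $\int_0^s \sigma(\delta(v),\tilde Z_v)^2\,\mathcal{A}f(\tilde Z_v)\,\d v$, so $\tilde Z$ does \emph{not} solve the L\'evy martingale problem. (Equivalently: in the notation of Proposition~\ref{prop:timechange}, the forward change $\delta$ satisfies $\delta(s)=\int_0^s\sigma(r,L_{\delta(r)})\,\d r=\int_0^s\sigma(r,Z_r)\,\d r$ once one substitutes $Z_r=L_{\delta(r)}$; it is this $\delta$, expressed through $Z$, that you need to invert.) Note also that $A(t)$ is automatically finite by Assumption~\ref{ass:SigmaBounded}, whereas your $\Delta$ blows up at $\rho$---another hint that the roles are swapped. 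Once the direction is fixed, the remaining issues you flag (lifetime matching at $\rho$ via Assumption~\ref{ass:Hregular}, and the measurable reconstruction of $Z$ from $\tilde Z$ via the ODE for the time-change) are the right ones to worry about.
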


\section{Proofs}

In this section we prove Theorem~\ref{thm:main}. The exposition is structured as follows: Firstly, in Section~\ref{sec:potential} preliminary results on L\'evy processes and the associated Poisson equation are presented. Secondly, in Section~\ref{sec:necessity} it is proved that \eqref{BJ2} is indeed necessary for the existence of a finite mean Skorokhod embedding. Finally, in Section~\ref{sec:sufficiency} it is proved that $\tau$ in Theorem~\ref{thm:main} is a finite mean solution to the Skorokhod embedding problem (SEP), thereby also proving sufficiency of~\eqref{BJ2}.

\subsection{The Poisson Equation for L\'evy Processes}\label{sec:potential}

In this section we lay the foundation for the proof of Theorem~\ref{thm:main}. As sketched in Section~\ref{sec:sketch}, it is crucial to understand the solvability of the Poisson equation $\mathcal A^*H=h_1-h_0$ and properties of the solution $H$. As sketched below Remark~\ref{remark}, the Poisson equation for L\'evy processes can be tackled with Fourier transforms. Throughout this section we take $\mu_0:=\delta_0$ and set $\P:=\P^{\mu_0}$.

\subsubsection{Solving the Poisson Equation using the Fourier Transform}\label{poisson}

By definition of the potential operator $V^* = - (\mathcal{A}^*)^{-1}$ in the Subsection~\ref{subsec:levyprelim}, for $g \in D(V^*)$ the function $H = - V^* g $ is the unique solution to the Poisson equation
\begin{equation}\label{eq:ellipticEqLevy} 
  \mathcal{A}^* H =g
\end{equation}
in $C_0(\mathbb{R})$. In this section we study the solvability of \eqref{eq:ellipticEqLevy} and further properties of solutions. 
 \smallskip

The first proposition justifies the heuristic given in the introduction below Remark~\ref{remark} and, hence, the occurrence of the function $H$ in Theorem~\ref{thm:main}. Note that the appearing assumption $g\in L^1(\R)$ will not pose any restriction as in our applications $g=h_1-h_0$ and $h_0,h_1$ are probability densities.

\begin{proposition}\label{p1}
  If $g \in C_0(\mathbb{R}) \cap L^1(\mathbb{R})$ and $\xi \mapsto \frac{\hat{g}(\xi)}{\eta(\xi)} \in L^1(\mathbb{R})$, then there is a unique solution $H \in D(\mathcal A^*) \subset C_0(\R)$ to the Poisson equation $\mathcal A^*H=g$ and 
  \begin{equation}\label{eq:potentialFourierRepres}
    H(x)= \frac{1}{2 \pi} \int_\mathbb{R} \frac{\hat{g}(\xi)}{\eta(\xi)}e^{-i x \xi} \dd \xi , \quad x \in \mathbb{R}.
  \end{equation} 
\end{proposition}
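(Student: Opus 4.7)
The plan is to construct the candidate $H$ directly from the inverse Fourier formula, show it belongs to $C_0(\mathbb{R})$, and then identify $H$ with $-V^{\ast} g$ through the resolvent characterization \eqref{eq:potentialDomainChar}. Uniqueness is essentially free: the potential operator $V^{\ast}=-(\mathcal{A}^{\ast})^{-1}$ exists (this was recalled in Subsection~\ref{subsec:levyprelim}), so $\mathcal{A}^{\ast}$ is injective on $D(\mathcal{A}^{\ast})$, which forces any two $C_0$-solutions of $\mathcal{A}^{\ast}H=g$ to coincide.

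For existence, I would define $H$ by \eqref{eq:potentialFourierRepres}. Since $\widehat{g}/\eta\in L^1(\mathbb{R})$ by hypothesis, the Riemann--Lebesgue lemma immediately gives $H\in C_0(\mathbb{R})$. To identify $H$ as $-V^{\ast}g$, the strategy is to prove $(U^q)^{\ast}g\to -H$ in $C_0(\mathbb{R})$ as $q\downarrow 0$; then \eqref{eq:potentialDomainChar} applied to the dual process yields $g\in D(V^{\ast})$ and $V^{\ast}g=-H$, so that $H=-V^{\ast}g\in D(\mathcal{A}^{\ast})$ with $\mathcal{A}^{\ast}H=g$.

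The uniform convergence $(U^q)^{\ast}g\to -H$ is where the Fourier analysis is carried out. Because $g\in L^1(\mathbb{R})$ and $\mathrm{Re}(\eta)\le 0$, Fubini shows that $(U^q)^{\ast}g$ lies in $L^1(\mathbb{R})\cap C_0(\mathbb{R})$ with Fourier transform $\widehat{g}(\xi)/(q-\eta(\xi))$. The elementary inequality
\[
|q-\eta(\xi)|^2=q^2-2q\,\mathrm{Re}(\eta(\xi))+|\eta(\xi)|^2\ge |\eta(\xi)|^2
\]
dominates this Fourier transform pointwise by $|\widehat{g}/\eta|\in L^1$, so Fourier inversion applies to $(U^q)^{\ast}g$ and, since $\widehat{g}/\eta\in L^1$, also to $H$. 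Subtracting the two inversion formulas gives
\[
(U^q)^{\ast}g(x)+H(x)=\frac{1}{2\pi}\int_{\mathbb{R}} e^{-ix\xi}\,\frac{q\,\widehat{g}(\xi)}{\eta(\xi)(q-\eta(\xi))}\,\mathrm{d}\xi,
\]
whose integrand is bounded pointwise by $|\widehat{g}/\eta|$ and tends to $0$ almost everywhere as $q\downarrow 0$. Dominated convergence then delivers $\|(U^q)^{\ast}g+H\|_\infty\to 0$, which is the input required by \eqref{eq:potentialDomainChar}.

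The main obstacle is the careful verification that $(U^q)^{\ast}g$ has the claimed Fourier representation and that the inversion formula can be used to obtain a \emph{uniform} (not merely $L^1$ or a.e.) estimate on $(U^q)^{\ast}g+H$. Once the $L^1$ dominant $|\widehat{g}/\eta|$ is in place, the convergence is uniform in $x$ because the bound is independent of $x$, and the rest of the argument is then a direct application of the potential-theoretic identity $V^{\ast}=-(\mathcal{A}^{\ast})^{-1}$.
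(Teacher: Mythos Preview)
Your proposal is correct and follows essentially the same route as the paper: define $H$ by the inverse Fourier formula, compute $\widehat{(U^q)^\ast g}=\hat g/(q-\eta)$, use $\mathrm{Re}(\eta)\le 0$ to get the domination $|q-\eta|\ge|\eta|$, apply Fourier inversion on both sides, and conclude $(U^q)^\ast g\to -H$ uniformly via dominated convergence so that \eqref{eq:potentialDomainChar} yields $V^\ast g=-H$. The only cosmetic difference is that you combine the two Fourier kernels into $q/(\eta(q-\eta))$ and bound it by $|\hat g/\eta|$ using $q\le|q-\eta|$, whereas the paper bounds $|1/\eta+1/(q-\eta)|$ by $2/|\eta|$ directly; the argument and its ingredients are otherwise identical.
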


\begin{proof}
  We start with some preliminary facts that do not use the assumption of the proposition. Note that $g \in C_0(\mathbb{R}) \cap L^1(\mathbb{R})$ implies $g \in L^\infty(\mathbb{R})\cap L^1(\mathbb{R})$. Hence, one can use Fubini's Theorem to see that for any $q>0$, $(U^q)^* g (x)= \int_0^\infty e^{-q t} P^*_t g (x) \dd t$ is in $L^1(\mathbb{R})$. Taking the Fourier transform we obtain (see e.g. \cite[Chap.~I, Prop.~9]{Bertoin1996}) for any $q > 0$
  \begin{equation}\label{eq:ResolventFourier}
    \widehat{(U^q)^* g}(\xi) = (q - \eta(\xi))^{-1} \hat{g}(\xi),  \quad \xi \in \mathbb{R}.
  \end{equation}
  By \eqref{eq:charFctLevy} it holds that $|e^{\eta(\xi)}| = |\E[e^{i \xi L_1}]| \leq 1$ and thus 
  \begin{equation}\label{eq:symbolNegRealPart}
    \mathrm{Re}(\eta(\xi)) \leq 0 \quad \text{ for any }  \xi \in \mathbb{R},
  \end{equation}
  where $\mathrm{Re}(z)$ denotes the real part of $z \in \mathbb{C}$. In particular, the right-hand side of \eqref{eq:ResolventFourier} is indeed well-defined and
  \begin{equation}\label{eq:symbolEstimate1}
    \forall \xi \in \mathbb{R}, q >0 \,:\, |\eta(\xi)| \leq |q - \eta(\xi)|.
  \end{equation}
  By assumption, the inverse Fourier transform of $\xi \mapsto \frac{\hat{g}(\xi)}{\eta(\xi)}$, given by $H$ in \eqref{eq:potentialFourierRepres}, is well-defined. Furthermore, by \eqref{eq:symbolEstimate1} and our  assumption, for any $q > 0$ the right-hand side of \eqref{eq:ResolventFourier} is integrable and so, as $(U^q)^* g \in C_0(\mathbb{R}) \cap L^1(\mathbb{R})$ and $\widehat{(U^q)^* g} \in L^1(\mathbb{R})$, by Fourier inversion and \eqref{eq:ResolventFourier}, $(U^q)^*g$ can be represented as 
  \begin{equation}\label{eq:auxEq25}
    (U^q)^* g(x) = \frac{1}{2\pi} \int_\mathbb{R} \frac{\hat{g}(\xi) e^{-i x \xi}}{q - \eta(\xi)} \dd \xi, \quad x \in \mathbb{R}.
  \end{equation}
  Thus, one has
  \[\begin{aligned} \sup_{x \in \mathbb{R}} |-H(x)-(U^q)^* g(x)| & = \sup_{x \in \mathbb{R}} \left| \frac{1}{2 \pi} \int_\mathbb{R} \hat{g}(\xi)e^{-i x \xi}\left(\frac{1}{\eta(\xi)} + \frac{1}{q-\eta(\xi)} \right) \dd \xi \right| \\ & \leq \frac{1}{2 \pi} \int_\mathbb{R} |\hat{g}(\xi)|\left|\frac{1}{\eta(\xi)} + \frac{1}{q-\eta(\xi)}  \right| \dd \xi.  \end{aligned} \]
  However, by \eqref{eq:symbolEstimate1} the last integrand is bounded from above by $\xi \mapsto 2 \hat{g}(\xi) / \eta(\xi)$, which is integrable by assumption, and so we can let $q \to 0$ and apply dominated convergence to conclude $(U^q)^* g \to -H$ as $q \to 0$ in $C_0(\mathbb{R})$. By \eqref{eq:potentialDomainChar} (for the dual L\'evy process $L^*$), this implies $g \in D(V^*) $ and $V^* g = -H$ or, in other words, $\mathcal A^*H=g$.
\end{proof}

In the next proposition we show that under the assumptions of Theorem~\ref{thm:main}, it holds that $H \in D(\mathcal{A}^*)\cap D(\mathcal{A})$. This property will be crucial to guarantee uniqueness for the time-change in the proof of sufficiency of Theorem~\ref{thm:main} and forces us to assume that the densities $h_0, h_1$ are ``sufficiently smooth'' in the next sections.

\begin{proposition}\label{prop:levyregularaux}
    Suppose $h_0$, $h_1$ are as in Theorem~\ref{thm:main} and \eqref{BJ2} holds with $H$ as in \eqref{b}. Then $H \in D(\mathcal{A}) \cap D(\mathcal{A}^*)$.
\end{proposition}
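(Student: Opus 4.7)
The plan is to establish $H\in D(\mathcal A^*)$ and $H\in D(\mathcal A)$ separately. For the former I apply Proposition~\ref{p1} to $g:=h_1-h_0$. The function $g$ lies in $L^1(\R)$ since $h_0,h_1$ are probability densities, and it lies in $C_0(\R)$ in each type: directly from Assumption~\ref{ass:regularity} when $L$ is of type~S or~0, and by Riemann--Lebesgue applied to $g$ viewed as the inverse Fourier transform of $\widehat{h_1}-\widehat{h_0}\in C_c(\R)\subset L^1(\R)$ when $L$ is of type~D. Condition~\eqref{BJ2} supplies $\widehat g/\eta\in L^1(\R)$, so Proposition~\ref{p1} gives $H\in D(\mathcal A^*)$ with $\mathcal A^*H=g$.

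For $H\in D(\mathcal A)$ I proceed by cases. When $L$ is of type~S it is symmetric, so $-L\stackrel{d}{=}L$, the generators coincide ($\mathcal A^*=\mathcal A$), and the claim reduces to the previous step. When $L$ is of type~0 or~D, the plan is a Fourier/dominated-convergence argument verifying $t^{-1}(P_tH-H)\to g$ in $C_0(\R)$ as $t\downarrow 0$, where
\[
g(x):=\frac{1}{2\pi}\int_\R \eta(-\xi)\widehat H(\xi)\,e^{-ix\xi}\,\d\xi.
\]
Two preparatory observations underpin this: first, since $L$ is real-valued, $\overline{\eta(\xi)}=\eta(-\xi)$, hence $|\eta(-\xi)|=|\eta(\xi)|$, which cancels the denominator in $\widehat H$ to give $|\eta(-\xi)\widehat H(\xi)|=|\widehat{h_1}(\xi)-\widehat{h_0}(\xi)|$; second, $\widehat{h_1}-\widehat{h_0}\in L^1(\R)$ in both remaining types (for type~0 because integration by parts together with $h_i''\in L^1$ yields $|\widehat{h_i}(\xi)|\le \min(1,\|h_i''\|_{L^1}/\xi^2)$; for type~D directly by the compact-support assumption). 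Thus $\eta(-\cdot)\widehat H\in L^1(\R)$, and $g\in C_0(\R)$ by Riemann--Lebesgue.

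To conclude, using the identity $\widehat{P_tH}(\xi)=e^{t\eta(-\xi)}\widehat H(\xi)$, Fourier inversion yields
\[
\big\|t^{-1}(P_tH-H)-g\big\|_\infty \le \frac{1}{2\pi}\int_\R \Big|\tfrac{e^{t\eta(-\xi)}-1}{t}-\eta(-\xi)\Big|\,|\widehat H(\xi)|\,\d\xi.
\]
The integrand tends pointwise to zero as $t\downarrow 0$ and, by the elementary inequality $|e^z-1|\le|z|$ valid whenever $\mathrm{Re}(z)\le 0$ (applicable by~\eqref{eq:symbolNegRealPart}), is dominated by $2|\eta(-\xi)||\widehat H(\xi)|=2|\widehat{h_1}-\widehat{h_0}|(\xi)\in L^1(\R)$. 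Dominated convergence then delivers $H\in D(\mathcal A)$ with $\mathcal A H=g$. I expect the main obstacle to be precisely this integrability verification: Assumption~\ref{ass:regularity} is tailored so that $\widehat{h_1}-\widehat{h_0}\in L^1$ in each regime, and the identity $|\eta(-\xi)|=|\eta(\xi)|$ is the essential cancellation that removes $\eta$ from the dominating function, without which no integrability hypothesis on $\eta$ alone would suffice.
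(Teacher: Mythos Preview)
Your proof is correct and follows essentially the same approach as the paper: Proposition~\ref{p1} for $H\in D(\mathcal A^*)$, symmetry for type~S, and for types~0/D a Fourier computation showing $t^{-1}(P_tH-H)$ converges uniformly to the inverse Fourier transform of $\eta(-\xi)\widehat H(\xi)=\widehat g(\xi)\eta(-\xi)/\eta(\xi)$, with dominated convergence using the bound $2|\widehat{h_1}-\widehat{h_0}|$ obtained via the cancellation $|\eta(-\xi)|=|\eta(\xi)|$. The paper's argument is the same (its candidate limit is denoted $f$ rather than your reused $g$, and it writes out the Fubini step for $P_tH$ explicitly instead of citing $\widehat{P_tH}=e^{t\eta(-\cdot)}\widehat H$), and it likewise relies on $\widehat{h_1}-\widehat{h_0}\in L^1(\R)$ for types~0 and~D.
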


\begin{proof}
    Set $g:=h_1-h_0$.  By Proposition~\ref{p1}, $H \in D(\mathcal{A}^*)$ and so we only need to verify $H \in D(\mathcal{A})$. If $L$ is of type S, then it is symmetric. In particular $\mathcal{A}=\mathcal{A}^*$ and hence the claim. If $L$ is of type 0 or D, then (as established in the proof of Lemma~\ref{lemma} below), $\hat{g} \in L^1(\R)$ and we now show that this implies $H \in D(\mathcal{A})$. 
    
   Since the complex conjugate of $\eta(\xi)$ is given by $\eta(-\xi)$ for all $\xi \in \R$ and since $\hat{g} \in L^1(\R)$, the function 
  \[ 
    f(x):=  \frac{1}{2 \pi} \int_\mathbb{R} \frac{\hat{g}(\xi)\eta(-\xi)}{\eta(\xi)}e^{-i x \xi} \dd \xi , \quad x \in \mathbb{R}, 
  \]
  is well-defined and, by the Riemann-Lebesgue Theorem, $f \in C_0(\R)$. Inserting~\eqref{eq:potentialFourierRepres} in the definition, applying Fubini's Theorem and using~\eqref{eq:charFctLevy} and the definition of $f$ yields
  \begin{align*}
    \sup_{x \in \R} \left|\frac{1}{t}( P_t H(x) - H(x) ) - f(x) \right| & =  \sup_{x \in \R} \left|\frac{1}{t}(\E[H(L_t+x)] - H(x) ) - f(x) \right| \\
    & = \sup_{x \in \R} \left| \frac{1}{2 \pi t} \int_\mathbb{R} \frac{\hat{g}(\xi)}{\eta(\xi)}(\E[e^{- i  L_t \xi}]-1)e^{-i x \xi} \dd \xi - f(x) \right| \\
    & \leq \frac{1}{2 \pi} \int_\mathbb{R} \frac{|\hat{g}(\xi)|}{|\eta(\xi)|}\left|\frac{e^{ t \eta(-\xi)}-1}{t} - \eta(-\xi) \right| \dd \xi, 
  \end{align*} 
  which tends to $0$ as $t \downarrow 0$, by dominated convergence. By definition, this implies $H \in D(\mathcal{A})$ and $\mathcal{A} H = f$.
  To see that dominated convergence can be applied, recall \eqref{eq:symbolNegRealPart} and so for all $\xi \in \R$, $t > 0$,
  \[
    \frac{|\hat{g}(\xi)|}{|\eta(\xi)|}\left|\frac{e^{ t \eta(-\xi)}-1}{t} - \eta(-\xi) \right| \leq 2 |\hat{g}(\xi)|.
  \] 
\end{proof}
The rest of this section may be skipped on first reading, all following propositions are not needed for the proof of Theorem \ref{thm:main}.\smallskip

We give conditions on $g$ and $\eta$ so that Proposition \ref{p1} implies the existence of the solution $H$, conditions that imply $H\in L^1(\R)$ and that $H$ is Lipschitz continuous. For future applications, those might be useful to verify the conditions of Theorem \ref{thm:main}.

\begin{proposition}
  Assume the non-degeneracy condition $\eta(u) \neq 0$ for all $ u \neq 0$ (i.e. $L$ is non-lattice) and either $\nu \neq 0$ or $\alpha \neq 0$.
  If $g \in C_0(\mathbb{R}) \cap L^1(\mathbb{R})$, $x \mapsto x^i g(x) \in L^1(\mathbb{R})$ for $i = 1,2$,
  \begin{equation}\label{eq:momentsVanish}
    \int_\mathbb{R} g(x) x^j \dd x = 0 , \quad j =0,1,
  \end{equation}
  and there exists $R > 0$ such that 
  \begin{equation}\label{eq:integrableAtInfinityNew} 
    \int_{|\xi|>R} \frac{|\hat{g}(\xi)|}{|\eta(\xi)|} \dd \xi < \infty, 
  \end{equation}
  then $\xi\mapsto\frac{\hat{g}(\xi)}{\eta(\xi)} \in L^1(\R)$.
\end{proposition}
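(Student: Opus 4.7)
The assumption $\int_{|\xi|>R}|\hat g(\xi)/\eta(\xi)|\dd\xi<\infty$ handles the tail directly. On the compact set $\{\delta\leq|\xi|\leq R\}$, the non-degeneracy of $\eta$ combined with continuity yields $\inf_{\delta\leq|\xi|\leq R}|\eta(\xi)|>0$, while $\hat g$ is bounded by $\|g\|_{L^1(\R)}$; hence this middle region contributes a finite integral. The whole content of the proof reduces to showing that $\hat g/\eta$ is integrable on a neighbourhood $(-\delta,\delta)$ of the origin for some small $\delta>0$.

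\textbf{Matching $\xi^2$-bounds near zero.} Two inequalities will suffice. On the numerator side, combining the vanishing moments $\int_\R g(x)\dd x=\int_\R x g(x)\dd x=0$ with the elementary estimate $|e^{it}-1-it|\leq t^2/2$ gives
\[ \hat g(\xi)=\int_\R g(x)\bigl(e^{i\xi x}-1-i\xi x\bigr)\dd x,\qquad |\hat g(\xi)|\leq \tfrac{\xi^2}{2}\int_\R x^2|g(x)|\dd x, \]
so $|\hat g(\xi)|\leq C\xi^2$ for a finite constant $C$, using the assumed integrability of $x^2 g$. On the denominator side, $|\eta(\xi)|\geq|\mathrm{Re}(\eta(\xi))|=\tfrac{\alpha^2}{2}\xi^2+\int_\R(1-\cos\xi y)\,\nu(\dd y)$. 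If $\alpha\neq 0$, the first summand already delivers $|\eta(\xi)|\geq\tfrac{\alpha^2}{2}\xi^2$. If instead $\alpha=0$ and $\nu\neq 0$, use $\nu(\{0\})=0$ to pick a compact set $I\subset\R\setminus\{0\}$ with $m:=\inf_{y\in I}|y|>0$, $M:=\sup_{y\in I}|y|<\infty$, and $\nu(I)>0$; then for $|\xi|\leq 1/M$ the bound $1-\cos t\geq t^2/4$, valid on $|t|\leq 1$, yields $\int_\R(1-\cos\xi y)\,\nu(\dd y)\geq \tfrac{m^2\nu(I)}{4}\xi^2=:c\xi^2$.

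\textbf{Conclusion and main obstacle.} With $|\hat g(\xi)|\leq C\xi^2$ and $|\eta(\xi)|\geq c\xi^2$ on a small $(-\delta,\delta)$, the ratio is bounded by $C/c$ there, hence integrable; combined with the two other regions this yields $\hat g/\eta\in L^1(\R)$. The main obstacle I anticipate is the lower bound $|\eta(\xi)|\gtrsim\xi^2$ as $\xi\to 0$ in the case $\alpha=0$: the argument hinges on locating an interval of positive $\nu$-mass bounded away from the origin and exploiting the quadratic behaviour of $1-\cos$ at zero, which is exactly the place where the hypotheses $\nu\neq 0$ and $\nu(\{0\})=0$ are used in a quantitative way.
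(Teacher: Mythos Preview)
Your proof is correct and follows the same overall architecture as the paper's: reduce to a neighbourhood of the origin, then match a quadratic upper bound on $|\hat g|$ against a quadratic lower bound on $|\eta|$. The execution, however, is more self-contained than the paper's. For the numerator, the paper invokes differentiability of characteristic functions (via \cite[Prop.~2.5(ix)]{Sato1999}) and then Taylor-expands $\hat g$ at $0$, whereas you obtain $|\hat g(\xi)|\leq \tfrac{1}{2}\xi^2\int x^2|g|\,\dd x$ directly from the remainder bound $|e^{it}-1-it|\leq t^2/2$. For the denominator, the paper appeals to a non-degeneracy estimate for characteristic functions \cite[Prop.~24.19]{Sato1999} to get $-\mathrm{Re}\,\eta(\xi)\geq C\xi^2$ near $0$, while you read this off the L\'evy--Khintchine formula itself, using $1-\cos t\geq t^2/4$ on a compact $I\subset\R\setminus\{0\}$ with $\nu(I)>0$. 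Both routes yield the same boundedness of $\hat g/\eta$ at the origin; yours avoids the external references at the cost of the small case split $\alpha\neq 0$ versus $\alpha=0,\ \nu\neq 0$, which is exactly where you correctly identified the ``main obstacle''.
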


\begin{proof}
  First note that in the present one-dimensional setup, the law of $L_1$ is degenerate (in the sense of \cite[Def.~24.16]{Sato1999}) if and only if there exists $a \in \mathbb{R}$ with $L_1 = a$, $\P$-a.s.  Since we have assumed $\alpha \neq 0$ or $\nu \neq 0$, this is not the case here (see \cite[Thm.~24.3]{Sato1999}). In particular, we may apply \cite[Prop.~24.19]{Sato1999} and obtain that there exist $\varepsilon' > 0$ and $c > 0$ such that 
  \begin{equation}\label{eq:auxEq20}
    |\E[e^{i \xi L_1}]| \leq 1 - c |\xi|^2  \quad \text{ for } |\xi| < \varepsilon'.
  \end{equation}
  By \eqref{eq:charFctLevy}, the left-hand side of \eqref{eq:auxEq20} is greater or equal than $e^{\mathrm{Re}(\eta(\xi))}$ and thus there exist $C>0$ and $\varepsilon >0$ such that 
  \begin{equation}\label{eq:auxEq21}
    - \mathrm{Re} (\eta(\xi)) \geq - \log(1-c|\xi|^2) \geq C |\xi|^2 
  \end{equation}
  for all $\xi \in B_\varepsilon(0)$. 
  
  On the other hand, for $g \neq 0$ (if $g = 0$, then the claim trivially holds), we may decompose $g = g^+ - g^- $ with $g^+\geq 0$, $g^- \geq 0$. Setting $c_0 := \int_\mathbb{R} g^+(x) \dd x$, \eqref{eq:momentsVanish} with $i=0$ implies $c_0 = \int_\mathbb{R} g^-(x) \dd x$ and thus $c_0 > 0$. Setting $h_1:=g^+ /c_0$ and $h_0:= g^- / c_0$, both $h_0$ and $h_1$ are probability densities and so we may apply \cite[Prop.~2.5~(ix)]{Sato1999} to $h_0$ and $h_1$ to obtain that, by our moment assumptions, $\hat{g} \in C^2(\mathbb{R})$ and, by \eqref{eq:momentsVanish}, $\hat{g}(0) = \hat{g}'(0) = 0$. Taylor expanding around $0$, we therefore obtain
  \begin{equation}\label{eq:auxEq22}
    |\hat{g}(\xi)| \leq C_0 \xi^2 
  \end{equation}
  for some $C_0 > 0$ and all $\xi \in B_\varepsilon(0)$. Combining \eqref{eq:auxEq21} and \eqref{eq:auxEq22} yields
  \begin{equation*} 
    |\hat{g}(\xi)|  \leq C_0 \xi^2 \leq   - \mathrm{Re}(\eta(\xi)) \frac{C_0}{C} \leq \frac{C_0}{C} |\eta(\xi)| 
  \end{equation*}
  for all $\xi \in B_\varepsilon(0)$ and thus $\xi\mapsto\frac{\hat{g}(\xi)}{\eta(\xi)}$ is locally bounded at zero. Since $\eta(u) \neq 0$ for $u \neq 0$ and $\hat{g}$ and $\eta$ are continuous, it follows that $\xi\mapsto\frac{\hat{g}(\xi)}{\eta(\xi)}$ is bounded on any compact subset of $\R$. Combining this with \eqref{eq:integrableAtInfinityNew} yields the claim.
\end{proof}

\begin{proposition}
  Suppose $g$ and $H$ are as in Proposition~\ref{p1}. If in addition for some $R > 0$,
  \begin{equation}\label{eq:integrableAtInfinity}
    \int_{|\xi|>R} \frac{|\xi| |\hat{g}(\xi)|}{|\eta(\xi)|} \dd \xi < \infty,
  \end{equation}
  then $H$ is Lipschitz continuous.
\end{proposition}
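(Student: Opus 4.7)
The plan is to show Lipschitz continuity directly from the Fourier representation \eqref{eq:potentialFourierRepres} by exploiting the elementary bound $|e^{-ix\xi}-e^{-iy\xi}|\le |x-y|\,|\xi|$, so the only real task is to verify that $\xi\mapsto \xi\,\hat g(\xi)/\eta(\xi)$ is integrable on all of $\R$.

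First I would split the integration domain at the threshold $R$ appearing in \eqref{eq:integrableAtInfinity}. On the tail $\{|\xi|>R\}$, integrability of $\xi\mapsto \xi\,\hat g(\xi)/\eta(\xi)$ is exactly the content of the additional hypothesis \eqref{eq:integrableAtInfinity}. On the compact set $\{|\xi|\le R\}$ the extra factor $|\xi|\le R$ is bounded, and the assumption $\xi\mapsto\hat g(\xi)/\eta(\xi)\in L^1(\R)$ inherited from Proposition~\ref{p1} therefore gives the required integrability on this piece as well. Combining both estimates yields
\begin{equation*}
  C := \frac{1}{2\pi}\int_\R \frac{|\xi|\,|\hat g(\xi)|}{|\eta(\xi)|}\,\d\xi < \infty.
\end{equation*}

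Next, for arbitrary $x,y\in\R$, I would subtract the Fourier representations of $H(x)$ and $H(y)$ from Proposition~\ref{p1}, apply the triangle inequality under the integral and use $|e^{-ix\xi}-e^{-iy\xi}|\le |x-y|\,|\xi|$ (which follows from $|e^{ia}-e^{ib}|\le |a-b|$) to obtain
\begin{equation*}
  |H(x)-H(y)| \le \frac{1}{2\pi}\int_\R \frac{|\hat g(\xi)|}{|\eta(\xi)|}\,|e^{-ix\xi}-e^{-iy\xi}|\,\d\xi \le C\,|x-y|.
\end{equation*}
This is exactly Lipschitz continuity of $H$ with constant $C$.

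No step here is genuinely difficult: the only subtlety is handling the behaviour of $\hat g/\eta$ near the origin, where $\eta$ vanishes. That is already absorbed into the standing assumption of Proposition~\ref{p1} that $\hat g/\eta\in L^1$, so multiplying by the bounded factor $|\xi|\le R$ in a neighbourhood of zero presents no new issue. The only additional quantitative input compared to Proposition~\ref{p1} is the tail estimate \eqref{eq:integrableAtInfinity}, which is precisely what pdfupgrades the pointwise existence of $H$ to uniform Lipschitz control.
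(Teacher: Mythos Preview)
Your proof is correct and follows essentially the same approach as the paper: both establish $\int_\R |\xi|\,|\hat g(\xi)|/|\eta(\xi)|\,\d\xi<\infty$ by splitting at $R$, and then use the elementary bound $|e^{ia}-e^{ib}|\le|a-b|$ to derive the Lipschitz estimate. Your argument is in fact slightly more direct, since you work with the Fourier representation \eqref{eq:potentialFourierRepres} of $H$ itself, whereas the paper first estimates $|(U^q)^* g(x)-(U^q)^* g(y)|$ via \eqref{eq:auxEq25} and then passes to the limit $q\to 0$ using \eqref{eq:potentialDomainChar}.
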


\begin{proof}
  By assumption,
  \[  \int_{|\xi|\leq R} \frac{|\xi| |\hat{g}(\xi)|}{|\eta(\xi)|} \dd \xi \leq  R \int_\R \frac{|\hat{g}(\xi)|}{|\eta(\xi)|} \dd \xi < \infty\] 
  and combining this with \eqref{eq:integrableAtInfinity} yields 
  \begin{equation}\label{eq:auxEq24}
    L := \int_\mathbb{R} \frac{|\xi| |\hat{g}(\xi)|}{|\eta(\xi)|} \dd \xi < \infty.
  \end{equation}
  
  On the other hand, precisely as in the proof of Proposition~\ref{p1} we may apply Fourier inversion to write, for any $q > 0$, $(U^q)^*$ as \eqref{eq:auxEq25}.
  Using $|e^{i u} - e^{i v}| \leq |u - v|$ for $u,v \in \mathbb{R}$ yields 
  \begin{equation*}\begin{aligned}
    |(U^q)^* g(x) - (U^q)^* g(y)| \stackrel{\eqref{eq:auxEq25}}{=}&  \frac{1}{2\pi}\left| \int_\mathbb{R} \frac{\hat{g}(\xi)(e^{-i x \xi}-e^{-i y \xi})}{q - \eta(\xi)} \dd \xi \right| \leq  \frac{1}{2 \pi} |x - y| \int_\mathbb{R} \frac{|\xi| |\hat{g}(\xi)|}{|q - \eta(\xi)|} \dd \xi \\ \stackrel{\eqref{eq:symbolEstimate1}}{\leq}& \frac{1}{2 \pi} |x - y| \int_\mathbb{R} \frac{|\xi| |\hat{g}(\xi)|}{| \eta(\xi)|} \dd \xi \stackrel{\eqref{eq:auxEq24}}{=} \frac{L}{2 \pi} |x - y|
  \end{aligned}\end{equation*}
  for any $q > 0$ and $x, y \in \mathbb{R}$. Letting $q \to 0$ and using that $(U^q)^* g \to V^* g$ pointwise (even in $C_0(\mathbb{R})$) by \eqref{eq:potentialDomainChar}, this last estimate implies the result.
\end{proof}

The next result shows that if a solution of the Poisson equation exists (e.g. if the conditions of Proposition~\ref{p1} hold, but here we impose a slightly weaker assumption), then $H \geq 0$ implies $H \in L^1(\R)$. This is useful for verifying the conditions of Theorem~\ref{thm:main}.

\begin{proposition}
  If $g \in C_0(\mathbb{R}) \cap L^1(\mathbb{R})$, $\xi\mapsto\frac{\hat{g}(\xi)}{\eta(\xi)}$ is locally bounded at zero,  there is a solution $H \in C_0(\R)$ to the Poisson equation $\mathcal A^*H=g$ and $H \geq 0$, then $H\in L^1(\R)$.
\end{proposition}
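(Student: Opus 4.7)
My plan is to establish a uniform bound on $\int H\psi_\epsilon\,dx$ for a suitable family of nonnegative mollifiers $\psi_\epsilon \to 1$ pointwise and then appeal to Fatou's lemma using $H \geq 0$. The key idea is to choose $\psi_\epsilon$ so that $\hat{\psi_\epsilon}$ is compactly supported in the region where $\hat g/\eta$ is controlled by the hypothesis; Plancherel will then reduce the estimate to a Fourier integral concentrated near the origin.

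Concretely, using the local boundedness assumption, fix $\delta, M > 0$ with $|\hat g(\xi)/\eta(\xi)| \leq M$ for $|\xi| \leq \delta$ (note that continuity of $\eta$ at $0$ with $\eta(0)=0$ forces $\hat g(0)=0$). I would choose a nonnegative, symmetric $\psi \in L^1(\R) \cap L^\infty(\R)$ with $\psi(0)=1$ and $\hat\psi$ nonnegative, in $L^1(\R)$, supported in $[-\delta, \delta]$ (e.g.\ a suitably normalized Fej\'er-type triangular kernel). Set $\psi_\epsilon(x) := \psi(\epsilon x)$ for $\epsilon \in (0,1]$; then $\psi_\epsilon \geq 0$, $\psi_\epsilon \to 1$ pointwise as $\epsilon \downarrow 0$, and $\hat{\psi_\epsilon}(\xi) = \epsilon^{-1}\hat\psi(\xi/\epsilon)$ is supported in $[-\epsilon\delta, \epsilon\delta] \subset [-\delta, \delta]$.

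To identify $\int H\psi_\epsilon\,dx$, I would exploit $H = -V^*g$ and \eqref{eq:potentialDomainChar} to get $(U^q)^*(-g) \to H$ uniformly in $C_0(\R)$ as $q \to 0^+$; since $\psi_\epsilon \in L^1$, this gives $\int H \psi_\epsilon\,dx = \lim_{q \downarrow 0}\int (U^q)^*(-g)\psi_\epsilon\,dx$. Because $(U^q)^*(-g), \psi_\epsilon \in L^1 \cap L^\infty \subset L^2$, Plancherel together with \eqref{eq:ResolventFourier} yields
\[
\int (U^q)^*(-g)\psi_\epsilon\,dx = \frac{1}{2\pi}\int \frac{-\hat g(\xi)}{q-\eta(\xi)}\,\hat{\psi_\epsilon}(\xi)\,d\xi
\]
(the integrand is conjugate-symmetric thanks to $\eta(-\xi)=\overline{\eta(\xi)}$ and $\hat{\psi_\epsilon}$ being real). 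The inequality $|q-\eta(\xi)| \geq |\eta(\xi)|$ from \eqref{eq:symbolEstimate1}, combined with $|\hat g/\eta| \leq M$ on the support of $\hat{\psi_\epsilon}$, furnishes an $L^1$ dominator $M|\hat{\psi_\epsilon}|$, so dominated convergence gives $\int H \psi_\epsilon\,dx = (2\pi)^{-1}\int \hat g(\xi)/\eta(\xi)\,\hat{\psi_\epsilon}(\xi)\,d\xi$. The same pointwise bound plus the change of variables $\xi = \epsilon\zeta$ then yields
\[
0 \leq \int H \psi_\epsilon\,dx \leq \frac{M}{2\pi}\|\hat{\psi_\epsilon}\|_1 = \frac{M}{2\pi}\|\hat\psi\|_1,
\]
uniformly in $\epsilon \in (0,1]$. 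Applying Fatou to the pointwise limit $H\psi_\epsilon \to H \geq 0$ concludes $\int H\,dx \leq M\|\hat\psi\|_1/(2\pi) < \infty$.

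The main obstacle I anticipate is justifying the passage $q \to 0^+$ in the Fourier integral: this is precisely where the hypothesis matters, since without the support condition on $\hat\psi$ the ratio $\hat g/\eta$ need not be globally integrable. The compact support of $\hat\psi$ (chosen to lie in a neighborhood of zero on which local boundedness holds) together with the universal bound $|q-\eta| \geq |\eta|$ is what produces an $L^1$ dominator; the rest of the argument is routine bookkeeping with Plancherel and Fatou.
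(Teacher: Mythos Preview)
Your argument is correct and follows essentially the same strategy as the paper: pair $H$ with test functions whose Fourier transform is supported in a neighbourhood of the origin where $\hat g/\eta$ is bounded, pass through the resolvent via \eqref{eq:potentialDomainChar} and \eqref{eq:ResolventFourier}, obtain a uniform bound, and conclude by Fatou. The only noteworthy difference is in the construction of the test family: the paper builds functions $\varphi_n$ with $\hat\varphi_n \to I$ by multiplying a fixed compactly supported bump by shrinking Gaussians, whereas your dilation $\psi_\epsilon(x)=\psi(\epsilon x)$ of a single Fej\'er-type kernel achieves the same effect more directly and keeps $\|\hat\psi_\epsilon\|_1$ constant by a change of variables.
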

 
\begin{proof} 
  Using \eqref{eq:ResolventFourier} which did not rely on the stronger assumptions of Proposition \ref{p1} and the local boundedness of $\xi\mapsto\frac{\hat{g}(\xi)}{\eta(\xi)}$ in the first and \eqref{eq:symbolEstimate1} in the second inequality, there is some $\varepsilon>0$ and $C>0$ with
  \begin{equation}\label{eq:auxEq18}
    |\widehat{(U^q)^* g}(\xi)| \leq C |(q - \eta(\xi))|^{-1} |\eta(\xi)| \leq C\quad \text{for } \xi \in B_\varepsilon(0).
  \end{equation}
  Let us take a function $\varphi$ such that
  \begin{equation}\label{eq:testFct} 
    \varphi \in L^1(\mathbb{R}) \text{ satisfies } \varphi = 0 \text{ on } \mathbb{R} \setminus B_\varepsilon(0), \varphi \geq 0,  \hat{\varphi} \in L^1(\mathbb{R}) \text{ and }\hat{\varphi} \geq 0.
  \end{equation}
  Since $\varphi$, $\hat{\varphi}, (U^q)^* g \in L^1(\mathbb{R})$, see the beginning of the proof of Proposition \ref{p1}, Fubini's Theorem gives 
  \begin{equation}\label{eq:ResolventIntegralDamped}
    \int_\mathbb{R} (U^q)^* g(x) \hat{\varphi}(x) \dd x = \int_\mathbb{R} \int_\mathbb{R} (U^q)^* g(x) e^{i \xi x} \varphi(\xi) \dd \xi \dd x = \int_\mathbb{R} \widehat{(U^q)^* g}(\xi)\varphi(\xi) \dd \xi . 
  \end{equation}
  Furthermore, $H \hat{\varphi} \geq 0$ and $H \hat{\varphi} \in L^1(\mathbb{R})$ since $H \in C_0(\mathbb{R})$ and we have assumed $H \geq 0$ and \eqref{eq:testFct}. Thus, recalling $H = -V^* g$, we may estimate
  \begin{equation}\label{eq:auxEq19}\begin{aligned}
    0 \leq \int_\mathbb{R} H(x)\hat{\varphi}(x) \dd x 
    &\stackrel{\eqref{eq:potentialDomainChar}}{=}  - \lim_{q \to 0}  \int_\mathbb{R} (U^q)^* g(x) \hat{\varphi}(x) \dd x  \\
    &\stackrel{\eqref{eq:ResolventIntegralDamped}}{=} -\lim_{q \to 0}  \int_\mathbb{R} \widehat{(U^q)^* g}(\xi)\varphi(\xi) \dd \xi 
    \stackrel{\eqref{eq:auxEq18}}{\leq} C \int_\mathbb{R} \varphi(\xi) \dd x, 
  \end{aligned}\end{equation}
  where the first equality uses dominated convergence and the last step relies on our assumption~\eqref{eq:testFct} that $\varphi = 0$ outside $B_\varepsilon(0)$. 

  We now claim that there exists $\{\varphi_n\}_{n \in \mathbb{N}}\subset L^1(\mathbb{R})$ and $I \in (0,\infty)$ such that for each $n$, $\varphi_n$ satisfies \eqref{eq:testFct} and $\lim_{n \to \infty} \hat{\varphi_n}(x) = I $ for any $x \in \mathbb{R}$. Assuming that such a sequence can indeed be constructed, the following argument will finish the proof: inserting $\varphi_n$ in \eqref{eq:auxEq19}, letting $n \to \infty$ and using Fatou's Lemma yields
  \begin{equation*}\begin{aligned}
    0 \leq \int_\mathbb{R} H(x) \dd x & = \frac{1}{I} \int_\mathbb{R} \liminf_{n \to \infty} H(x) \hat{\varphi_n}(x) \dd x \\ &\leq  \liminf_{n \to \infty}  \frac{1}{I} \int_\mathbb{R} H(x) \hat{\varphi_n}(x) \dd x \\ &\leq \liminf_{n \to \infty} \frac{C}{I} \int_\mathbb{R} \varphi_n(x) \dd x = \liminf_{n \to \infty} \frac{C}{I} \hat{\varphi_n}(0)  = C < \infty
  \end{aligned}  \end{equation*}
  and therefore indeed $H \in L^1(\mathbb{R})$.
  \smallskip

  Thus, the remainder of the proof will be devoted to construct a sequence $\{\varphi_n\}_{n \in \mathbb{N}}\subset L^1(\mathbb{R})$ with the desired properties.
  Take $\chi_0 \in C_c^\infty(\mathbb{R}) \setminus \{0\}$ with $\chi_0 \geq 0$, $\chi_0(-x) = \chi_0(x)$ for all $x \in \mathbb{R}$ and $\chi_0(x) = 0$ for $x \notin B_{\varepsilon/3}(0)$. Set $\chi(y) := \int_\mathbb{R} \chi_0(y-x)\chi_0(x) \dd x = \chi_0 * \chi_0 (y) $. Then $\chi(y) = 0$ for $y \notin B_\varepsilon(0)$ and since the Fourier transform turns convolution into products, $\hat{\chi}(\xi) = (\hat{\chi_0}(\xi))^2$ for all $\xi \in \mathbb{R}$. In particular, $\hat{\chi} \geq 0$. Furthermore, $\hat{\chi_0} \neq 0$ implies that $I:=\int_\mathbb{R} \hat{\chi}(\xi) \dd \xi = \int_\mathbb{R} (\hat{\chi_0}(\xi))^2 \dd \xi$ satisfies $I > 0$. Since the Fourier transform maps the space $\mathcal{S}$ of rapidly decreasing functions into itself, $\chi_0 \in C_c^\infty(\mathbb{R}) \subset \mathcal{S}$ implies $\hat{\chi_0} \in \mathcal{S} \subset L^2(\mathbb{R})$. Thus $\hat{\chi}= (\hat{\chi_0})^2$ is integrable and we obtain $I \in (0,\infty)$. Finally, since $\hat{\chi} \in L^1(\mathbb{R})$, Fourier inversion gives $\chi(x) = (2\pi)^{-1} \hat{\hat{\chi}}(-x)$ for all $x \in \mathbb{R}$ (see \cite[Prop.~37.2]{Sato1999}). 
  
  For $n \in \mathbb{N}$, define 
  \begin{equation*}
    \varphi_n (x) := 2 \pi n \chi(-x) \exp\bigg(-\frac{1}{2}n^2 x^2 \bigg),  \quad x \in \mathbb{R},
  \end{equation*}
  and note that $\varphi_n(x) = \hat{\hat{\chi}}(x) n \hat{\psi_n}(x) $, where $\psi_n(x):= \frac{1}{n\sqrt{2 \pi}}\exp(-x^2/(2n^2))$ is the density of a normal with mean zero and variance $n$. In particular, $\varphi_n =n \widehat{(\hat{\chi} * \psi_n)}$ and using $\hat{\hat{f}}(x) = f(-x) $ for $f \in L^1(\mathbb{R})$ with $\hat{f} \in L^1(\mathbb{R})$, as above we obtain
  \begin{equation}\label{eq:testFctFT} 
    \hat{\varphi_n}(\xi) = n \widehat{\widehat{(\hat{\chi} * \psi_n)}}(\xi) = n \hat{\chi}* \psi_n(-\xi) = 
    \frac{1}{\sqrt{2 \pi}} \int_\mathbb{R} \hat{\chi}(y)\exp\bigg(-\frac{1}{2 n^2}(\xi + y)^2\bigg) \dd y .
  \end{equation}
  Thus, for any $n \in \mathbb{N}$, $\varphi_n$ indeed satisfies \eqref{eq:testFct} and applying dominated convergence (and noting that the integrand on the right-hand side converges pointwise to $\hat{\chi}$) in \eqref{eq:testFctFT} gives for any $\xi \in \mathbb{R}$, $ \lim_{n \to \infty} \hat{\varphi_n}(\xi) =  I$ as desired.
\end{proof}

\subsection{Necessity of Conditions}\label{sec:necessity}

In this section we assume that $\tau$ is a finite mean solution for the Skorokhod embedding problem corresponding to $\mu_i(\d x)=h_i(x)d x$ and study the associated Poisson equation $\mathcal A^* H=h_1-h_0$. We show that
\begin{itemize}
\item	there is a solution $H$. Using Proposition \ref{p1} we need to show $(\hat h_1-\hat h_0)/\eta\in L^1(\R)$. Integrability at infinity is only a consequence of the smoothness assumption on $h_0, h_1$ (Lemma \ref{lemma}) without using $\tau$. Integrability at zero is a consequence of Dynkin's formula and the existence of $\tau$ (Lemma \ref{lem}). 
\item $H\geq 0$ and $H\in L^1(\R)$. This is a consequence of Dynkin's formula and the Riesz representation theorem.
\end{itemize}

The first lemma is the source of our assumptions on the regularity for $h_0$ and $h_1$.
\begin{lemma}\label{lemma}
  Under Assumption~\ref{ass:regularity} on $h_0$ and $h_1$ (as in Theorem~\ref{thm:main}) we have
  \begin{align*}
    \int_{|u|>R} \left| \frac{\widehat{h_1}(u) - \widehat{h_0}(u)}{\eta(u)} \right| \dd u <\infty
  \end{align*}
  for some $R>0$.
\end{lemma}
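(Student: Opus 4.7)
The plan is to treat the three types of L\'evy processes separately, since in each case the mechanism producing the integrability is different. Throughout I use that $h_0, h_1$ are probability densities, so $\|h_i\|_{L^1}=1$ and consequently $\|\widehat{h_i}\|_\infty \leq 1$; in particular $|\widehat{h_1}(u)-\widehat{h_0}(u)|\leq 2$ for every $u\in\R$.

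\textbf{Type S.} Here $\eta$ is symmetric and $\int_1^\infty |\eta(u)|^{-1}\dd u<\infty$. Combining the uniform bound $|\widehat{h_1}(u)-\widehat{h_0}(u)|\leq 2$ with symmetry of $\eta$ yields
\begin{equation*}
  \int_{|u|>1}\left|\frac{\widehat{h_1}(u)-\widehat{h_0}(u)}{\eta(u)}\right|\dd u
  \leq 2\int_{|u|>1}\frac{1}{|\eta(u)|}\dd u
  = 4\int_1^\infty\frac{1}{|\eta(u)|}\dd u <\infty,
\end{equation*}
so we may take $R=1$.

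\textbf{Type 0.} By definition there exist $c>0$ and $R>0$ such that $|\eta(u)|\geq c$ for $|u|>R$. Since $h_i\in C_0^2(\R)$ with $h_i,h_i',h_i''\in L^1(\R)$, two integrations by parts give $\widehat{h_i''}(u)=-u^2\widehat{h_i}(u)$, so $|\widehat{h_i}(u)|\leq \|h_i''\|_{L^1}/u^2$ for $u\neq 0$. Therefore, for $|u|>R$,
\begin{equation*}
  \left|\frac{\widehat{h_1}(u)-\widehat{h_0}(u)}{\eta(u)}\right|
  \leq \frac{\|h_1''\|_{L^1}+\|h_0''\|_{L^1}}{c\,u^2},
\end{equation*}
which is integrable on $\{|u|>R\}$.

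\textbf{Type D.} By assumption $\widehat{h_1}-\widehat{h_0}\in C_c(\R)$, so there is $R>0$ with $\widehat{h_1}(u)-\widehat{h_0}(u)=0$ for all $|u|>R$. On $\{|u|>R\}$ we interpret the integrand as $0$ (consistent with $0/\eta(u)=0$ whenever $\eta(u)\neq 0$, and as the zero function a.e.\ otherwise), so the integral over $\{|u|>R\}$ vanishes trivially.

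In all three cases the required $R$ has been exhibited, finishing the proof. The only nontrivial ingredient is the observation in type 0 that the $L^1$ assumption on $h_i''$ forces quadratic decay of $\widehat{h_i}$; everything else is a direct matching of the hypotheses in Assumption~\ref{ass:regularity} to the defining properties of the corresponding type.
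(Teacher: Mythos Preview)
Your proof is correct and follows essentially the same approach as the paper's own proof: both treat the three types separately, using the trivial bound $|\widehat{h_1}-\widehat{h_0}|\leq 2$ together with symmetry and the integrability of $1/|\eta|$ for type~S, the lower bound on $|\eta|$ combined with the quadratic decay of $\widehat{h_i}$ coming from $h_i''\in L^1$ for type~0, and the compact support of $\widehat{h_1}-\widehat{h_0}$ for type~D.
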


\begin{proof}
  We consider all cases listed in Assumption~\ref{ass:regularity} separately. \smallskip

  \textbf{Type S:} 
  Since $|\widehat{h_i}(u)|\leq 1$ for $i=0,1$, for $R \geq 1$ one can use symmetry and the integrability assumption for $1/\eta$ to estimate
  \begin{align*}
    \int_{|u|>R} \left| \frac{\widehat{h_1}(u) - \widehat{h_0}(u)}{\eta(u)} \right| \dd u  \leq \int_{|u|>R} \frac{2}{|\eta(u)|} \dd u  \leq 4 \int_R^\infty \frac{1}{|\eta(u)|} \dd u <\infty.
  \end{align*}
  \smallskip
	
  \textbf{Type 0:}
  By assumption there exists $R > 0, C > 0$ with
  \begin{equation}\label{eq:symbolatInfinity} 
    |\eta(u)| \geq C \quad \text{ for } \quad |u| \geq R.  
  \end{equation}
  On the other hand, the regularity assumptions for type 0 guarantee that $h_i^{(2)} \in L^1(\R)$ and so standard Fourier analysis gives
  \begin{equation}\label{eq:FourieratInfinity}
    \left|u^{2} (\widehat{h_1}(u) - \widehat{h_0}(u))\right| = \left| \widehat{h_1^{(2)}}(u) - \widehat{h_0^{(2)}}(u)\right| \leq \tilde{C}
  \end{equation}
  for all $u \in \R$, where $\tilde{C}:=\int_\R |h_1^{(2)}(x)|+|h_0^{(2)}(x)| \dd x$.

  Using \eqref{eq:symbolatInfinity} in the first and \eqref{eq:FourieratInfinity} in the second step yields
  \begin{align*}
    \int_{|u|>R} \left| \frac{\widehat{h_1}(u) - \widehat{h_0}(u)}{\eta(u)} \right| \dd u  \leq \frac{1}{C} \int_{|u|>R} \left|\widehat{h_1}(u) - \widehat{h_0}(u)\right| \dd u  \leq \frac{\tilde{C}}{C} \int_{|u|>R} \frac{1}{|u|^2} \dd u <\infty.
  \end{align*}
  \smallskip	
	
  \textbf{Type D:} 
  By assumption there exists $R>0$ such that $\widehat{h_1}(u)-\widehat{h_0}(u) = 0$ for all $|u| > R$ and so the integral is $0$.
\end{proof}

Lemma \ref{lemma} was independent of the Skorokhod embedding problem whereas the integrability around the origin indeed is a consequence of the SEP. The crucial ingredient of the proof is the use of Dynkin's formula for the complex exponential function:

\begin{lemma}\label{lem}
  Suppose $\tau$ is a finite mean solution to the Skorokhod embedding problem for $\mu_i(\d x)=h_i(x)\dd x$. Then
  \begin{align}\label{eq:auxEq50}
    \eta(u) \E^{\mu_0} \left[ \int_0^{\tau} e^{i u L_s} \dd s \right]= \widehat{h_1}(u) - \widehat{h_0}(u)
  \end{align}
  for all $u \in \R$ and 
    \begin{align*}
    \int_{|u|\leq R} \left| \frac{\widehat{h_1}(u) - \widehat{h_0}(u)}{\eta(u)} \right| \dd u <\infty
  \end{align*}
  for any $R>0$.

\end{lemma}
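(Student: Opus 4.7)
The plan is to combine the exponential martingale associated with the characteristic function of $L$ with optional sampling at $\tau$. For fixed $u \in \R$, I would introduce the complex-valued process
\begin{equation*}
M_t^u := e^{iuL_t} - \eta(u)\int_0^t e^{iuL_s}\,\dd s, \qquad t \geq 0,
\end{equation*}
and verify directly from independent increments together with $\E[e^{iu(L_r-L_s)}] = e^{(r-s)\eta(u)}$ that $(M_t^u)$ is an $(\mathcal{F}_t)$-martingale under $\P^{\mu_0}$. This circumvents Dynkin's formula as stated in the excerpt, which would require $e^{iu\cdot} \in D(\mathcal{A}) \subset C_0(\R)$, a containment that fails because the complex exponential is not in $C_0(\R)$.

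Next, I would apply optional sampling at the bounded time $\tau \wedge n$ to obtain $\E^{\mu_0}[M^u_{\tau\wedge n}] = \E^{\mu_0}[M^u_0] = \widehat{h_0}(u)$. To pass $n \to \infty$, I rely on $\tau < \infty$ a.s.\ (from $\E^{\mu_0}[\tau] < \infty$), the trivial bounds $|e^{iuL_{\tau\wedge n}}| \leq 1$ and $\bigl|\int_0^{\tau \wedge n} e^{iuL_s}\,\dd s\bigr| \leq \tau \in L^1(\P^{\mu_0})$, together with right-continuity of the paths of $L$. Dominated convergence and the identifications $L_\tau \sim \mu_1$, $L_0 \sim \mu_0$ then give
\begin{equation*}
\widehat{h_1}(u) - \eta(u)\,\E^{\mu_0}\!\left[\int_0^\tau e^{iuL_s}\,\dd s\right] = \widehat{h_0}(u),
\end{equation*}
which is the first assertion of the lemma.

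For the integrability statement, setting $g(u) := \E^{\mu_0}[\int_0^\tau e^{iuL_s}\,\dd s]$, the trivial estimate $|g(u)| \leq \E^{\mu_0}[\tau]$ holds for every $u \in \R$. The identity $\widehat{h_1}(u) - \widehat{h_0}(u) = \eta(u)\,g(u)$ then forces $|(\widehat{h_1}(u) - \widehat{h_0}(u))/\eta(u)| \leq \E^{\mu_0}[\tau]$ wherever $\eta(u) \neq 0$; on the (closed) exceptional set both numerator and denominator vanish and one may simply interpret the ratio as $g(u)$, preserving the same uniform bound. Integrating over $|u|\leq R$ yields a bound of $2R\,\E^{\mu_0}[\tau] < \infty$. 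The only genuine obstacle in the entire argument is thus the replacement of Dynkin's formula by the direct martingale computation of Step 1; everything else is a routine dominated convergence. Note also that the smoothness assumptions on $h_0, h_1$ do not enter here, as they are needed only for the integrability \emph{at infinity} provided by Lemma~\ref{lemma}, while the local integrability established above depends solely on $\E^{\mu_0}[\tau]<\infty$.
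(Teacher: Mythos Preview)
Your proposal is correct and is essentially the same argument as the paper's. The paper also verifies directly (via independent increments and $\E[e^{iu(L_r-L_s)}]=e^{(r-s)\eta(u)}$) that $e^{iuL_t}-e^{iuL_0}-\eta(u)\int_0^t e^{iuL_r}\,\d r$ is a martingale, then applies optional sampling at $\tau\wedge t$ and passes to the limit by dominated convergence; what the paper labels ``Dynkin's formula'' is precisely this optional-sampling identity for bounded $f,g$ making $f(L_t)-f(L_0)-\int_0^t g(L_s)\,\d s$ a martingale, so your ``circumvention'' is in fact the paper's own route. One minor remark: since $\tau<\infty$ a.s., for each $\omega$ one has $\tau\wedge n=\tau$ eventually, so $L_{\tau\wedge n}\to L_\tau$ is immediate and neither right-continuity nor quasi-left continuity is actually needed at this step.
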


\begin{proof}
  Let $f, g \in C_b(\R)$ be such that
  \begin{equation*}
    M^f_t := f(L_t) - f(L_0) - \int_0^t g (L_s) \dd s,  \quad t \geq 0,
  \end{equation*}
  is a martingale. The optional sampling theorem implies that also $(M^f_{t \wedge \tau})_{t \geq 0}$ is a martingale. In particular, for any $t \geq 0$,
  \[ 
    \E^{\mu_0} \left[ \int_0^{\tau \wedge t} g(L_s) \dd s \right] = \E^{\mu_0}[f(L_{\tau \wedge t})] - \E^{\mu_0}[f(L_0)].  
  \]
  Since $\P^{\mu_0}(\tau < \infty)=1$ and $L$ is quasi-left continuous, it holds that $\P^{\mu_0}(\lim_{t \to \infty} L_{\tau \wedge t} =L_\tau)=1$.
  Using dominated convergence, $\E^{\mu_0}[\tau] < \infty$ and $f, g  \in C_b(\mathbb{R})$, one may let $t \to \infty$ to obtain Dynkin's formula,
  \begin{equation}\label{eq:dynkin} 
    \E^{\mu_0} \left[ \int_0^{\tau} g(L_s) \dd s \right] = \E^{\mu_0}[f(L_{\tau})] - \E^{\mu_0}[f(L_0)] .
  \end{equation}
  
  For $u \in \R$, set 
  \begin{equation*}
    M^u_t:=e^{i u L_t} - e^{i u L_0} - \eta(u)\int_0^t e^{i u L_r} \dd r ,  \quad t \geq 0.
  \end{equation*}
  Then $M^u$ is $(\mathcal{F}_t)_{t \geq 0}$-adapted and for any $t \geq 0$, $M_t^u$ is a bounded random variable. Furthermore, 
  \begin{align*}
    \E^{\mu_0}[M^u_t-M^u_s|\mathcal{F}_s]
    &=e^{i u L_s}\E^{\mu_0}\left[\left.e^{i u (L_t-L_s)} - 1 -  \eta(u) \int_s^t e^{i u (L_r-L_s)} \dd r \right|\mathcal{F}_s \right] \\
    &\stackrel{\mathmakebox[\widthof{=}]{\eqref{eq:charFctLevy}}}{=}e^{i u L_s}\bigg(e^{(t-s)\eta(u)} - 1 - \eta(u) \int_s^t e^{ (r-s) \eta(u)} \dd r\bigg) \\ &= 0
  \end{align*}
  and therefore $M^u$ is a complex-valued $(\mathcal{F}_t)_{t \geq 0}$-martingale. Thus Dynkin's formula \eqref{eq:dynkin} can be applied to $f(x):= e^{i u x}$, $g(x):= \eta(u) f(x)$ and so
  \[ \eta(u) \E^{\mu_0} \left[ \int_0^{\tau} e^{i u L_r} \dd r \right] = \E^{\mu_0} \left[ \int_0^{\tau} g(L_r) \dd r \right] = \E^{\mu_0}[f(L_{\tau})] - \E^{\mu_0}[f(L_0)] = \E^{\mu_0}[e^{i u L_\tau}] - \E^{\mu_0}[e^{i u L_0}].  \]
This proves the first claim of the lemma. We can now deduce that $(\hat h_1-\hat h_0)/\eta$ is integrable in compact sets. By the above we obtain
  \begin{align*}
    \left|\widehat{h_1}(u) - \widehat{h_0}(u) \right| 
    = \left|\eta(u) \E^{\mu_0} \left[ \int_0^{\tau} e^{i u L_r} \dd r \right]\right| 
    \leq |\eta(u)|\E^{\mu_0}[\tau],
  \end{align*}
and this implies
  \begin{align*}
    \int_{|u|\leq R} \left| \frac{\widehat{h_1}(u) - \widehat{h_0}(u)}{\eta(u)} \right| \dd u \leq 2 R\, \E^{\mu_0}[\tau]<\infty.
  \end{align*} 
\end{proof}

Combining the previous lemmas we proved that a finite mean solution to the Skorokhod embedding problem for "sufficiently smooth" densities implies $(\widehat{h_1}-\widehat{h_0})/\eta\in L^1(\R)$ which, solving in Fourier domain, implies there is a solution $H$ to the Poisson equation $\mathcal A^* H=h_1-h_0$.\smallskip

Now we can finish the proof by showing that existence of a finite mean solution to the Skorokhod embedding problem implies $H\geq 0$ and $H\in L^1(\R)$.

\begin{proof}[Proof of Theorem~\ref{thm:main} (necessity):]
  We showed  that  $(\widehat{h_1}-\widehat{h_0})/\eta\in L^1(\R)$ so the Poisson equation $\mathcal A^*H=h_1-h_0$ can be solved using Proposition \ref{p1} as
  \begin{align*}
    H(x)= \frac{1}{2 \pi} \int_\mathbb{R} \frac{\widehat{h_1}(\xi)-\widehat{h_0}(\xi)}{\eta(\xi)}e^{-i x \xi} \dd \xi,\quad x \in \mathbb{R}.
  \end{align*}
  It remains to prove $H\geq 0$ and $H\in L^1(\R)$:\smallskip

  Define the functional $\Lambda \colon C_c(\R) \to \R$ by
  \begin{equation*} 
    \Lambda(g):= \E^{\mu_0} \left[ \int_0^{\tau} g(L_s) \dd s \right].
  \end{equation*}
  Then $\Lambda(g)\geq 0$ for $g \geq 0$, $\Lambda$ is linear and $|\Lambda(g)|\leq \|g\|_\infty \E^{\mu_0}[\tau]$. By the Riesz Representation Theorem (e.g. \cite[Theorem~2.14]{Rudin1987}), there exists a measure $\nu$ on $\mathcal{B}(\R)$ such that for all $g \in C_c(\R)$, 
  \begin{equation}\label{eq:auxEq55} 
    \E^{\mu_0} \left[ \int_0^{\tau} g(L_s) \dd s \right] = \Lambda(g) = \int_\R g(x)\, \nu(\d x). 
  \end{equation}

  Choosing $\{g_n\}_{n \in \mathbb{N}} \subset C_c(\mathbb{R})$, increasing monotonically to $1$ with $g_n \geq 0$ and applying monotone convergence gives
  \begin{equation*}
    \nu(\mathbb{R}) = \lim_{n \to \infty} \int_\mathbb{R} g_n(x)\, \nu(\d x) = \lim_{n \to \infty} \E^{\mu_0} \left[ \int_0^{\tau} g_n (L_s) \dd s \right] = \E^{\mu_0}[\tau] < \infty.
  \end{equation*}
  Thus $\nu$ is a finite measure and by dominated convergence, \eqref{eq:auxEq55} holds for all $g \in C_b(\R)$. Inserting $g(x) := e^{i u x}$ for $u \in \R$ in \eqref{eq:auxEq55} and using \eqref{eq:auxEq50} yields
  \begin{equation*}
    \hat{\nu}(u) = \int_\R e^{i u x}\, \nu(\d x) = \E^{\mu_0} \left[ \int_0^{\tau} e^{i u L_s} \dd s \right]=\frac{\widehat{h_1}(u) - \widehat{h_0}(u)}{\eta(u)},
  \end{equation*}
  which is integrable by Lemma~\ref{lemma} and Lemma~\ref{lem}.
  Hence, e.g. by \cite[Proposition~2.5~(xii)]{Sato1999}, $\nu$ is absolutely continuous with respect to the Lebesgue measure and has a (non-negative) bounded continuous density given by
  \begin{equation*} 
    x \mapsto \frac{1}{2 \pi} \int_\mathbb{R} \frac{\widehat{h_1}(\xi)-\widehat{h_0}(\xi)}{\eta(\xi)}e^{-i x \xi} \dd \xi,\quad x \in \mathbb{R}. 
  \end{equation*}
  But this function is identical to $H$ and thus $\nu(\d x) = H(x)\dd x$. In particular, $H \geq 0$ and $H \in L^1(\R)$.
\end{proof}

\subsection{Sufficiency of Conditions}\label{sec:sufficiency}

Under the assumptions of Theorem~\ref{thm:main} we now construct a finite mean stopping time with $L_\tau\sim h_1(x)\dd x$ under the initial condition $L_0\sim h_0(x)\dd x$. We refer the reader to the sketch in Subsection~\ref{sec:summary} to follow more easily the construction of $\tau$. During the proof we refer to the time-change arguments and uniqueness results for Fokker-Planck equations from Section~\ref{sec:Fokker-Planck}.\smallskip

Let $\mathcal{D}:=C_c^\infty(\mathbb{R})$ and define the action of the L\'evy generator $\mathcal{A}\colon \mathcal{D} \to C_0(\mathbb{R})$ for $u \in \mathcal{D}$ via~\eqref{eq:LevyGenerator}. Furthermore, taking into account the definitions from Theorem~\ref{thm:main}, let
\begin{align}\label{eq:phiDef} 
  \phi(t,x) &:=  (1-t)h_0(x) + t h_1(x), \\ 
  \label{eq:sigmaDef} \sigma(t,x) &:=  \frac{H(x)}{(1-t)h_0(x) + t h_1(x)} ,\quad (t,x) \in [0,1]\times \mathbb{R},
\end{align}
and, under $\P^{\mu_0}$,
\begin{equation}\label{eq:Moserdelta}
  \delta(s) := \inf\big\{t \in [0,\rho) \, :\, \Delta(t)\geq s\big \} \wedge \rho , \quad s \in [0,1],
\end{equation}
where
\begin{equation}\label{eq:MoserDelta}
  \Delta(t) := 1-e^{G(t)}+\int_0^t e^{(G(t)-G(r))} \frac{h_1(L_r)}{H(L_r)} \dd r, \quad  t \in [0,\rho),
\end{equation}
with 
\begin{align} \label{eq:RhoAndGProof}
  \rho :=  \inf\{ t \in [0,\infty) \, : \, H(L_t) = 0 \}\quad\text{ and }\quad G(t) := \int_0^t \frac{h_1(L_r)-h_0(L_r)}{H(L_r)} \dd r.
\end{align}

The proof is split in two main steps: First we assume in addition that $h_0$ and $h_1$ are such that $\sigma$ is bounded and argue as in Section~\ref{sec:summary}. Then, for $\sigma$ unbounded, we approximate $h_i$ by $h_i^{(\varepsilon)}$ with associated $\sigma^{(\varepsilon)}$ bounded and deduce Theorem~\ref{thm:main}.
  
\begin{proof}[Proof of Theorem~\ref{thm:main} (sufficiency \textbf{if \boldmath$\sigma$ is bounded})]
  For the proof the following statements are established: 
  \begin{itemize}
    \item[(i)] $(\delta(s))_{s \in [0,1]}$ constitutes a family of $(\mathcal{F}_t)_{t \geq 0}$-stopping times satisfying $\P^{\mu_0}\text{-}a.s.$, 
               \begin{equation}\label{eq:auxEqEnd}
 	         \delta(s) = \int_0^s \sigma(u,L_{\delta(u)}) \dd u,\quad s \in [0,1], 
               \end{equation}
    \item[(ii)] for any $s \in [0,1]$, the law of $ L_{\delta(s)}$ under $\P^{\mu_0}$ is $\phi(s,x) \dd x$, 
    \item[(iii)] $\E^{\mu_0}[\delta(1)] = \int_\mathbb{R} H(x) \dd x<\infty$.
  \end{itemize}
  
  Theorem~\ref{thm:main} can then be deduced from (i)-(iii) by setting $\tau := \delta(1)$ because $\phi(1,\cdot)=h_1$ by construction. Note that the stopping time looks slightly different here than in the statement of Theorem~\ref{thm:main}. Both representations are equal because from~\eqref{eq:MoserDelta} one obtains 
  \begin{align*}
    \Delta(t) \geq 1 \quad 
    &\Longleftrightarrow \quad -e^{G(t)}+\int_0^t e^{(G(t)-G(r))} \frac{h_1(L_r)}{H(L_r)} \dd r \geq 0 \\ 
    &\Longleftrightarrow\quad \int_0^t e^{-G(r)} \frac{h_1(L_r)}{H(L_r)} \dd r \geq 1,
  \end{align*}
  and, hence, the claimed representation of $\tau=\delta(1)$ as generalized inverse in Theorem~\ref{thm:main} and in~\eqref{eq:Moserdelta} are equal.\smallskip

  The proof of (i)-(iii) proceeds roughly as follows: Having verified in Lemma~\ref{lem:levyFeller} that $L$, $\mathcal{D}$ and $\mathcal A$ fall within the framework of \cite{Nr2}, one may rely on the results from Section~\ref{sec:Fokker-Planck}. Then, firstly, it is proved that $L$ and $\sigma$ satisfy the assumptions of Proposition~\ref{prop:timechange} and that \eqref{eq:MoserDelta} is the solution to the differential equation~\eqref{eq:Delta}, so Proposition~\ref{prop:timechange} implies (i). Based on Proposition~\ref{prop:density time changed}, one then verifies that $(\phi(s,x)\dd x)_{s \in [0,1]}$ and the marginals of $L_{\delta(\cdot)}$ are solutions to the Fokker-Planck equation~\eqref{eq:KolmogorovEqn}. Then from the uniqueness result Theorem~\ref{thm:KolmogorovUniqueness} it follows that $L_{\delta(s)}$ indeed has the law $ \phi(s,x) \dd x$, i.e. (ii). Combining the representation of $\delta(1)$ established in (i) with (ii) and the fact that $\sigma(t,x) \phi(t,x) = H(x)$ for all $t \in [0,1]$ and $x \in \mathbb{R}$ one easily obtains (iii). \smallskip
 
  \textbf{Verification of (i):}  
 The claim is a consequence of Proposition~\ref{prop:timechange} (compare \eqref{eq:Delta} and \eqref{eq:stoppingTime2} for the formula of $\delta$ in terms of $\Delta$). We only need to verify the conditions of Proposition~\ref{prop:timechange}  and then solve \eqref{eq:Delta} 
for our choice of $\sigma$ from \eqref{eq:sigmaDef}. 
It is the particular form of the denominator which allows us to solve \eqref{eq:Delta} explicitly and get the formula for $\Delta$ as in \eqref{eq:MoserDelta}.\smallskip
  
  By Proposition~\ref{p1}, $H \in D(\mathcal{A}^*) \subset C_0(\mathbb{R})$, where $\mathcal{A}^*\colon D(\mathcal{A}^*) \to C_0(\mathbb{R})$ denotes the adjoint (see Subsection~\ref{subsec:levyprelim} and \eqref{eq:adjointDef}). Since also $H \in D(\mathcal{A})$ by Proposition~\ref{prop:levyregularaux}, $H$ is regular for (the law of) $L$ by Proposition~\ref{prop:regular}. Since $h_0$ and $h_1$ are assumed positive and continuous, for any $K \subset \mathbb{R}$ compact, there exist $C_0, C_1 > 0$ such that 
  \begin{equation}\label{eq:auxEq35}
    C_0 \leq h_i(x) \leq C_1, \quad x \in K, \,  i =0,1. 
  \end{equation}  
  Set $\tilde{\sigma}(t,x):=1/\phi(t,x)$ for $(t,x) \in [0,1]\times K$. Then \eqref{eq:auxEq35} implies $1/C_1 \leq \tilde{\sigma}(t,x) \leq 1/C_0$ and 
  \begin{equation*}
    |\tilde{\sigma}(t,x)-\tilde{\sigma}(s,x)|=\frac{1}{\phi(s,x)\phi(t,x)}|\phi(s,x)-\phi(t,x)| \leq \frac{2 C_1}{C_0^2} |t-s|, \quad (t,x) \in [0,1]\times K.
  \end{equation*}
 This is precisely what Assumption~\ref{ass:sigma}~(ii) asks for our $\sigma$ written as $\sigma=H \tilde \sigma$. Since $\sigma$ is also assumed to be bounded in this first part of the proof, Proposition~\ref{prop:timechange} can be applied. The lemma implies that the random times defined by \eqref{eq:stoppingTime2} are stopping times and that \eqref{eq:ode time-change} holds. As the definitions \eqref{eq:stoppingTime2} and \eqref{eq:Moserdelta} coincide, in order to show (i), it thus suffices to show that $\rho$ and $\Delta$ in \eqref{eq:Moserdelta} coincide $\P^{\mu_0}$-a.s. with \eqref{eq:rho} and \eqref{eq:Delta}.\smallskip
  
  Since $H$ is regular for the law of $L$ (as argued above), $\rho$ in \eqref{eq:Moserdelta} (respectively \eqref{eq:RhoAndGProof}) is equal to \eqref{eq:rho} (see Definition~\ref{def:regular}). Furthermore, as shown in  Proposition~\ref{prop:timechange}, the solution to the Carath\'eodory differential equation \eqref{eq:Delta} is $\P^{\mu_0}$-a.s. unique. Thus it suffices to show that $\P^{\mu_0}$-a.s., $\Delta$ defined by \eqref{eq:MoserDelta} is a solution to \eqref{eq:Delta}, i.e. that $\P^{\mu_0}$-a.s.
  \begin{equation}\label{eq:auxEq39}
    \Delta(t) = \int_0^t \sigma(\Delta(r),L_r)^{-1} \dd r,\quad t \in [0, \delta (1)),
  \end{equation}
  holds. Inserting \eqref{eq:sigmaDef} in \eqref{eq:auxEq39} yields
  \begin{equation}\label{eq:auxEq40} 
    \Delta(t) =  \int_0^t \frac{\Delta(r)(h_1(L_r)-h_0(L_r))}{H(L_r)} \dd r + \int_0^t \frac{h_0(L_r)}{H(L_r)} \dd r , \quad t \in [0, \delta (1)).
  \end{equation}
  On the other hand, $\delta(1) \leq \rho$ and $\P^{\mu_0}$-a.s. the candidate solution $\Delta$ in \eqref{eq:MoserDelta} is absolutely continuous on every closed subinterval of $[0,\rho)$ and 
  \begin{align*} 
    \dot{\Delta}(t) & = - \dot{G}(t) e^{G(t)} + \dot{G}(t) \int_0^t e^{(G(t)-G(r))} \frac{h_1(L_r)}{H(L_r)} \dd r + \frac{h_1(L_t)}{H(L_t)} \\
    & = \dot{G}(t)(\Delta(t)-1) + \frac{h_1(L_t)}{H(L_t)}\\
    & = \Delta(t) \frac{h_1(L_t)-h_0(L_t)}{H(L_t)}  + \frac{h_0(L_t)}{H(L_t)},
  \end{align*} 
  for almost every $t \in [0,\rho)$. This is equivalent to \eqref{eq:MoserDelta} being a solution to \eqref{eq:auxEq40} on $[0,\rho)$ as desired. 
  \smallskip
 
  \textbf{Verification of (ii):}   
  Firstly, in (i) it has been verified that Assumption~\ref{ass:sigma} holds. Secondly, Assumption~\ref{ass:SigmaBounded} holds and, as argued above, Assumption~\ref{ass:Hregular} is satisfied.  Thus, Proposition~\ref{prop:density time changed} and Theorem~\ref{thm:KolmogorovUniqueness} can be applied. This shows that $(\tilde{p}(s,\cdot))_{s \in [0,1]}$ is the unique solution to the Fokker-Planck equation ~\eqref{eq:KolmogorovEqn}, where $\tilde{p}(s,\cdot)$ is the law of $L_{\delta(s)}$ under $\P^{\mu_0}$. Thus, in order to establish (ii), it suffices to verify that $(p(s,\cdot))_{s \in [0,1]}$ with $p(s,\d x) = \phi(s,x)\dd x$ also satisfies the Fokker-Planck equation~\eqref{eq:KolmogorovEqn}.

  Inserting $p(s,\d x) = \phi(s,x)\dd x$ with $\phi$ from \eqref{eq:phiDef} into the left-hand side of \eqref{eq:KolmogorovEqn}, using $H \in D(\mathcal{A}^*)$, $\mathcal{A}^* H = h_1-h_0$ (by \eqref{b} and Proposition~\ref{p1}) and $H \in L^1(\mathbb{R})$, $ h_1-h_0 \in L^1(\mathbb{R})$ (by assumption), Lemma~\ref{lem:adjoint} gives
  \begin{align*}
    \int_\mathbb{R} f(x)\,p(t,\d x) - \int_\mathbb{R} f(x)\, \mu_0(\d x) &\stackrel{\mathmakebox[\widthof{=}]{\eqref{eq:phiDef}}}{=} \int_\mathbb{R} f(x) t (h_1-h_0)(x)\dd x \\
    &= \int_0^t \int_\mathbb{R} \mathcal{A}^*H(x) f(x) \dd x \dd s
    \\ &\stackrel{\mathmakebox[\widthof{=}]{\eqref{eq:IntegralAdjoint}}}{=} \int_0^t \int_\mathbb{R} H(x) \mathcal{A}f(x) \dd x \dd s\\
    & = \int_0^t \int_\mathbb{R} \sigma(s,x)\mathcal{A}f(x)\,p(s,\d x) \dd s, 
   \end{align*}
  where the last step is just the definition \eqref{eq:sigmaDef} and \eqref{eq:phiDef}. Hence, by Theorem~\ref{thm:KolmogorovUniqueness} and our argument above we may indeed conclude (ii). 
  \smallskip
  
  \textbf{Verification of (iii):}
  Having verified that the marginals of $L_{\delta(\cdot)}$ are given as in \eqref{eq:phiDef}, we may apply the representation of $\delta$ as solution to an integral equation (established in (i)), Tonelli's Theorem and the definition of $\sigma$ and $\phi$ to see, using \eqref{eq:auxEqEnd},
  \begin{equation*}\begin{aligned}
    \mathbb{E}^{\mu_0}[\delta(1)] &=\mathbb{E}^{\mu_0}\bigg[\int_0^1 \sigma(u,L_{\delta(u)}) \dd u \bigg ]=  \int_0^1 \E^{\mu_0}[\sigma(u,L_{\delta(u)})] \dd u = \int_0^1 \int_\mathbb{R} \sigma(u,x) \phi(u,x) \dd x \dd u \\ & = \int_\mathbb{R} H(x) \dd x.
  \end{aligned}\end{equation*}
  The right-hand side is finite by assumption and so $\tau=\delta(1)$ has finite mean.
\end{proof}

\begin{proof}[Proof of Theorem~\ref{thm:main} (sufficiency)]  
  To finish the proof of sufficiency we need to remove the assumption that $h_0$ and $h_1$ are such that $\sigma=H/\phi$ is bounded using a truncation procedure. For this sake we shift  the densities in order to shift down $\phi$. \smallskip
	
  \textbf{Approximate stopping times \boldmath$\delta^{(\varepsilon)}$:}  Since $H \in L^1(\mathbb{R})$ by assumption, $C:= \int_\mathbb{R} H(x)\dd x$ is well-defined and $p := H / C$ is a probability density on $\mathbb{R}$. For any $\varepsilon \in (0,1)$ we define
  \begin{align}\label{eq:translatedDens}
    &h_i^{(\varepsilon)}(x):= (1-\varepsilon) h_i(x) + \varepsilon p(x),\quad i=0,1,\nonumber\\
    &\phi^{(\varepsilon)}(t,x) := (1-t)h_0^{(\varepsilon)}(x) + t h_1^{(\varepsilon)}(x),\\
    &H	^{(\varepsilon)}(x):=(1-\varepsilon)H(x),\nonumber
  \end{align}
  for $(t,x) \in [0,1]\times \mathbb{R}$, and the approximation to $\sigma$ by $\sigma^{(\varepsilon)}:=\frac{H^{(\varepsilon)}}{\phi^{(\varepsilon)}}$. Then, for any $\varepsilon \in (0,1)$,  $\phi^{(\varepsilon)}\geq \varepsilon p$ and so, for any $t\in [0,1]$ and $x \in \mathbb{R}$ with $H(x) \neq 0$, 
  \begin{equation*}
    \sigma^{(\varepsilon)}(t,x) \leq \frac{H^{(\varepsilon)}(x)}{\varepsilon p(x)}  = \frac{(1-\varepsilon) C}{\varepsilon}.
  \end{equation*}
  Thus, for any $\varepsilon \in (0,1)$, $\sigma^{(\varepsilon)}$ is bounded. Furthermore, $h_i^{(\varepsilon)} \in C_0(\mathbb{R})$, $h_i^{(\varepsilon)}(x) > 0$ and 
  \begin{equation}\label{eq:auxEq41} 
    h_1^{(\varepsilon)}(x) - h_0^{(\varepsilon)}(x) = (1-\varepsilon) (h_1(x) - h_0(x)) 
  \end{equation}
  for any $x \in \mathbb{R}$, $i=0,1$ and $\varepsilon \in [0,1)$. In particular, $H^{(\varepsilon)}$ satisfies the Poisson equation $\mathcal A^* H^{(\varepsilon)}=h_1^{(\varepsilon)}-h_0^{(\varepsilon)}$.
  Since $H^{(\varepsilon)}=(1-\varepsilon)H$, the following properties are inherited from $H$: $H^{(\varepsilon)}$ is non-negative, $H^{(\varepsilon)} \in L^1(\mathbb{R})$ and $H^{(\varepsilon)} \in D(\mathcal{A})$ by Proposition~\ref{prop:levyregularaux}.
  
  Thus Step (i) of the bounded case applied with $h_0^{(\varepsilon)}, h_1^{(\varepsilon)}$ instead of $h_0, h_1$ shows that, for any $\varepsilon \in (0,1)$,
  \begin{equation}\label{eq:auxEq43} \begin{aligned}
    \delta^{(\varepsilon)}(s)  := \inf\{t \in [0,\rho) \, :\, \Delta^{(\varepsilon)}(t)\geq s \} \wedge \rho , \quad s \in [0,1],
  \end{aligned} \end{equation}
  constitutes a family of $(\mathcal{F}_t)_{t \geq 0}$-stopping times, where 
  \begin{align}\label{eq:auxEq42}
    &\rho^{(\varepsilon)}:=  \inf\{ t \in [0,\infty) \, : \, H^{(\varepsilon)}(L_t) = 0 \},\nonumber\\
    &\Delta^{(\varepsilon)}(t)  := 1-e^{G^{(\varepsilon)}(t)}+\int_0^t e^{(G^{(\varepsilon)}(t)-    G^{(\varepsilon)}(r))} \frac{h_1^{(\varepsilon)}(L_r)}{H^{(\varepsilon)}(L_r)} \dd r,\\
    &G^{(\varepsilon)}(t)  := \int_0^t \frac{h_1^{(\varepsilon)}(L_r)-h_0^{(\varepsilon)}(L_r)}{H^{(\varepsilon)}(L_r)} \dd r,\nonumber
  \end{align}
  for $t \in [0,\rho)$ and we note that $\rho=\rho^{(\varepsilon)}$, since $H^{(\varepsilon)}=(1-\varepsilon)H$.
  
  \smallskip

  \textbf{Some simplifications:} 
  The choice of $\delta^{(\varepsilon)}$ is very convenient as the mean and $\Delta^{(\varepsilon)}$ simplify in a neat way. By Step (ii) of the bounded case, for any $\varepsilon \in (0,1)$, $L_{\delta^{(\varepsilon)}(s)}$ has law $\phi^{(\varepsilon)}(s,x)\dd x$ under $\P^{\mu_0}$, for any $s \in [0,1]$ and by (iii),
  \begin{equation}\label{eq:auxEq53}
    \E^{\mu_0}[\delta^{(\varepsilon)}(1)] = \int_\mathbb{R}H^{(\varepsilon)}(x) \dd x = (1-\varepsilon) \int_\mathbb{R} H(x) \dd x.
  \end{equation}
 
  Next, \eqref{eq:auxEq41} and $H^{(\varepsilon)} = (1-\varepsilon) H$ imply that $G^{(\varepsilon)}=G$ and thus from \eqref{eq:auxEq42} one obtains, for any $t \in [0,\rho)$, a simple formula for $\Delta^{(\varepsilon)}$:
  \begin{equation}\begin{aligned} 
    \Delta^{(\varepsilon)}(t) & = 1-e^{G(t)}+\int_0^t e^{(G(t)-G(r))} \frac{h_1^{(\varepsilon)}(L_r)}{(1-\varepsilon)H(L_r)} \dd r \\ &\stackrel{\mathmakebox[\widthof{=}]{\eqref{eq:translatedDens}}}{=} 1-e^{G(t)}+\int_0^t e^{(G(t)-G(r))} \frac{h_1(L_r)}{H(L_r)} \dd r + \frac{\varepsilon}{(1-\varepsilon)C} e^{G(t)}\int_0^t e^{-G(r)}\dd r 
    \\ \label{eq:auxEq45} & = \Delta(t) + \frac{\varepsilon}{(1-\varepsilon)C} e^{G(t)}\int_0^t e^{-G(r)}\dd r.
  \end{aligned} \end{equation}

  \textbf{Limiting stopping time \boldmath$\delta$:} 
  Set $\Delta^{(0)}:=\Delta$ and $\delta^{(0)}:=\delta$ (from \eqref{eq:Moserdelta}). We need to show that $\delta^{(0)}$ is a stopping time and we need to compute the distribution of $L_{\delta}$.\smallskip

  Since $f\colon [0,1) \to \mathbb{R}$, $f(x):=x/(1-x)$, is increasing and $f(0)=0$ the last decomposition of $\Delta^{(\varepsilon)}$ shows that $\P^{\mu_0}$-a.s. for any $0 \leq \varepsilon < \tilde{\varepsilon} < 1$ and all $t \in [0,\rho)$, $\Delta^{(\varepsilon)}(t) \leq \Delta^{(\tilde{\varepsilon})}(t)$ and hence, from \eqref{eq:auxEq43}, $\delta^{(\varepsilon)}(s) \geq  \delta^{(\tilde{\varepsilon})}(s)$ for all $s \in [0,1]$.
  In particular, for any $s \in [0,1]$, $(\delta^{(\frac{1}{n})}(s))_{n \in \mathbb{N}}$ is a sequence of stopping times with $\delta^{(\frac{1}{n})}(s) \leq \delta^{(\frac{1}{n+1})}(s) \leq \delta(s)$ for any $n \in \mathbb{N}$. Thus $\tilde{\delta}(s):=\lim_{n \to \infty} \delta^{(\frac{1}{n})}(s) \in [0,\infty]$ exists $\P^{\mu_0}$-a.s. and $\tilde{\delta}(s) \leq \delta(s)$. Since $(\mathcal{F}_t)_{t \geq 0}$ is right-continuous, $\tilde{\delta}(s)$ is an $(\mathcal{F}_t)_{t \geq 0}$-stopping time. By \eqref{eq:auxEq53} and monotone convergence,
  \begin{equation}\label{eq:auxEq54}
    \E^{\mu_0}[\tilde{\delta}(1)] = \lim_{n \to \infty} \E^{\mu_0}[\delta^{(\frac{1}{n})}(1)] = \int_\mathbb{R} H(x) \dd x < \infty.
  \end{equation}
  In particular, $\tilde{\delta}(s)\leq \tilde{\delta}(1) < \infty$, $\P^{\mu_0}$-a.s.
  
  If $\tilde{\delta}(s) \geq \rho$, then $\delta(s) \leq \rho$ and $\tilde{\delta}(s) \leq \delta(s)$ imply $\tilde{\delta}(s) = \delta(s)$. Otherwise $\tilde{\delta}(s) < \rho$ and thus
  \begin{equation}\label{eq:auxEq46}
    \int_0^{\tilde{\delta}(s)} \frac{1}{H(L_s)} \dd s < \infty.
  \end{equation}  
  Using continuity and the decomposition~\eqref{eq:auxEq45} one obtains
  \begin{equation}\begin{aligned}\label{eq:auxEq48} 
    \Delta(\tilde{\delta}(s)) & = \lim_{n \to \infty} \Delta(\delta^{(\frac{1}{n})}(s)) = \lim_{n \to \infty}\left( \Delta^{(\frac{1}{n})}(\delta^{(\frac{1}{n})}(s)) -  \frac{e^{G(\delta^{(\frac{1}{n})}(s))}\frac{1}{n}}{(1-\frac{1}{n})C} \int_0^{\delta^{(\frac{1}{n})}(s)} e^{-G(r)}\dd r \right) \\ & \geq s, \end{aligned}
  \end{equation}
  where the last inequality follows from  $\Delta^{(\frac{1}{n})}(\delta^{(\frac{1}{n})}(s)) \geq s$ (by definition), the fact that on $\{\tilde{\delta}(s) < \rho\}$, $G$ is bounded on the compact interval $[0,\tilde{\delta}(s)]$ (which follows directly from \eqref{eq:auxEq46}) and $\delta^{(\frac{1}{n})}(s) \leq \tilde{\delta}(s)$ for all $n \in \mathbb{N}$. The definition \eqref{eq:Moserdelta} and inequality \eqref{eq:auxEq48} imply $\tilde{\delta}(s) \geq \delta(s)$ also on $\{\tilde{\delta}(s) < \rho\}$. We conclude that $\P^{\mu_0}$-a.s., $\tilde{\delta}(s) = \delta(s)$ and the sequence of stopping times $\{\delta^{(\frac{1}{n})}(s)\}_{n \in \mathbb{N}}$ increases monotonically to $\delta(s)$. Hence, $\delta^{(0)}=\delta$ is a stopping time and by quasi-left continuity of $L$, \cite[Chap.~4, Thm.~3.12]{Ethier1986a} implies
  \begin{equation*} 
    \lim_{n \to \infty} L_{\delta^{(\frac{1}{n})}(s)} = L_{\delta(s)} , \quad \P^{\mu_0}\text{-a.s.} 
  \end{equation*}
  In particular, for any $f \in C_0(\mathbb{R})$,
  \begin{equation*} 
    \E^{\mu_0}[f(L_{\delta(s)})] = \lim_{n \to  \infty} \E^{\mu_0}[f(L_{\delta^{(\frac{1}{n})}(s)} )]=  \lim_{n \to  \infty} \int_\mathbb{R} f(x) \phi^{(\frac{1}{n})}(s,x) \dd x = \int_\mathbb{R} f(x) \phi(s,x) \dd x,
  \end{equation*}
  which implies that (ii) (and (iii), as seen from \eqref{eq:auxEq54}) has been established also without the assumption that $\sigma$ is bounded.
\end{proof}

\subsection*{Acknowledgement} 
D.J.P. gratefully acknowledges financial support of the Swiss National Foundation under Grant No.~$200021\_163014$. L.G. and D.J.P acknowledge generous support from ETH Z\"urich, where a major part of this work was completed

\bibliography{quellenSkoro}

\providecommand{\bysame}{\leavevmode\hbox to3em{\hrulefill}\thinspace}
\providecommand{\MR}{\relax\ifhmode\unskip\space\fi MR }
\providecommand{\MRhref}[2]{%
  \href{http://www.ams.org/mathscinet-getitem?mr=#1}{#2}
}
\providecommand{\href}[2]{#2}
\begin{thebibliography}{CGMY04}

\bibitem[AKU16]{Ankirchner2016}
Stefan Ankirchner, Thomas Kruse, and Mikhail Urusov, \emph{{Numerical
  approximation of irregular {SDE}s via {S}korokhod embeddings}}, J. Math.
  Anal. Appl. \textbf{440} (2016), no.~2, 692--715.

\bibitem[Bas83]{Bass1983}
Richard~F. Bass, \emph{{Skorokhod imbedding via stochastic integrals}},
  {Seminar on probability, {XVII}}, {Lecture Notes in Math.}, vol. 986,
  Springer, Berlin, 1983, pp.~221--224.

\bibitem[BC74]{baxter1974}
J.~R. Baxter and R.~V. Chacon, \emph{Potentials of stopped distributions},
  Illinois J. Math. \textbf{18} (1974), no.~4, 649--656.

\bibitem[Ber96]{Bertoin1996}
Jean Bertoin, \emph{L\'evy processes}, Cambridge Tracts in Mathematics, vol.
  121, Cambridge University Press, Cambridge, 1996.

\bibitem[BL92]{Bertoin1992}
Jean Bertoin and Yves {Le Jan}, \emph{{Representation of measures by balayage
  from a regular recurrent point}}, Ann. Probab. \textbf{20} (1992), no.~1,
  538--548.

\bibitem[CGMY04]{Carr2004}
Peter Carr, H\'{e}lyette Geman, Dilip~B. Madan, and Marc Yor, \emph{{From local
  volatility to local {L}\'{e}vy models}}, Quant. Finance \textbf{4} (2004),
  no.~5, 581--588.

\bibitem[CHO11]{Cox2011}
Alexander M.~G. Cox, David Hobson, and Jan Ob\l{\'o}j, \emph{Time-homogeneous
  diffusions with a given marginal at a random time}, ESAIM Probab. Stat.
  \textbf{15} (2011), no.~In honor of Marc Yor, suppl., S11--S24.

\bibitem[DGPR18]{Nr2}
Leif D{\"o}ring, Lukas Gonon, David~J. Pr\"omel, and Oleg Reichmann,
  \emph{{Existence and uniqueness results for time-inhomogeneous time-change
  equations and Fokker-Planck equations}}, Preprint ArXiv:1812.08579 (2018).

\bibitem[Dup94]{Dupire1994}
Bruno Dupire, \emph{{Pricing with a Smile}}, Risk Magazine (1994), 18--20.

\bibitem[EHJT13]{Ekstrom2013}
Erik Ekstr{\"o}m, David Hobson, Svante Janson, and Johan Tysk, \emph{Can
  time-homogeneous diffusions produce any distribution?}, Probab. Theory
  Related Fields \textbf{155} (2013), no.~3-4, 493--520.

\bibitem[EK86]{Ethier1986a}
Stewart~N. Ethier and Thomas~G. Kurtz, \emph{{Markov processes.
  Characterization and convergence}}, John Wiley \& Sons, 1986.

\bibitem[FF91]{Falkner1991}
Neil Falkner and P.~J. Fitzsimmons, \emph{Stopping distributions for right
  processes}, Probab. Theory Related Fields \textbf{89} (1991), no.~3,
  301--318.

\bibitem[FH16]{Feng2016}
Han Feng and David Hobson, \emph{{Gambling in contests with random initial
  law}}, Ann. Appl. Probab. \textbf{26} (2016), no.~1, 186--215.

\bibitem[GMO15]{Gassiat2015}
Paul Gassiat, Aleksandar Mijatovi\'{c}, and Harald Oberhauser, \emph{{An
  integral equation for {R}oot's barrier and the generation of {B}rownian
  increments}}, Ann. Appl. Probab. \textbf{25} (2015), no.~4, 2039--2065.

\bibitem[Hob98]{Hobson1998}
David~G. Hobson, \emph{Robust hedging of the lookback option}, Finance and
  Stochastics \textbf{2} (1998), no.~4, 329--347.

\bibitem[Hob11]{Hobson2011}
David Hobson, \emph{{The {S}korokhod embedding problem and model-independent
  bounds for option prices}}, {Paris-{P}rinceton {L}ectures on {M}athematical
  {F}inance 2010}, {Lecture Notes in Math.}, vol. 2003, Springer, Berlin, 2011,
  pp.~267--318.

\bibitem[HPRY11]{Hirsch2011}
Francis Hirsch, Christophe Profeta, Bernard Roynette, and Marc Yor,
  \emph{{Peacocks and associated martingales, with explicit constructions}},
  {Bocconi \& Springer Series}, vol.~3, Springer, Milan, 2011.

\bibitem[Kyp14]{Kyprianou2014}
Andreas~E. Kyprianou, \emph{Fluctuations of {L}\'evy processes with
  applications}, second ed., Universitext, Springer, Heidelberg, 2014,
  Introductory lectures.

\bibitem[Mon72]{Monroe1972}
Itrel Monroe, \emph{{Using additive functionals to embed preassigned
  distributions in symmetric stable processes}}, Trans. Amer. Math. Soc.
  \textbf{163} (1972), 131--146.

\bibitem[Ob{\l}04]{Obloj2004}
Jan Ob{\l}\'{o}j, \emph{{The {S}korokhod embedding problem and its offspring}},
  Probab. Surv. \textbf{1} (2004), 321--390.

\bibitem[OP09]{Obloj2009}
Jan Ob{\l}\'{o}j and Martijn Pistorius, \emph{{On an explicit {S}korokhod
  embedding for spectrally negative {L}\'{e}vy processes}}, J. Theoret. Probab.
  \textbf{22} (2009), no.~2, 418--440.

\bibitem[Ros70]{Rost1970}
Hermann Rost, \emph{Die {S}toppverteilungen eines {M}arkoff-prozesses mit
  lokalendlichem {P}otential}, manuscripta mathematica \textbf{3} (1970),
  no.~4, 321--329.

\bibitem[{Ros}71]{Rost1971}
Hermann {Rost}, \emph{{The stopping distributions of a Markov process}},
  {Invent. Math.} \textbf{14} (1971), 1--16.

\bibitem[Rud87]{Rudin1987}
Walter Rudin, \emph{{Real and complex analysis}}, third ed., McGraw-Hill Book
  Co., New York, 1987.

\bibitem[RW00a]{Rogers2000v2}
L.~C.~G. Rogers and David Williams, \emph{{Diffusions, Markov Processes, and
  Martingales}}, second ed., vol.~2, Cambridge University Press, 2000.

\bibitem[RW00b]{Rogers2000}
\bysame, \emph{{Diffusions, Markov Processes, and Martingales}}, second ed.,
  vol.~1, Cambridge University Press, 2000.

\bibitem[Sat72]{Sato1972}
Ken-Iti Sato, \emph{{Potential operators for Markov processes}}, {Proceedings
  of the Sixth Berkeley Symposium on Mathematical Statistics and Probability,
  Volume 3: Probability Theory} (Berkeley, Calif.), University of California
  Press, 1972, pp.~193--211.

\bibitem[Sat99]{Sato1999}
\bysame, \emph{{L\'{e}vy processes and infinitely divisible distributions}},
  Cambridge University Press, 1999.

\bibitem[Sko61]{Skorokhod1961}
A.V. Skorohod, \emph{{Issledovanija po teorii slu\v{c}ajnyh processov:
  (stohasti\v{c}askie differencial'nye uravnenija i predel'nye teoremy dlja
  processov Markova)}}, Izdatel'stvo Kievskogo universiteta, 1961 (Russian).

\bibitem[Sko65]{Skorokhod1965}
A.~V. Skorokhod, \emph{{Studies in the theory of random processes}},
  {Translated from the Russian by Scripta Technica, Inc}, Addison-Wesley
  Publishing Co., Inc., Reading, Mass., 1965.

\bibitem[SS13]{Seel2013}
Christian Seel and Philipp Strack, \emph{{Gambling in contests}}, Journal of
  Economic Theory \textbf{148} (2013), no.~5, 2033--2048.

\end{thebibliography}
\bibliographystyle{amsalpha}

\end{document}